\documentclass[a4paper]{amsart}

\usepackage{amsmath,amssymb,amsthm,amsfonts}
\usepackage{mathtools}
\usepackage[UKenglish]{babel}
\usepackage{microtype}
\usepackage[pdftex]{color}
\usepackage[bookmarks=true,hyperindex,pdftex,colorlinks, citecolor=MidnightBlue,linkcolor=MidnightBlue, urlcolor=MidnightBlue]{hyperref}
\usepackage[dvipsnames]{xcolor}

\usepackage[margin=2.5cm]{geometry}
\parskip=1ex

\usepackage{bbm}

\newcommand{\R}{\mathbb{R}}
\newcommand{\T}{\mathbb{T}}
\newcommand{\M}{\mathfrak{M}}

\DeclareMathOperator{\diam}{diam\,}
\DeclareMathOperator{\co}{co}
\DeclareMathOperator{\e}{e}

\renewcommand{\geq}{\geqslant}
\renewcommand{\leq}{\leqslant}

\newcommand{\restricted}{\mathord{\upharpoonright}}
\newcommand{\norm}[1]{\left\Vert#1\right\Vert}
\newcommand{\Free}{{\mathcal F}}
\newcommand{\lip}{{\mathrm{lip}}_0}

\newcommand{\Lip}{{\mathrm{Lip}}_0}

\newcommand{\SA}{\operatorname{SNA}}
\newcommand{\NA}{\operatorname{NA}}
\newcommand{\F}[1]{\mathcal{F}(#1)}

\newcommand{\cco}{\overline{\operatorname{co}}}
\newcommand{\ext}[1]{\operatorname{ext}\left(#1\right)}
\newcommand{\strexp}[1]{\operatorname{str-exp}\left(#1\right)}
\newcommand{\preext}[1]{\operatorname{pre-ext}\left(#1\right)}
\newcommand{\dent}[1]{\operatorname{dent}\left(#1\right)}
\newcommand{\Mol}[1]{\operatorname{Mol}\left(#1\right)}

\newtheorem{theorem}{Theorem}[section]
\newtheorem{proposition}[theorem]{Proposition}
\newtheorem{corollary}[theorem]{Corollary}
\newtheorem{lemma}[theorem]{Lemma}
\newtheorem{fact}[theorem]{Fact}
\theoremstyle{definition}
\newtheorem{definition}[theorem]{Definition}
\newtheorem{example}[theorem]{Example}
\newtheorem{remark}[theorem]{Remark}

\title[Examples and applications of the density of strongly norm attaining Lipschitz maps]{Examples and applications of the density of strongly norm attaining Lipschitz maps}

\dedicatory{Dedicated to Jean Bourgain, \emph{in memoriam}}

\date{August 27th, 2019. Revised January 16th, 2020}

\author[Chiclana]{Rafael Chiclana}
\author[Garc\'ia-Lirola]{Luis C.\ Garc\'ia-Lirola}
\address[Garc\'ia-Lirola]{Department of Mathematical Sciences, Kent State University, Kent OH 44242, USA}
\email{lgarcial@kent.edu}
\urladdr[(Garc\'{\i}a-Lirola)]{\url{http://research.kent.edu/~lgarcial/}}

\author[Mart\'in]{Miguel Mart\'in}
\author[Rueda Zoca]{Abraham Rueda Zoca}
\address[Chiclana, Mart\'in, Rueda Zoca]{Universidad de Granada, Facultad de Ciencias.
Departamento de An\'{a}lisis Matem\'{a}tico, 18071-Granada
(Spain)}
\email{rchiclana@ugr.es; mmartins@ugr.es; abrahamrueda@ugr.es}
\urladdr[(Mart\'in)]{\url{http://www.ugr.es/~mmartins/}}
\urladdr[(Rueda Zoca)]{\url{https://arzenglish.wordpress.com}}

\thanks{The research of L.\ Garc\'ia-Lirola was supported by the grants Spanish MTM2017-83262-C2-2-P and Fundaci\'on S\'eneca CARM 19368/PI/14, and also by a postdoctoral grant in the framework of \emph{Programa
Regional de Talento Investigador y su Empleabilidad} from \emph{Fundaci\'on S\'eneca - Agencia de Ciencia y Tecnolog\'ia de la Regi\'on de Murcia}. The research of R.\ Chiclana and M.\ Mart\'{\i}n was supported by projects PGC2018-093794-B-I00 (MCIU/AEI/FEDER, UE) and FQM-185 (Junta de Andaluc\'{\i}a/FEDER, UE). The research of Abraham Rueda Zoca was supported by Vicerrectorado de Investigaci\'on y Transferencia de la Universidad de Granada in the program ``Contratos puente'', by MICINN (Spain) Grant
PGC2018-093794-B-I00 (MCIU, AEI, FEDER, UE), by Junta de Andaluc\'ia
Grant A-FQM-484-UGR18 and by Junta de Andaluc\'ia Grant FQM-0185.}

\begin{document}

\begin{abstract}
We study the density of the set $\SA(M,Y)$ of those Lipschitz maps from a (complete pointed) metric space $M$ to a Banach space $Y$ which strongly attain their norm (i.e.\ the supremum defining the Lipschitz norm is actually a maximum). We present new and somehow counterintuitive examples, and we give some applications. First, we show that $\SA(\mathbb T,Y)$ is not dense in $\Lip(\mathbb T,Y)$ for any Banach space $Y$, where $\mathbb T$ denotes the unit circle in the Euclidean plane. This provides the first example of a Gromov concave metric space (i.e.\ every molecule is a strongly exposed point of the unit ball of the Lipschitz-free space) for which the density does not hold. Next, we construct metric spaces $M$ satisfying that $\SA(M,Y)$ is dense in $\Lip(M,Y)$ regardless $Y$ but which contain isometric copies of $[0,1]$ and so the Lipschitz-free space $\mathcal F(M)$ fails the Radon--Nikod\'{y}m property, answering in the negative a question posed in \cite{ccgmr}. Furthermore, an example of such $M$ can be produced failing all the previously known sufficient conditions for the density of strongly norm attaining Lipschitz maps. Finally, among other applications, we prove that given a compact metric space $M$ which does not contain any isometric copy of $[0,1]$ and a Banach space $Y$, if $\SA(M,Y)$ is dense, then $\SA(M,Y)$ actually contains an open dense subset and $B_{\mathcal F(M)}=\overline{\co}(\strexp{B_{\mathcal F(M)}})$. Further, we show that if $M$ is a boundedly compact metric space for which $\SA(M,\R)$ is dense in $\Lip(M,\R)$, then the unit ball of the Lipschitz-free space on $M$ is the closed convex hull of its strongly exposed points.
\end{abstract}

\keywords{Lipschitz function; Lipschitz map; Lipschitz-free space; norm attaining operators; metric space}
	
\subjclass[2020]{Primary 46B04; Secondary 46B20, 46B22, 54E50}

\maketitle

\section{Introduction}

In this paper we will solve some questions related to when the set of those Lipschitz maps which strongly attain their norm is dense. Let us start with necessary definitions. A \emph{pointed metric space} is just a metric space $M$ in which we distinguish an element, called $0$. Throughout the paper, the metric spaces will be complete and the Banach spaces will be over the real scalars. Given a pointed metric space $M$ and a Banach space $Y$, we write $\Lip(M,Y)$ to denote the Banach space of all Lipschitz maps $F:M\longrightarrow Y$ which vanish at $0$, endowed with the Lipschitz norm defined by
\begin{equation}\tag{\ensuremath{\spadesuit}}\label{eq:def-norma-Lipschitz}
 \| F \|_L := \sup\left\{\frac{\|F(x)-F(y)\|}{d(x,y)} \colon x,y\in M,\, x \neq y \right\}.
\end{equation}
Let us comment that the choice of the distinguished element is not important, as the resulting spaces of Lipschitz maps are isometrically isomorphic. Following \cite{kms} and \cite{Godefroy-survey-2015}, we say that $F\in \Lip(M,Y)$ \emph{attains its norm in the strong sense} or \emph{strongly attains its norm}, whenever the supremum in \eqref{eq:def-norma-Lipschitz} is actually a maximum, that is, whenever there are $x,y\in M$, $x\neq y$, such that
$$
\frac{\|F(x)-F(y)\|}{d(x,y)}=\|F\|_L.
$$
The set of all Lipschitz maps in $\Lip(M,Y)$ which attain their norm in the strong sense is denoted by $\SA(M,Y)$.

As the starting point of the study of strong norm attainment we can consider the papers \cite{Godefroy-survey-2015,kms}, where the first examples of spaces failing and satisfying that the set of strongly norm attaining functionals is dense are found. On the one hand, the first negative example is \cite[Example 2.1]{kms}, where a norm-one Lipschitz function $f\in \Lip([0,1],\mathbb R)$ is found so that $d(f,\SA([0,1],\R))>0$ (see \cite[p.~109]{Godefroy-survey-2015} for a quantitative sharper result). On the other hand, the first positive result comes from \cite{Godefroy-survey-2015}, where it is proved that $\SA(M,Y)$ is dense in $\Lip(M,Y)$ whenever $M$ is a compact metric space such that the space of little Lipschitz functions uniformly separates points and $Y$ is a finite-dimensional Banach space, leaving as an open question to determine those compact metric spaces $M$ and those Banach spaces $Y$ for which $\SA(M,Y)$ is dense in $\Lip(M,Y)$ \cite[Question 6.7]{Godefroy-survey-2015}.

After that, new positive results were obtained in \cite[Section 4]{lpr} and in \cite[Section 7]{lppr}, where the result \cite[Proposition 7.4]{lppr} is of particular interest. In order to state this result, we need to introduce the Lipschitz-free space. Let $M$ be a pointed metric space. We denote by $\delta$ the canonical isometric embedding of $M$ into $\Lip(M,\R)^*$, which is given by $\langle f, \delta(x) \rangle =f(x)$ for $x \in M$ and $f \in \Lip(M,\R)$. We denote by $\mathcal{F}(M)$ the norm-closed linear span of $\delta(M)$ in the dual space $\Lip(M,\R)^*$, which is usually called the \emph{Lipschitz-free space over $M$}, see the papers \cite{Godefroy-survey-2015} and \cite{gk}, and the book \cite{wea5} (where it receives the name of Arens-Eells space) for background on this. It is well known that $\mathcal{F}(M)$ is an isometric predual of the space $\Lip(M,\R)$ \cite[p.~91]{Godefroy-survey-2015}. With this notion in mind, then \cite[Proposition 7.4]{lppr} asserts that $\SA(M,Y)$ is dense in $\Lip(M,Y)$ for every Banach space $Y$ whenever $\mathcal F(M)$  has the Radon--Nikod\'{y}m property (RNP in short). Later on, taking advantage of the recent progress on the study of the extremal structure of the unit ball of Lipschitz-free spaces \cite{ag,lppr,gprdauga}, an intensive study of strongly norm attaining Lipschitz maps was done in \cite{ccgmr}. Also, a Bishop--Phelps--Bollob\'{a}s type property for Lipschitz maps (this is a stronger quantitative way in which the set of strongly norm attaining Lipschitz maps can be dense) is considered in \cite{cm}. Let us summarise these and some other known results on the density of strongly norm attaining Lipschitz maps; see subsection~\ref{subsec:notation} to find the needed definitions.
\begin{enumerate}
\item[(N1)] If $M$ is a length metric space (in particular, if it is geodesic), then $\SA(M,\mathbb R)$ is not dense in $\Lip(M,\mathbb R)$ \cite[Theorem 2.2]{ccgmr}.
\item[(N2)] If $M$ is a closed subset of an $\R$-tree (in particular, a subset of $\mathbb R$) with positive length measure, then $\SA(M,\mathbb R)$ is not dense in $\Lip(M,\mathbb R)$  \cite[Theorem 2.3]{ccgmr}.
\item[(P1)] If $\Free(M)$ has the RNP, then $\SA(M,Y)$ is dense in $\Lip(M,Y)$ for every Banach space $Y$ \cite[Proposition 7.4]{lppr}.
\item[(P2)] If $M$ satisfies that $B_{\mathcal F(M)}$ is the closed convex hull of a uniformly strongly exposed set, then $\SA(M,Y)$ is dense in $\Lip(M,Y)$ for every Banach space $Y$ \cite[Proposition 3.3]{ccgmr}. In particular, this happens in the following situations:
    \begin{enumerate}
    \item[(P2.1)] if $\mathcal F(M)$ has property $\alpha$ \cite[Corollary 3.10]{ccgmr},
    \item[(P2.2)]\label{equa:Holder} if $M$ is any H\"older space \cite[Proposition 3.8]{cm}.
    \end{enumerate}
\item[(P3)] If $\mathcal F(M)$ has property quasi-$\alpha$, then $\SA(M,Y)$ is dense in $\Lip(M,Y)$ for every Banach space $Y$ \cite[Proposition 3.19]{ccgmr}.
\end{enumerate}

Let us comment on some properties that all these examples have in common. In all the negative results, the space $\Free(M)$ contains isomorphic copies of $L_1[0,1]$. In all the positive results, the unit ball of $\Free(M)$ is the closed convex hull of its strongly exposed points and, actually, it is not known whether in all positive known results, $\F{M}$ has the RNP. In view of all these, the following questions are natural on a complete metric space $M$:
\begin{enumerate}
\item[(Q1)] Does $\mathcal F(M)$ have the RNP if $\SA(M, Y)$ is dense in $\Lip(M, Y)$ for every Banach space $Y$?
\item[(Q2)]  Is $\SA(M,\mathbb R)$ dense in $\Lip(M,\mathbb R)$ if $B_{\mathcal F(M)}=\overline{\co}\bigl(\strexp{B_{\mathcal F(M)}}\bigr)$ (or, even, if $M$ is Gromov concave)?
\item[(Q3)] Conversely, is $B_{\mathcal F(M)}=\overline{\co}\bigl(\strexp{B_{\mathcal F(M)}}\bigr)$ when $\SA(M,\mathbb R)$ is dense in $\Lip(M,\mathbb R)$?
\item[(Q4)] Does $\SA(M,\R)$ fail to be dense in $\Lip(M,\R)$ provided $\mathcal F(M)$ contains an isomorphic (or even isometric) copy of $L_1[0,1]$?
\end{enumerate}

Note that all the questions above have positive answer if $M$ is a compact subset of $\mathbb R$: see \cite[Theorem~3.2]{godard} for a description of $\mathcal F(M)$ as an $L_1$ space and \cite[Corollary 2.6]{ccgmr} for description of those compact subsets of $\R$ for which strongly norm attaining Lipschitz maps are dense.

Let us also notice that (Q1) was asked in \cite[p.~29]{ccgmr}. Also, in the case that $M$ is compact, it is asked in \cite[p.~115]{Godefroy-survey-2015} whether $\mathcal F(M)$ is isometric to the dual of the space of little Lipschitz functions when $\SA(M,\R)$ is dense in $\Lip(M,\R)$. Outside the compact setting, it is known that there are metric spaces $M$ for which $\F{M}$ has the RNP but it is not isometric to any dual space \cite[Example 5.8]{lppr}.

A natural candidate to study is the unit circle $\mathbb T$, as $\mathcal F(\mathbb T)$ does not have the RNP (it contains an isomorphic copy of $L_1[0,1]$) but the curvature of $\T$ suggests that $\mathcal F(\mathbb T)$ should contain a lot of strongly exposed points, according to the characterisation of these points given in \cite[Theorem 5.4]{gprdauga}. Then, depending on whether $\SA(\T,\R)$ is dense in $\Lip(\T,\R)$ or not, it would provide a negative answer to either (Q1) or (Q2). One of the main results of the paper is Theorem \ref{theo:toro}, where we prove that $\SA(\mathbb T,\mathbb R)$ is not dense in $\Lip(\mathbb T,\mathbb R)$, while every molecule (see Subsection \ref{subsec:notation} for the definition) is a strongly exposed point, so $B_{\F{\T}}=\overline{\co}\bigl(\strexp{B_{\mathcal F(\T)}}\bigr)$, which provides a negative answer for (Q2). Let us observe that this example is somehow counterintuitive, as all the previously known negative examples either live in the real line or have some kind of convexity (that is, either are contained in a segment or contain many ``almost'' segments).

For a counterexample for (Q1), we find in Theorem \ref{theo:lluvia} two metric spaces for which strongly norm attaining maps are dense regardless of the range space, while each of them contains an isometric copy of $[0,1]$. This proves that the answer to both (Q1) and (Q4) is negative. In particular, this result seems to provide the first example of a compact metric space $M$ for which $\SA(M,Y)$ is dense in $\Lip(M,Y)$ and whose space of little Lipschitz functions does not separate the points of $M$, answering by the negative the already mentioned question of G.~Godefroy (see the paragraph following \cite[Question 6.7]{Godefroy-survey-2015}).
Again, this example is somehow counterintuitive, as it implies that every function in $\Lip([0,1],\R)$ which is far from $\SA([0,1],\R)$ (and there are many of them) admits extensions to a function in $\Lip(M,\R)$ which can be approximated by strongly norm attaining ones. Furthermore, as another consequence of Theorem \ref{theo:lluvia}, we find in Example~\ref{example:frankenstein} a metric space $M$ for which $\SA(M,Y)$ is dense in $\Lip(M,Y)$ regardless of $Y$, but failing all the sufficient conditions studied in \cite{ccgmr,cm,Godefroy-survey-2015}, which makes of $M$ an example of a metric space in which the density is obtained differently from all the previously known ways.

With respect to (Q3), we begin Section \ref{section:abiden} by showing in Theorem \ref{theorem:closedconvexhullextremepoints} that, given a complete metric space $M$, if $\SA(M,\mathbb R)$ is dense in $\Lip(M,\mathbb R)$, then $$B_{\mathcal F(M)}=\overline{\co}\left(\ext{B_{\mathcal F(M)}}\right).$$ A complete affirmative answer to (Q3) in the compact case is obtained in Theorem~\ref{theorem:clcvstrepcompact} and extended to the boundedly compact case in Corollary \ref{coro:clcvstrepbouncompact}.

Furthermore, in the case that $M$ is a compact metric space which does not contain any isometric copy of $[0,1]$, we even obtain in Theorem~\ref{theo:centraestruct} that $\SA(M,Y)$ actually contains an open dense subset: the one of non-local Lipschitz maps (which actually attain their norms at strongly exposed points, see Lemma \ref{lemma:nolocalstrexp}).
In general, given Banach spaces $X$ and $Y$, the presence of open subsets in the set of norm attaining operators from $X$ to $Y$ is a rare phenomenon (see Remark \ref{remark:abidensoraro}). However, an adaptation of the techniques of Theorem~\ref{theo:centraestruct} allows us to prove in Proposition \ref{prop:denseopen-locallycompact} that if $M$ is a locally compact metric space not containing any isometric copy of $[0,1]$, then the set of absolutely strongly exposing operators from $\mathcal F(M)$ to $Y$ is dense in $\mathcal{L}(\mathcal F(M),Y)$ if, and only if, $\SA(M,Y)$ contains a certain open dense subset $B$ (see Proposition \ref{prop:denseopen-locallycompact} for a description of such set). Notice that, in general, in order to ensure that a certain set of norm attaining operators $\NA(X,Y)$ contains an open subset it is not sufficient that $X$ has the RNP (see Remark \ref{remark:abidensoraro}). However, combining Theorem \ref{theo:centraestruct} and Proposition \ref{prop:denseopen-locallycompact} we obtain that $\SA(M,Y)$ (and henceforth $\NA(\mathcal F(M),Y)$) contains an open dense subset in the classical examples in which $\mathcal F(M)$ is known to enjoy the RNP as in the case when $M$ is uniformly discrete or for the class of compact metric spaces described in \cite[p.~110]{Godefroy-survey-2015}.

Finally, as a by-product of our study, we prove that for all the known sufficient conditions for Lindenstrauss property A, one actually obtains that the absolutely strongly exposing operators form a dense subset (see Section \ref{sect:byproduct}).

\subsection{Notation and a little background}\label{subsec:notation}
We will only consider real Banach spaces. Given a Banach space $X$ we will denote by $B_X$ and $S_X$ the closed unit ball and the closed unit sphere. Also, $X^*$ stands for the topological dual of $X$ and $J_X\colon X\longrightarrow X^{**}$ is the canonical inclusion. A \emph{slice} of the unit ball $B_X$ is a non-empty intersection of an open half-space with $B_X$; every slice can be written in the form \[
S(B_X,f,\beta):=\{x\in B_X \colon f(x)>1-\beta\},
\]
where $f \in S_{X^*}$, $\beta>0$.

The notations $\ext{B_X}$, $\preext{B_X}$, $\strexp{B_X}$ stand for the set of extreme points, preserved extreme points (i.e.\ extreme points which remain extreme in the bidual ball), and strongly exposed points of $B_X$, respectively. A point $x \in B_X$ is said to be a \emph{denting point} of $B_X$ if there exist slices of $B_X$ containing $x$ of arbitrarily small diameter. We will denote by $\dent{B_X}$ the set of denting points of $B_X$. We always have that
$$
\strexp{B_X}\subset \dent{B_X} \subset \preext{B_X} \subset \ext{B_X}.
$$

If $X$ and $Y$ are Banach spaces, we write $\mathcal{L}(X,Y)$ to denote the Banach space of all bounded linear operators from $X$ to $Y$, endowed with the operator norm. We say that $T\in \mathcal{L}(X,Y)$ \emph{attains its norm}, and write $T\in \NA(X,Y)$, if there is $x\in X$ with $\|x\|=1$ such that $\|Tx\|=\|T\|$. The study of the density of norm attaining linear operators has its root in the classical Bishop--Phelps theorem, which states that $\NA(X,\R)$ is dense in $X^*=\mathcal{L}(X,\R)$ for every Banach space $X$. J.~Lindenstrauss extended such study to general linear operators, showing that this is not always possible, and also giving positive results. A Banach space $X$ is said to have \emph{Lindenstrauss property A} when $\overline{\NA(X,Y)}=\mathcal{L}(X,Y)$ for every Banach space $Y$; it is shown in \cite{linds63} that reflexive spaces have this property. This result was extended by J.~Bourgain \cite{bou77} to Banach spaces with the RNP. In order to be more precise, we will introduce a bit of notation. According to \cite{bou77}, given two Banach spaces $X$ and $Y$, an operator $T\colon X\longrightarrow Y$ is \emph{absolutely strongly exposing} if there exists $x \in S_{X}$ such that for every sequence $\{x_n\} \subset B_X$ such that $\lim_{n} \|Tx_n\| = \|T\|$, there is a subsequence $\{x_{n_k}\}$ which converges to either $x$ or $-x$. Clearly, if $T$ is an absolutely strongly exposing operator, then $T$ attains its norm at the point $x$ appearing at the definition and it is easy to show that such point $x\in S_X$ is a strongly exposed point (see Proposition~\ref{prop:strongoperator} for details). The announced result of J.~Bourgain \cite[Theorem~5]{bou77} says that if $X$ is a Banach space with the RNP and $Y$ is any Banach space, then the set of absolutely strongly exposing operators from $X$ to $Y$ is a $G_\delta$-dense subset of $\mathcal{L}(X,Y)$.

In order to connect the theory of norm attaining operators and the theory of strong norm attainment of Lipschitz maps, let us recall that when $M$ is a pointed metric space and $Y$ is a Banach space, it is well known that every Lipschitz map $f \colon M \longrightarrow Y$ can be isometrically identified with the continuous linear operator $\hat{f} \colon \mathcal{F}(M) \longrightarrow Y$ defined by $\hat{f}(\delta_p)=f(p)$ for every $p \in M$. This mapping completely identifies the spaces $\Lip(M,Y)$ and $\mathcal{L}(\mathcal{F}(M),Y)$. Bearing this fact in mind, the set $\SA(M,Y)$ is identified with the set of those elements of $\mathcal{L}(\mathcal{F}(M),Y)$ which attain their operator norm at some \emph{molecule}, that is, at an element of $\mathcal{F}(M)$ of the form
\[
m_{x,y}:=\frac{\delta(x)-\delta(y)}{d(x,y)}
\]
for $x, y \in M$, $x \neq y$. We write $\Mol{M}$ to denote the set of all molecules of $M$. Note that, since $\Mol{M}$ is balanced and norming for $\Lip(M,\mathbb{R})$, a straightforward application of Hahn-Banach theorem implies that
$$
\overline{\co}(\Mol{M})=B_{\mathcal F(M)}.
$$
It is clear now that when $\SA(M,Y)$ is dense in $\Lip(M,Y)$, then $\NA(\mathcal{F}(M),Y)$ has to be dense in $\mathcal{L}(\mathcal{F}(M),Y)$ a fortiori. The converse result is not true as, for instance, $\NA(\mathcal{F}(M),\R)$ is always dense by the Bishop-Phelps theorem but, as we have already mentioned, there are many metric spaces $M$ such that $\SA(M,\R)$ is not dense in $\Lip(M,\R)$. Of course, if $\SA(M,Y)$ is dense in $\Lip(M,Y)$ for every Banach space $Y$, then $\mathcal{F}(M)$ has Lindenstrauss property A. Let us recall the exact definition of some sufficient conditions to get that $\SA(M,Y)$ is dense in $\Lip(M,Y)$ for every $Y$, considered in \cite{ccgmr}, which have been commented in the introduction.

\begin{definition}\label{def:suff-conditions} Let $X$ be a Banach space.
\begin{enumerate}
\item A subset $S\subset S_X$ is said to be a \emph{uniformly strongly exposed set} (or a \emph{set of uniformly strongly exposed points}) \cite{linds63} if there is a family of functionals $\{h_x\}_{x\in S}$ with $\|h_x\|=h_x(x)=1$ for every $x\in S$ such that, given $\varepsilon>0$ there is $\delta>0$ satisfying that
\[
\sup_{x\in S} \diam(S(B_X,h_x, \delta)) \leq \varepsilon;
\]
equivalently, if for every $\varepsilon>0$ there is $\delta'>0$ such that whenever $z\in B_X$ satisfies $h_x(z)>1-\delta'$ for some $x\in S$, then $\|x-z\|<\varepsilon$ (that is, all elements of $S$ are strongly exposed points with the same relation $\varepsilon$--$\delta$).
\item $X$ has \emph{property $\alpha$} \cite{schachermayer} if there exist a balanced subset $\{x_\lambda\}_{\lambda \in \Lambda}$ of $X$ and a subset $\{x^*_\lambda\}_{\lambda \in \Lambda} \subseteq X^*$ such that
	\begin{enumerate}
		\item[(i)] $\lVert x_\lambda \rVert = \lVert x^*_\lambda\rVert = \lvert x^*_\lambda(x_\lambda)\rvert =1$ for all $\lambda\in \Lambda$.
		\item[(ii)] There exists $0\leq \rho <1$ such that
		\[ |x^*_\lambda(x_\mu)|\leq \rho \quad \forall \, x_\lambda \neq \pm x_\mu. \]
		\item[(iii)] $\overline{\co}\left(\{x_\lambda\}_{\lambda \in \Lambda}\right)= B_X$.
	\end{enumerate}	
\item $X$ has \emph{property quasi-$\alpha$} \cite{ChoiSong} if there exist a balanced subset $A:=\{x_\lambda\colon \lambda \in \Lambda\}$ of $X$, a subset $\{x_\lambda^*\colon \lambda \in \Lambda\}\subseteq X^*$, and $\rho \colon \Lambda \longrightarrow \mathbb{R}$ such that
	\begin{itemize}
		\item[(i)] $\| x_\lambda \|= \| x^*_\lambda\| =| x^*_\lambda (x_\lambda)|=1 $ for all $\lambda \in \Lambda$.
		\item[(ii)] $|x^*_\lambda(x_\mu)| \leq \rho(\lambda)<1$ for all $x_\lambda \neq \pm x_\mu$.
		\item[(iii)] For every $e \in \ext{B_{X^{**}}}$, there exists a subset $A_e \subseteq A$ such that either $e$ or $-e$ belong to $\overline{J_X(A_e)}^{\,\omega^*}$ and $r_e=\sup\{\rho(\mu)\colon x_\mu \in A_e\}<1$.
	\end{itemize}
\end{enumerate}
\end{definition}

It follows that, in the definition of property quasi-$\alpha$ (and henceforth in property $\alpha$), every point $x_\lambda$ is a strongly exposed point.

Given a metric space $M$, $B(x,r)$ denotes the closed ball in $M$ centered at $x \in M$ with radius $r$. The space $M$ is said to be \emph{boundedly compact} if every closed ball is compact. Given $x, y \in M$, we write $[x,y]$ to denote the \emph{metric segment} between $x$ and $y$, that is,
\[
[x,y] := \{z \in M \colon d(x,z)+d(z,y)=d(x,y)\}.
\]
Given $x,y,z\in M$, the \emph{Gromov product of $x$ and $y$ at $z$} is defined as
$$(x,y)_z:=\frac{1}{2}(d(x,z)+d(y,z)-d(x,y)).$$
Related to the definition of Gromov product is the definition of property (Z). Given $x,y\in M$ with $x\neq y$, we say that the pair $(x,y)$ has \emph{property (Z)} if, for every $\varepsilon>0$, there exists $z\in M\setminus\{x,y\}$ satisfying that
$$(x,y)_z<\varepsilon \min\{d(x,z),d(y,z)\}.$$
It is known that the pair $(x,y)$ fails property (Z) if, and only if, the molecule $m_{x,y}$ is strongly exposed \cite[Theorem 5.4]{gprdauga}.

According to \cite{cm}, a metric space $M$ is said to be \emph{Gromov concave} if, for every pair of distinct points $x,y\in M$, there exists $\varepsilon_{x,y}>0$ such that
$$(x,y)_z\geq \varepsilon_{x,y}\min\{d(x,z),d(y,z)\}$$
holds for every $z\in M\setminus\{x,y\}$. By the above paragraph, this is equivalent to the fact that all molecules are strongly exposed points of the unit ball of $\Free(M)$.

Connected with property (Z) is the concept of \emph{length} and \emph{geodesic} metric space. Given a metric space $M$, we say that \emph{$M$ is a length space} if $d(x,y)$ is equal to the infimum of the length of the rectifiable curves joining $x$ and $y$ for every pair of points $x,y\in M$. In the case that such an infimum is actually a minimum, it is said that $M$ is \textit{a geodesic space}. It is clear that every geodesic space is a length space, but Example 2.4 in \cite{ikw} shows that the converse is not true. On the other hand, length spaces have been recently considered in \cite{gprdauga}, where it is proved that a metric space $M$ is length if, and only if, $\Lip(M,\R)$ has the Daugavet property \cite[Theorem 3.5]{gprdauga}. Note by passing that for a complete metric space $M$, it is known that if $M$ is length then every pair of different points $x,y\in M$ enjoys property (Z) \cite[Proposition 2.3 and Proposition 2.8]{ikw}. The converse has been recently proved in \cite[Main Theorem]{am}. Also, it was proved in \cite[Proposition 3.4]{gprdauga} that a complete metric space $M$ is length if, and only if, every Lipschitz function of $\Lip(M,\mathbb R)$ is \emph{local}. Given a Lipschitz map $f:M\longrightarrow Y$, we say that $f$ is \emph{local} if, for every $\varepsilon>0$, there are $x,y\in M$ such that $0<d(x,y)<\varepsilon$ and $\Vert \hat{f}(m_{x,y})\Vert>\Vert f\Vert_L-\varepsilon$. This means that $f$ approximates its norm at arbitrarily close points. So, the Lipschitz functions which are farthest to the local ones are the elements of the \emph{little Lipschitz space}, $\lip(M,\R)$, that is, the space of those $f\in \Lip(M,\R)$ such that for any $\varepsilon>0$ there exists $\delta>0$ such that if $d(x, y)<\delta$, then $|f(x)-f(y)| \leqslant \varepsilon d(x, y)$. On the other hand, let us comment that since every strongly exposed point of $B_{\F{M}}$ is a molecule \cite{wea5}, it is clear that if $\hat{f}\in \F{M}^*$ is a strongly exposing functional, then $f$ is a non-local Lipschitz function by \cite[Lemma 1.3]{ccgmr}. A partial converse also holds: if $M$ is compact and $f\in \Lip(M,Y)$ is non-local, then $\hat{f}$ attains its norm at a strongly exposed molecule (see Lemma \ref{lemma:nolocalstrexp}) and, in particular, $f\in \SA(M,Y)$.

\section{The new examples}
It is well known that there is a close relation between Lindenstrauss property A and the presence of a rich extremal structure in a Banach space. For instance, it is a classical result from \cite[Theorem 2]{linds63} that, given a Banach space $X$ which admits an equivalent locally uniformly rotund renorming (in particular, a separable one), if $X$ has Lindenstrauss property A, then $B_X=\overline{\co}\bigl(\strexp{B_X}\bigr)$. From this fact, it is immediate, for instance, that if $M=[0,1]$ then $\NA(\mathcal F(M),Y)$ is not dense in $\Lip(M,Y)$ for some Banach space $Y$, since $\mathcal F(M)=L_1[0,1]$ (c.f.\ e.g.\  \cite[Example 2.1]{Godefroy-survey-2015}) and then, $B_{\F{M}}$ has no extreme points. Also, as we already commented in the introduction, it is not difficult to prove that $\SA([0,1],\mathbb R)$ is not dense in $\Lip([0,1],\mathbb R)$ because any strongly norm attaining Lipschitz function on $[0,1]$ is affine in a whole segment as soon as it attains its norm at its extreme points \cite[Lemma 2.2]{kms}.

The case of the unit sphere of the Euclidean plane $\mathbb T$ is, however, quite more delicate. On the one hand, $\mathbb T$ contains subsets which are bi-Lipschitz equivalent to a segment in $\mathbb R$ (in particular, $\mathcal F(\mathbb T)$ contains isomorphic copies of $L_1[0,1]$, so it fails the RNP), which should make it difficult for $\SA(\mathbb T,\mathbb R)$ to be dense in $\Lip(\mathbb T,\mathbb R)$. On the other hand, the curvature of $\mathbb T$ suggests an abundance of strongly exposed points in $B_{\mathcal F(\mathbb T)}$ thanks to \cite[Theorem 5.4]{gprdauga}, which could help to get density of $\SA(\mathbb T,\mathbb R)$ (for instance, such density would be obtained if $B_{\mathcal F(\mathbb T)}$ were the closed convex hull of a \emph{uniformly} strongly exposing set according to \cite[Proposition 3.3]{ccgmr}). This fact makes of $\mathbb T$ an interesting example to analyse because, if $\SA(\mathbb T,\mathbb R)$ were dense, we would get a negative answer to (Q1); if not, $\mathbb T$ would be a counterexample to question (Q2). This is what is done in the following theorem.

\begin{theorem}\label{theo:toro} Let $\mathbb T$ be the unit sphere of the Euclidean plane endowed with the inherited Euclidean metric. Then:
\begin{enumerate}
  \item[(a)] $\overline{\SA(\mathbb{T},\mathbb{R})}\neq \Lip(\mathbb{T},\mathbb{R})$,
  \item[(b)] but $\mathbb{T}$ is Gromov concave, that is, $m_{x,y}\in \strexp{B_{\Free(\mathbb T)}}$ for every pair of distinct points $x,y\in \mathbb T$.
\end{enumerate}
\end{theorem}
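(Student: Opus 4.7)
My plan is to handle the two parts independently: (b) with a planar-geometry computation, and (a) by embedding the obstruction of \cite[Example 2.1]{kms} into a short arc of $\mathbb{T}$.

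For (b), fix distinct $x,y\in\mathbb{T}$ and let $2\theta\in(0,2\pi)$ be the central angle subtended by one of the two arcs cut out by the chord $xy$; set $\theta_0:=\min(\theta,\pi-\theta)>0$. For any third point $z\in\mathbb{T}\setminus\{x,y\}$ the inscribed angle theorem says that $\gamma:=\angle xzy$, the angle of the triangle $xyz$ at $z$, equals either $\theta$ or $\pi-\theta$, depending on which arc contains $z$. Writing $a=d(x,z)$, $b=d(y,z)$, $c=d(x,y)$ and invoking the law of cosines $c^2=a^2+b^2-2ab\cos\gamma$,
\[
2(x,y)_z \;=\; a+b-c \;=\; \frac{(a+b)^2-c^2}{a+b+c} \;=\; \frac{4ab\cos^2(\gamma/2)}{a+b+c}.
\]
Combining $\cos^2(\gamma/2)\geq \sin^2(\theta_0/2)>0$ with the triangle-inequality bound $a+b+c\leq 4\max\{a,b\}$ yields $(x,y)_z\geq\tfrac12\sin^2(\theta_0/2)\,\min\{a,b\}$, which is exactly the failure of property (Z) for the pair $(x,y)$ with constant $\varepsilon_{x,y}=\tfrac12\sin^2(\theta_0/2)>0$. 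By \cite[Theorem 5.4]{gprdauga} this forces $m_{x,y}\in\strexp{B_{\Free(\mathbb{T})}}$.

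For (a), fix a small parameter $\eta>0$ and let $\iota\colon[0,1]\to A:=\iota([0,1])\subset\mathbb{T}$, $\iota(t)=(\cos(\eta t),\sin(\eta t))$, which is bi-Lipschitz with both constants lying in $[1,\eta/(2\sin(\eta/2))]$, arbitrarily close to $1$. Let $g\in\Lip([0,1],\R)$ be the bad function from \cite[Example 2.1]{kms}, normalised so that $\|g\|_L=1$, $g(0)=g(1)=0$, and $d(g,\SA([0,1],\R))\geq\delta>0$. Define $f\colon\mathbb{T}\to\R$ by $f\circ\iota=g$ on $A$ and $f\equiv 0$ on $\mathbb{T}\setminus A$; a routine estimate yields $f\in\Lip(\mathbb{T},\R)$ with $\|f\|_L=L$ strictly above $1$ (coming from the chord-vs-parameter discrepancy on $A$), together with the strict bound $|f(x)-f(y)|/|x-y|\leq 1<L$ whenever at least one of $x,y$ lies outside $A$. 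Now suppose $h\in\SA(\mathbb{T},\R)$ satisfies $\|h-f\|_L<\varepsilon$ and attains its norm at $(p,q)\in\mathbb{T}^2$. Then $\|h\|_L\in[L-\varepsilon,L+\varepsilon]$ and the strict bound for $f$ propagates to $h$ with an extra $+\varepsilon$, so choosing $\varepsilon<(L-1)/2$ forces $p,q\in A$. Writing $p=\iota(s)$, $q=\iota(t)$ and setting $H:=h\circ\iota$, one has $\|H-g\|_L\leq\varepsilon$ in $\Lip([0,1],\R)$, while
\[
\frac{H(s)-H(t)}{s-t}\;=\;\|h\|_L\cdot \frac{2\sin(\eta|s-t|/2)}{\eta|s-t|}\;\geq\;\|H\|_L\bigl(1-O(\eta^2)\bigr),
\]
so $H$ \emph{almost} attains its $[0,1]$-Lipschitz norm at $(s,t)$. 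A suitably quantitative version of the \cite[Example 2.1]{kms} obstruction then rules this out once $\eta,\varepsilon$ are small enough, contradicting $\|H-g\|_L\leq\varepsilon$.

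The main obstacle is precisely this quantitative enhancement: proving that no function within Lipschitz distance $\varepsilon$ of $g$ can realise, at some pair $(s,t)\in[0,1]^2$, a slope within $O(\varepsilon)+O(\eta^2)$ of its own Lipschitz norm. Concretely, if $H$ were such a function, then $H$ would be forced to be approximately affine with slope close to $\pm\|H\|_L$ on $[s,t]$, which via $H\approx g$ transfers to a rigidity statement about $g$ that a careful choice of $g$ (encoding a balanced distributional oscillation of its derivative) precludes. Delivering this uniformly and at a scale that beats the $O(\eta^2)$ chord discrepancy is where all the delicate bookkeeping lives; the payoff is that it converts the hard problem on $\mathbb{T}$ into the known hard problem on $[0,1]$.
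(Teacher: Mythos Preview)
Your proof of (b) is correct and in fact cleaner than the paper's. The paper uses Clarkson's inequality and the modulus of uniform convexity of $\R^2$ to bound $(x,y)_z$ from below; your inscribed-angle argument gives the same conclusion with a transparent constant $\varepsilon_{x,y}=\tfrac12\sin^2(\theta_0/2)$ and only elementary planar trigonometry. Both routes land on \cite[Theorem 5.4]{gprdauga}, so nothing is lost.

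For (a), however, there is a genuine gap, and you have correctly located it yourself: the ``suitably quantitative version of the \cite{kms} obstruction'' that you invoke is not a known fact but is precisely the content of the result. The \cite{kms} example only says that no $H$ close to $g$ can have $|H(s)-H(t)|/|s-t|$ \emph{equal} to $\|H\|_L$; you need the stronger statement that no such $H$ can have this ratio within $O(\eta^2)$ of $\|H\|_L$ at any pair. A generic fat Cantor set (as in \cite{kms}) does not give this: if the removed gap inside the component $[s,t]$ is too short relative to $|s-t|^3$, the chord--arc saving can compensate for it. The paper resolves this via a tailored Cantor construction (attributed to Nazarov) in which, at every stage, the removed middle interval of a component $I$ has length exactly $\lambda(I)^2$; the point is that this quadratic gap beats the cubic discrepancy $|s-t|-2\sin(|s-t|/2)\asymp|s-t|^3$, which is exactly the inequality needed to rule out norm attainment of $\int_0^x\chi_C$ in the chord metric. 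From there the paper perturbs to a function $h$ whose whole $\delta$-ball misses $\SA$, then extends to $\mathbb{T}$. Your outline is the right shape, but without supplying this calibrated Cantor set (or an equivalent quantitative lemma) the argument does not close. Minor point: your bi-Lipschitz constants for $\iota$ are off by a factor of $\eta$ (both are $\approx\eta$, with ratio $\eta/(2\sin(\eta/2))$), though this does not affect the strategy.
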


Prior to its proof, let us comment some interesting remarks.

\begin{remark}
\begin{enumerate}
  \item[(a)] {\slshape As far as we know, $\T$ is the first known example of a Gromov concave metric space for which strongly norm attaining functionals are not dense. This provides a negative answer to question (Q2).}
  \item[(b)] {\slshape The Banach space $\Free(\T)$ satisfies that its unit ball is the closed convex hull of the set of its strongly exposed points, but strongly exposing functionals are not dense in $\Free(\T)^*$.}\\ Indeed, the first assertion is given by (b) in the theorem above, while the second one follows from (a) and the fact that strongly exposing functionals attain their norm on strongly exposed points, and strongly exposed points of a Lipschitz-free space are molecules.
\end{enumerate}
\end{remark}

Let us now prove Theorem \ref{theo:toro}. In order to prove assertion (a), we will need the following key result, which has been suggested to us by F.~Nazarov.

\begin{lemma}\label{lema:nazarov}
Let $M=([0,1],d)$ where $d(x,y):=\vert \e^{ix}-\e^{iy}\vert=\sqrt{2(1-\cos( x-y))}$. Then, there exists a compact subset $C$ inside the open interval $]0,1[$ such that the function $f\in \Lip(M,\mathbb{R})$ defined by
$$
f(x):=\int_0^x \chi_C (t)\ dt \quad \forall \, x \in [0,1]
$$
is a norm-one Lipschitz function which does not belong to $\SA(M,\mathbb{R})$.
\end{lemma}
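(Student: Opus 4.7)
The plan is to reformulate the statement as a measure-theoretic density condition on $C$, then produce $C$ as a fat Cantor-like set with carefully tuned removal lengths at every scale.

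For $x<y$ in $[0,1]$ one has $d(x,y)=2\sin((y-x)/2)=:h(y-x)$ and $f(y)-f(x)=\lambda(C\cap[x,y])$, where $\lambda$ is Lebesgue measure. Hence
\[
\|f\|_L=\sup_{0\le x<y\le 1}\frac{\lambda(C\cap[x,y])}{h(y-x)},
\]
and $f\in\SA(M,\mathbb{R})$ iff this supremum is a maximum. Putting $\phi(t):=t-h(t)=t-2\sin(t/2)$, it is enough to exhibit a compact $C\subset(0,1)$ with $\lambda(C)>0$ such that
\begin{itemize}
\item[(i)] $\lambda(I\setminus C)>\phi(|I|)$ for every non-degenerate interval $I\subset[0,1]$;
\item[(ii)] $\sup_I \lambda(C\cap I)/h(|I|)=1$.
\end{itemize}
Condition (ii) is automatic from the Lebesgue density theorem: at any point $x_0\in C$ of density one (which exists because $\lambda(C)>0$), $\lambda(C\cap[x_0-r,x_0+r])/(2r)\to 1$, and combining this with $h(2r)/(2r)\to 1$ forces the ratio in (ii) to tend to $1$.

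The real content is (i). The crucial quantitative input is that $\phi$ is cubic near the origin, $\phi(t)=t^3/24+O(t^5)$, so $\phi(t)\le c\,t^3$ on $[0,1]$ for some $c>0$. I would build $C$ as a Cantor-type set by iterating the removal of open middle subintervals. Starting with $E_0\subset(0,1)$ a closed interval of length $\ell_0$, if $E_n$ is the disjoint union of $2^n$ closed intervals of common length $\ell_n$, form $E_{n+1}$ by removing from each component the open middle subinterval of length $\rho_n\ell_n$, and set $C:=\bigcap_n E_n$. Then $\lambda(C)=\ell_0\prod_n(1-\rho_n)>0$ provided $\sum_n\rho_n<\infty$. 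The parameters $(\rho_n)\subset(0,1)$ must moreover be calibrated so that $\rho_n/\ell_n^2\to\infty$ slowly (e.g.\ $\rho_n$ of order $4^{-n}$ with a large prefactor when $\ell_n\asymp 2^{-n}$), ensuring that at each generation the removed mass $\rho_n\ell_n$ exceeds $\phi(\ell_n)\asymp\ell_n^3$ by an unbounded factor.

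The principal obstacle is verifying (i) \emph{uniformly} over every subinterval $I\subset[0,1]$, not merely over the intervals of the construction. For arbitrary $I$, choose $n$ with $|I|$ comparable to $\ell_n$; a pigeonhole/geometric analysis of the Cantor structure should then locate inside $I$ at least one full removed middle-interval of generation $\le n+O(1)$, yielding
\[
\lambda(I\setminus C)\;\gtrsim\;\rho_n\ell_n\;\gtrsim\;\frac{\rho_n}{\ell_n^2}\,|I|^3\;\gg\;\phi(|I|),
\]
and hence the strict inequality in (i). Simultaneously keeping this uniform lower bound valid at every scale and every position, while preserving both $\sum\rho_n<\infty$ (so $\lambda(C)>0$ and the density-one points demanded by (ii) exist) and $\rho_n\to 0$ (so that the bound $\lambda(C\cap I)/h(|I|)\le 1-O(\rho_n)$ along small intervals still permits the supremum to reach $1$) is the delicate calibration on which the whole proof rests.
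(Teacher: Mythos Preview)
Your reformulation into conditions (i) and (ii) is correct, and the overall strategy---a fat Cantor set whose removed middle intervals beat the cubic defect $\phi(t)\le t^3/24$---is exactly the paper's. The Lebesgue-density argument for the lower bound $\|f\|_L\ge 1$ is a pleasant alternative to the paper's route (which uses $d(x,y)\le|x-y|$ and $\|f'\|_\infty=1$, and then gets $\|f\|_L\le 1$ a posteriori from non-attainment via compactness).

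Two points, however. First, your parenthetical calibration is inconsistent: with $\ell_n\asymp 2^{-n}$, taking $\rho_n$ of order $4^{-n}$ gives $\rho_n/\ell_n^2$ bounded, not $\to\infty$. The paper's concrete choice is to remove from each component $I$ of generation $n$ the middle interval of length $\lambda(I)^2$, i.e.\ $\rho_n=\ell_n$; this gives $\rho_n/\ell_n^2=1/\ell_n\to\infty$ while $\sum\rho_n<\infty$. Second, the paper sidesteps your ``principal obstacle'' of checking (i) for \emph{every} interval by arguing by contradiction: assume $f$ attains at $(x,y)$, first reduce to $x,y\in C$ (if $x$ lies in a gap, sliding it toward $y$ inside that gap leaves the numerator unchanged but strictly decreases $d$, contradicting attainment), then take $n$ maximal with $x,y$ in the same component of $E_n$. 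The removed middle interval $(u,v)$ at that stage has $v-u=\ell_n^2\ge(y-x)^2$; since $f(u)=f(v)$, attainment forces $d(x,y)\le d(x,u)+d(v,y)$, and the two-sided bound $t-t^3/24\le 2\sin(t/2)\le t$ turns this into $(y-x)^2\le(y-x)^3/24$, a contradiction. Your approach via (i) for all $I$ can be rescued by observing that $h$ is increasing, so (i) reduces to intervals with both endpoints in $C$---and then one is exactly in the paper's situation, with the same one-line estimate $\lambda([x,y]\setminus C)\ge \ell_n^2\ge (y-x)^2>(y-x)^3/24\ge\phi(y-x)$.
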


\begin{proof}
Consider a Cantor set $C=\bigcap_{n=0}^\infty C_n$, where $C_0=[1/4,3/4]$ and $C_{n+1}$ is obtained by removing an interval of length $\lambda(I)^2$ at the middle of each connected component $I$ of $C_{n}$. Note that $C_n$ has $2^{n}$ connected components, all of them with the same length $\frac{\lambda(C_n)}{2^n}$. By construction, $\lambda(C_n\setminus C_{n+1})=2^n \left( \frac{\lambda(C_n)}{2^n}\right)^2$. Taking into account that $\lambda(C_n)<\frac{1}{4}$ for $n\geq 1$, it follows that
\[ \lambda(C) = \frac{1}{2}- \sum_{n=0}^\infty \lambda(C_n\setminus C_{n+1}) = \frac{1}{2}-\sum_{n=0}^\infty 2^n \left(\frac{\lambda(C_n)}{2^n}\right)^2 > \frac{1}{2} - \sum_{n=0}^\infty \frac{1}{4}\frac{1}{2^n} =0.\]
Consider the Lipschitz function $f\colon ([0,1],d)\longrightarrow \mathbb R$ given by $\displaystyle f(x)=\int_0^x \chi_C (t)\, dt$ for every $x\in [0,1]$. Note that $\norm{f}_L \neq 0$ since $\lambda(C)>0$. We claim that $f$ does not attain its Lipschitz norm. Indeed, assume that there are $x,y \in [0,1]$, $x<y$, such that
\[ \norm{f}_L=\frac{|f(y)-f(x)|}{d(x,y)} = \frac{f(y)-f(x)}{d(x,y)}.\]
Clearly, $x,y\in [1/4,3/4]$.  We claim that $x, y\in C$. Indeed, assume that $x\notin C$. Then there is $0<\varepsilon<y-x$ such that $(x,x+\varepsilon)\cap C=\emptyset$. Thus, $f(x)=f(x+\varepsilon)$. Then,
\[ \norm{f}_L = \frac{f(y)-f(x)}{d(x,y)}<\frac{f(y)-f(x+\varepsilon)}{d(x+\varepsilon, y)},\]
a contradiction. So $x\in C$. Analogously, we get that $y\in C$.
Now, let $n$ be the maximum integer such that $x$ and $y$ belong to the same connected component $I$ of $C_n$. Since $x$ and $y$ do not belong to the same connected component of $C_{n+1}$, there are $u,v$ such that $(u,v)\subset C_{n}\setminus C_{n+1}$ and $|u-v|= \lambda(I)^2\geq |x-y|^2$. Note also that $(u,v)\cap C=\emptyset$ and so $f(u)=f(v)$.  We have
\begin{eqnarray*}
\norm{f}_L = \frac{f(y)-f(x)}{d(x,y)} = \frac{f(y)-f(v)+f(u)-f(x)}{d(x,y)}\leq \norm{f}_L \frac{d(y,v)+d(u,x)}{d(x,y)}
\end{eqnarray*}
and so $d(x,y)\leq d(y,v)+d(u,x)$. 
One can check that
\[ t-\frac{t^3}{24} \leq \sqrt{2(1-\cos(t))}\leq t \quad \forall t\in[0,1].\]
Thus,
\begin{align*}
(y-x)- \frac{(y-x)^3}{24} &\leq \sqrt{2(1-\cos(y-x))} = d(x,y) \leq d(y,v)+d(u,x)\\
 &= \sqrt{2(1-\cos(y-v))}+\sqrt{2(1-\cos(u-x))} \\
 &\leq y-v+u-x \leq y-x-(y-x)^2.
\end{align*}
Therefore, $(y-x)^2\leq \frac{(y-x)^3}{24}$, a contradiction. Thus, $f$ does not attain its Lipschitz norm. It remains to show that $\norm{f}_L=1$. First, note that
\[ \norm{f}_L=\sup_{x,y\in[0,1]}\frac{|f(x)-f(y)|}{d(x,y)}\geq\sup_{x,y\in[0,1]}\frac{|f(x)-f(y)|}{|x-y|}=\norm{f'}_\infty =1. \]
Now, pick a pair of sequences $\{x_n\}, \{y_n\}$ with $x_n\neq y_n$ for every $n$ and such that $\frac{f(x_n)-f(y_n)}{d( x_n,y_n)}\longrightarrow \norm{f}_L$. Observe that $d(x_n,y_n)\longrightarrow 0$. Otherwise, we could extract, by compactness, subsequences $\{x_{n_k}\}$ and $\{y_{n_k}\}$ converging to different points $x$,$y$ in $[0,1]$, so $f$ would attains its Lipschitz norm at the pair $(x,y)$, a contradiction. Consequently, $\frac{|y_n-x_n|}{d(x_n,y_n)}\longrightarrow 1$ and then
\[  \lim_{n\to\infty}\hat{f}(m_{x_n,y_n})=\lim_{n\to\infty} \frac{f(x_n)-f(y_n)}{d(x_n,y_n)}= \lim_{n\to\infty} \frac{f(x_n)-f(y_n)}{|y_n-x_n|}\frac{|y_n-x_n|}{d(x_n,y_n)}\leq 1. \]
Hence, we conclude that $\|f\|_L=1$.
\end{proof}

We are now able to show the proof of the theorem.

	\begin{proof}[Proof of Theorem \ref{theo:toro}]
(a). 		Let $A\subseteq \mathbb{T}$ be the following arc of $\mathbb{T}$:
		\[ A=\{\e^{it} \colon t \in [0,1]\}.\]
Let us first show that $\overline{\SA(A,\mathbb{R})}\neq \Lip(A,\mathbb{R})$. In order to do so, remember that $\Lip([0,1],\mathbb{R})$ is isometrically isomorphic to $L_\infty[0,1]$, where the isometry is given by the derivative operator, and observe that $\Phi \colon \Lip(A,\mathbb{R}) \longrightarrow \Lip([0,1],\mathbb{R})$ given by
		\[ [\Phi(f)](t)=f(\e^{it}) \quad \forall\, f \in \Lip(A,\mathbb{R}), \quad \forall \, t \in[0,1]\]
		defines a linear isomorphism. Consequently, a Lipschitz function $g$ will be close to $f$ if, and only if, $\Phi(g)' \in L_\infty[0,1]$ is close to $\Phi(f)'$. Furthermore, we know that there exists a constant $0<K<1$ such that
		\[ K |u-v| \leq |\e^{iu}-\e^{iv}| \leq |u-v| \quad \forall \, u, v \in [0,1].
\]
		Now, let $C$ be the set given by Lemma \ref{lema:nazarov}. We define $f \in \Lip(A,\mathbb{R})$ by
		\[ f(\e^{ix})=\int_{0}^{x} \chi_C(t)\, dt \quad \forall \,t \in [0,1].\]
	 Let us consider $0<\delta< \frac{K}{2}$ and define $h \in \Lip(A,\mathbb{R})$ such that
		\[ \Phi(h)'(x)=  \left\{
		\begin{array}{ll}
		1 & \text{ if $x \in C$}, \\
		-\delta & \text{ if $x \notin C$}. \\
		\end{array} \right. \]
		We will show that if $g \in \Lip(A,\mathbb{R})$ verifies that $\|\Phi(g)' - \Phi(h)'\|_\infty < \delta$ and $\|\Phi(g)\|_L=\|\Phi(h)\|_L=1$, then $g$ does not attain its Lipschitz norm. Firstly, note that if $\|\Phi(g)\|_L=1$ then $\|g\|_L\geq 1$ and $\|\Phi(g)'\|_\infty = 1$, and so
		\[ \Phi(g)'(x) \in  \left\{
		\begin{array}{ll}
		(1-\delta,1) & \text{ if $x \in C$}, \\
		(-2\delta,0) & \text{ if $x \notin C$}. \\
		\end{array} \right. \]
		Let us prove that $\hat{g}(m_{e^{iu},e^{iv}})<1$ for every molecule $m_{\e^{iu},\e^{iv}} \in \Mol{A}$. Let us distinguish two cases:

\noindent \emph{Case $1$:}\  $u<v$. In this case we have that
			\begin{align*}
			g(\e^{iu})-g(\e^{iv}) &= - \int_{u}^{v} \Phi(g)'(t)\, dt \leq \int_{[u,v]\cap C} \delta-1\, dt + \int_{[u,v]\setminus C} 2\delta\, dt\\
			& \leq 2\delta |u-v| < K|u-v|< |\e^{iu}-\e^{iv}|,
			\end{align*}
			so $g$ cannot attain its Lipschitz norm at the molecule $m_{\e^{iu},\e^{iv}}$.

\noindent \emph{Case $2$:}\  $u>v$. In this case we have that 			\begin{align*}
			g(\e^{iu})-g(\e^{iv})&= \int_{v}^{u} \Phi(g)'(t)\, dt \leq \int_{[v,u]\cap C} 1\, dt + \int_{[v,u]\setminus C} 0\, dt\\
			&= \int_{v}^{u} \chi_C(t)\, dt = \int_{v}^{u} \Phi(f)'(t)\, dt\\ &= f(\e^{iu})-f(\e^{iv})<\|f\|_L|\e^{iu}-\e^{iv}|=|\e^{iu}-\e^{iv}|,
			\end{align*}
			since $f$ does not attain its Lipschitz norm by Lemma \ref{lema:nazarov}. Consequently $\|g\|_L=1$ and $g$ does not attain its Lipschitz norm at the molecule $m_{\e^{iu},\e^{iv}}$. By the arbitrariness of $u$ and $v$, we get that $g\notin \SA(A,\mathbb R)$.

		Consequently, $h\notin \overline{\SA(A,\mathbb R)}$. Now let us consider an extension of $h$, say $\varphi$, satisfying that $\varphi\notin \overline{\SA(\mathbb T,\mathbb R)}$. In order to do so, pick $0<\eta<1$ and define
		$$\varphi(\e^{ix}):=\begin{cases}
		    h(\e^{ix}) & x\in [0,1], \\
		    h(\e^{i}) & x\in [1,1+\eta],\\
		    -\frac{x}{2}+h(\e^{i})+\frac{1+\eta}{2} & x\in [1+\eta, 2h(\e^{i})+1+\eta],\\
		    0 & x\in [2h(\e^{i})+1+\eta,2\pi].
		\end{cases}
$$
		It is clear from the definition that $\varphi\in \Lip(\mathbb T,\mathbb{R})$ with $\Vert \varphi\Vert_L=\Vert h\Vert_L=1$ and satisfies that, for every sequence of molecules $\{m_{e^{it_n},e^{is_n}}\}$ such that $\hat{\varphi}(m_{e^{it_n},e^{is_n}})\longrightarrow 1$  there exists a natural number $m$ such that $t_n,s_n\in ]0,1[$ for all
		$n\geq m$. From that fact and the fact that $h\notin \overline{\SA(A,\mathbb R)}$ it follows immediately that $\varphi\notin \overline{\SA(\mathbb T,\mathbb R)}$, as desired.
 		
(b). We have to check that $m_{x,y} \in \strexp{B_{\mathcal F(\mathbb T)}}$ for every $x,y\in \mathbb T$. 	Clearly, we may assume that $y=1$ and $x=\e^{it}$ with $t\in(0, \pi]$ as, clearly, isometries of $\T$ can be used to carry strongly exposed molecules to strongly exposed molecules using the characterization \cite[Theorem 5.4]{gprdauga}. Let us define the continuous function $\phi\colon [-\pi+t/2,t/2]\setminus \{0\} \longrightarrow \mathbb{R}$ given by
	\[ \phi(s) := \frac{1}{8}\left| \frac{\e^{is}-1}{|\e^{is}-1|}-\frac{\e^{it}-1}{|\e^{it}-1|}\right|^2.\]
	A simple calculation shows that $\varepsilon := \inf\bigl\{ \phi(s) \colon s \in [-\pi+t/2,t/2]\setminus \{0\}\bigr\}>0$. We claim that
	\[ (x,y)_z \geq \varepsilon \min\{d(x,z), d(y,z)\} \]
	for every $z\in \mathbb T\setminus\{x,y\}$ and so the pair $(x,y)$ fails property (Z). Indeed, let $z=\e^{is}$. By symmetry, we may assume that $s\in [-\pi+t/2, t/2]\setminus\{0\}$ and so, $$\min\{d(x,z),d(y,z)\} = d(y,z).$$ Now, Clarkson's inequality \cite[Theorem 3]{Clarkson} yields that
	\[ |\e^{it}-1|\leq (1-2\delta(\alpha_1))|\e^{it}- \e^{is}|+(1-2\delta(\alpha_2))|\e^{is}-1|, \]
	where
	\[ \alpha_1 = \left|\frac{\e^{it}- \e^{is}}{|\e^{it}-\e^{is}|}-\frac{\e^{it}-1}{|\e^{it}-1|}\right|, \quad \alpha_2 = \left|\frac{\e^{is}-1}{|\e^{is}-1|}- \frac{\e^{it}-1}{|\e^{it}-1|}\right|,\]
	and $\delta(u)=1-(1-u^2/4)^{1/2}\geq u^2/8$ is the modulus of uniform convexity of $\mathbb R^2$. Thus,
	\begin{align*}
	\frac{(x,y)_z}{d(y,z)} &\geq \frac{\delta(\alpha_1)|\e^{it}-\e^{is}|+\delta(\alpha_2)|\e^{is}-1|}{|\e^{is}-1|}\\
 		    &\geq \frac{1}{8}\alpha_1^2 \frac{|\e^{it}-\e^{is}|}{|\e^{is}-1|} + \frac{1}{8}\alpha_2^2 \geq \frac{1}{8}\alpha_2^2 = \phi(s)\geq \varepsilon,
	\end{align*}
	 as desired.
		\end{proof}

\begin{remark}\label{remark:distequi[0,1]}
We do not know if there exists a distance $d'$ on $[0,1]$, equivalent to the usual one, such that $\SA(([0,1],d'),\R)$ is dense in $\Lip(([0,1],d'),\R)$. Observe that Lemma \ref{lema:nazarov} and the proof of Theorem~\ref{theo:toro} provide a concrete equivalent distance $d$ on $[0,1]$ which makes $([0,1],d)$ Gromov concave and for which  $\SA(([0,1],d),\R)$ is not dense in $\Lip(([0,1],d),\R)$. On the other hand, any H\"{o}lder distance on $[0,1]$ provides the density (as the corresponding Lipschitz-free space has the RNP, see \cite[Corollary 4.39]{wea5} for instance, see also \cite[Corollary 3.7]{cm}). But H\"{o}lder distances are not equivalent to the original ones.
\end{remark}

Note that Theorem \ref{theo:toro} proves that the answer to question (Q2) is negative. In the proof that $\SA(\mathbb T,\mathbb R)$ is not dense in $\Lip(\mathbb T,\mathbb R)$ it is essential the fact that $\mathbb T$ has a ``plenty'' of subsets which are bi-Lipschitz equivalent to intervals in $\mathbb R$. One may think that this property is enough to provide the lack of density of the set of strongly norm attaining Lipschitz maps. However, we are going to show that there are metric spaces $M$ containing copies of $[0,1]$ but which satisfy that $\SA(M,Y)$ is dense in $\Lip(M,Y)$ for every $Y$ (thanks to the fact that they contain dense discrete subsets which provide rich extremal structure to $\mathcal F(M)$).

In the following theorem we construct two metric spaces with the desired properties, proving that the density of $\SA(M,Y)$ for every Banach space $Y$ does not imply the RNP, answering a question from \cite[Section 3.4]{ccgmr} and \cite[p.~115]{Godefroy-survey-2015}, and giving a negative answer to (Q1).
	
\begin{theorem}\label{theo:lluvia}
Consider the subsets of $\R^2$ given by
\begin{align*}
A_n&=\left \{\left (\frac{k}{2^n},\frac{1}{2^n}\right ) \colon k \in \{0,\ldots,2^n\} \right \} \subseteq \mathbb{R}^2 \ \ \forall \, n \in \mathbb{N}\cup \{0\},\\
M_\infty &= \bigcup_{n=0}^\infty A_n, \quad M=M_\infty \cup ([0,1]\times\{0\}).
\end{align*}
Let $\M_p$ be the set $M$ endowed with the distance inherited from $(\R^2,\|\cdot\|_p)$ for $p=1,2$.
Then, $\SA(\M_p,Y)$ is dense in $\Lip(\M_p,Y)$ for every Banach space $Y$ and for $p=1,2$. Moreover,
\begin{enumerate}
\item[(a)] $\Free(\M_1)$ has property $\alpha$,
\item[(b)] The unit sphere of $\Free(\M_2)$ does not contain any uniformly strongly exposed set which generates the ball by closed convex hull.
\end{enumerate}
\end{theorem}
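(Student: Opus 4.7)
The plan splits into three tasks: (i) prove assertion (a), which immediately yields density of $\SA(\mathcal{M}_1, Y)$ via implication (P2.1); (ii) prove density of $\SA(\mathcal{M}_2, Y)$ separately, since (b) rules out property $\alpha$ (and therefore condition (P2)) in the Euclidean case; and (iii) prove (b) itself. For (ii), I expect to succeed via property quasi-$\alpha$, i.e.\ condition (P3), applied to a refined version of the discrete family used in (i).

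For (i), I take as the balanced family the collection of horizontal discrete molecules
\[
m^n_k := m_{(k/2^n,\,1/2^n),\,((k+1)/2^n,\,1/2^n)} \qquad (n\geq 0,\ 0\leq k < 2^n)
\]
together with their reflections. The key generation step $\overline{\co}\{\pm m^n_k\}=B_{\mathcal{F}(\mathcal{M}_1)}$ reduces to approximating the base-segment molecules $m_{(x_0,0),(y_0,0)}$: picking integers $k_1, k_2$ with $k_1/2^n\to x_0$ and $k_2/2^n\to y_0$, the telescoping identity gives the convex combination
\[
\frac{1}{k_2-k_1}\sum_{k=k_1}^{k_2-1}m^n_k = m_{(k_1/2^n,1/2^n),(k_2/2^n,1/2^n)},
\]
and the right-hand side converges in norm to $m_{(x_0,0),(y_0,0)}$ as $n\to\infty$ because each vertical defect between $(k_i/2^n,1/2^n)$ and $(k_i/2^n,0)$ is of size $1/2^n$ in $\|\cdot\|_1$. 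The subtle step is uniform separation: for each $m^n_k$ I intend to build an exposing Lipschitz function $f^n_k$ of norm $1$ that acts as a tent in the first coordinate on $[k/2^n,(k+1)/2^n]$ corrected by a level-sensitive penalty in the second coordinate, so that $|f^n_k(m^{n'}_{k'})|\leq \rho<1$ whenever $(n,k)\neq(n',k')$ up to sign. This exploits the axial additivity of $\|\cdot\|_1$, which decouples a horizontal exposing action from a vertical penalty while keeping the Lipschitz norm bounded by~$1$; the main hurdle here is ensuring the penalty is strong enough to damp the value on sub-level refinements $m^{n+1}_{2k}, m^{n+1}_{2k+1}$ without inflating the norm.

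For (ii), the same horizontal family is strongly exposed one level at a time in $\mathcal{M}_2$, but the cross-level separation constants degenerate, so only a quasi-$\alpha$ structure is available. I would assign to each $\lambda=(n,k)$ a constant $\rho(\lambda)<1$ depending on $n$ only, and verify condition (iii) of Definition~\ref{def:suff-conditions}(3) by using the density of $A_\infty$ in $\mathcal{M}_2$ together with the general fact that $\omega^*$-limits of molecules capture the extreme points of the bidual ball: each $e\in\ext{B_{\mathcal{F}(\mathcal{M}_2)^{**}}}$ is a $\omega^*$-limit of molecules from some level $n(e)$, which provides the required $A_e$ and $r_e = \rho(n(e))<1$.

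Finally, for (b): in the Euclidean metric, every pair $(x_0,0),(y_0,0)$ on the base segment satisfies property (Z), since for $z=(k/2^n,1/2^n)$ with $k/2^n$ close to $(x_0+y_0)/2$ the Pythagorean expansion gives
\[
((x_0,0),(y_0,0))_{z} \approx \frac{1}{4^n(y_0-x_0)}\longrightarrow 0, \qquad \min\{d((x_0,0),z),d((y_0,0),z)\}\approx \frac{y_0-x_0}{2},
\]
so the ratio tends to $0$; hence segment molecules are not strongly exposed. Consequently any uniformly strongly exposed generating set $S\subset S_{\mathcal{F}(\mathcal{M}_2)}$ must express every segment molecule as a $\overline{\co}$ of non-segment elements of $S$, which in the Euclidean case forces the use of molecules sitting within $1/2^n$ of the base segment. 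But such near-segment molecules have exposing slices of diameter tending to zero (once again by the Pythagorean correction that distinguishes $\|\cdot\|_2$ from $\|\cdot\|_1$), contradicting the uniform diameter bound of Definition~\ref{def:suff-conditions}(1). The main obstacle will be quantifying this degeneration of exposing constants as molecules approach the base segment, and it is precisely this obstruction that explains why property $\alpha$ can hold in $\mathcal{M}_1$ but must fail in $\mathcal{M}_2$.
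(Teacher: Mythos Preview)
There is a concrete gap in your proof of (a): the horizontal molecules $m^n_k$ alone do \emph{not} generate $B_{\mathcal F(\mathcal M_1)}$. The Lipschitz function $f(x,y)=y$ has $\|f\|_L=1$ in the $\ell_1$ metric and satisfies $\hat f(m^n_k)=0$ for every horizontal molecule, yet $\hat f(m_{(0,1),(0,1/2)})=1$; hence the vertical molecule $m_{(0,1),(0,1/2)}$ lies outside $\overline{\co}\{\pm m^n_k\}$. Your telescoping argument only recovers base-segment molecules, not the vertical displacements that $M_\infty$ contains. The paper's $\alpha$-structure uses $\Gamma=\pm H\cup\pm V$, i.e.\ horizontal \emph{and} vertical molecules, and the generation step works precisely because the $\ell_1$ distance between any two points of $M_\infty$ decomposes additively into a horizontal and a vertical part. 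The exposing functionals are then built separately for each type (tent-like for horizontals, bump-like for verticals), with uniform constant $2/3$.

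The same gap undermines your plan for (ii): with only horizontal molecules, condition (iii) of quasi-$\alpha$ cannot hold, since no extreme point of $B_{\mathcal F(\mathcal M_2)^{**}}$ arising from a vertical direction is a $w^*$-limit of horizontal molecules (the same separating function $f$ obstructs it). Your assertion that every extreme point of the bidual ball is a $w^*$-limit of molecules from a \emph{single} level $n(e)$ is also unjustified and almost certainly false for a space failing the RNP. The paper does \emph{not} go through quasi-$\alpha$ for $\mathcal M_2$; instead it proves density directly (for any norm with $\|\cdot\|_\infty\leq|\cdot|\leq\|\cdot\|_1$) by a two-case perturbation argument: if $\sup_\Gamma\|\hat f\|<\|f\|_L$ one flattens $f$ below some level to force non-locality, and if equality holds one uses the explicit exposing functionals $f_{n,k},g_{n,k}$ to push the norm onto a fixed molecule while damping it elsewhere.

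For (b), your final sentence has the logic inverted: slices of small diameter are exactly what uniform strong exposure \emph{requires}. What fails is the \emph{uniformity} of the $\varepsilon$--$\delta$ relation as the molecules approach the base. The paper makes this precise by exhibiting a concrete sequence $m_{x_n,y_n}$ with $x_n=(0,1/2^n)$, $y_n=(1,1/2^{n+1})$: each is an isolated strongly exposed molecule, hence must lie in any norming $\Gamma$, yet the Gromov products $(x_n,y_n)_{z_n}$ at the midpoints $z_n=(1/2,1/2^{n+1})$ tend to $0$ while $\min\{d(x_n,z_n),d(y_n,z_n)\}$ stays bounded below, so by \cite[Proposition~3.6]{ccgmr} the family cannot be uniformly strongly exposed.
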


We divide the proof of the theorem into several steps. We start by showing that $\SA(\M_p,Y)$ is dense in $\Lip(\M_p,Y)$ for every $Y$ and $p=1,2$. Actually, we will give a more general result.
	
\begin{proposition}\label{prop:lluvia}
Let $M_\infty$, $M\subseteq \mathbb{R}^2$ be the sets defined in Theorem \ref{theo:lluvia}, and let $|\cdot|$ be a norm in $\R^2$ satisfying that $\norm{\cdot}_\infty\leq |\cdot|\leq\norm{\cdot}_1$. Consider now $\M$ to be the set $M$ endowed with the distance inherited from $(\R^2,|\cdot|)$. Then, $\SA(\M,Y)$ is dense in $\Lip(\M,Y)$ for every Banach space $Y$.
\end{proposition}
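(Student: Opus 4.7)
The plan is to fix $f \in \Lip(\M, Y)$ with $\|f\|_L = 1$ and $\epsilon \in (0, 1)$, and produce $g \in \SA(\M, Y)$ with $\|f-g\|_L < \epsilon$. The key structural observation is that every point of $M_\infty$ is isolated in $\M$: for $p = (k/2^n, 1/2^n) \in A_n$, the nearest point of $\M \setminus \{p\}$ (a neighbour in $A_n$ or the projection $(k/2^n, 0)$) sits at distance exactly $1/2^n$, because along the coordinate axes of $\R^2$ the norm $|\cdot|$ coincides with both $\|\cdot\|_\infty$ and $\|\cdot\|_1$. Hence $M_\infty$ is a countable, dense, discrete subset of $\M$, its set of accumulation points in $\M$ being exactly $[0,1] \times \{0\}$. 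By compactness of $\M$, a Lipschitz map fails to belong to $\SA(\M,Y)$ only if its defining supremum is approached by pairs of points collapsing to a common point of $[0,1] \times \{0\}$.

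By density of $M_\infty$, continuity of $f$, and the dyadic nesting (a dyadic rational at level $n$ is dyadic at every higher level), I first locate a large integer $N$ and a pair of adjacent points $p, q \in A_N$ with $d(p,q) = 1/2^N$ and
\[
\frac{\|f(p)-f(q)\|}{d(p,q)} > 1 - \frac{\epsilon}{4}.
\]
I then design a Lipschitz bump $h \colon \M \to Y$ supported near $\{p,q\}$, with $\|h\|_L < \epsilon/2$, aimed at boosting the ratio at $(p,q)$ above $1$. A natural candidate is $h(z) := \alpha\bigl(\phi_p(z) - \phi_q(z)\bigr)v$, where $\phi_p(z) := \max\{0, 1 - d(z,p)/d(p,q)\}$ and $\phi_q$ is analogous, $v \in S_Y$ is a unit vector aligned with $f(p)-f(q)$, and $\alpha$ is chosen so that $\|h\|_L = 2\alpha/d(p,q) = \epsilon/2$. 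Setting $g := f + h$ yields $\|g-f\|_L < \epsilon$ and, by the alignment of $v$,
\[
\frac{\|g(p)-g(q)\|}{d(p,q)} = \frac{\|f(p)-f(q)\| + 2\alpha}{d(p,q)} > 1 + \frac{\epsilon}{4}.
\]

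To conclude $g \in \SA(\M, Y)$ it suffices, by the structural observation above, to check that the supremum $\|g\|_L$ is not merely approached by pairs collapsing to some segment point. Outside $B(p, d(p,q)) \cup B(q, d(p,q))$ the bump $h$ vanishes, so the ratios of $g$ there equal those of $f$ and cannot exceed $1 < 1 + \epsilon/4$. For collapse pairs whose limit lies inside these two balls, one must estimate the local Lipschitz constant of $g$ at the collapse point and verify that it remains strictly below the ratio of $g$ at $(p,q)$, after which a compactness argument on $\{(x,y) \in \M^2 : d(x,y) \geq \eta\}$ for suitable $\eta > 0$ yields an attaining pair.

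I expect the main obstacle to be this final verification. For norms close to $\|\cdot\|_\infty$ the cone $\phi_p$ can saturate its Lipschitz slope along pairs that approach certain segment points (in fact, the point $((2k+1)/2^{N+1}, 1/2^{N+1}) \in A_{N+1}$ lies on the $\|\cdot\|_\infty$-metric segment between $p$ and $q$), so the simple bump above may need to be refined: for instance, by choosing $N$ large enough that the segment points lying in $B(p,d(p,q)) \cup B(q,d(p,q))$ do not support near-maximal $f$-slopes, or by first applying a small preliminary perturbation that regularizes $f|_{[0,1]\times\{0\}}$ before carrying out the boost at $(p,q)$. The sandwich $\|\cdot\|_\infty \leq |\cdot| \leq \|\cdot\|_1$ ensures that the relevant distances stay comparable across the whole family of admissible norms, so that the argument can ultimately be made uniform in $|\cdot|$.
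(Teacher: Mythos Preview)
Your first reduction step is already a genuine gap. You assert that one can always find adjacent horizontal neighbours $p,q\in A_N$ with $\|f(p)-f(q)\|/d(p,q)>1-\varepsilon/4$; but take $Y=\R$ and $f(x,y)=g(y)$, where $g(0)=0$ and $g(2^{-n})-g(2^{-n-1})=(1-\tfrac{1}{n+2})2^{-n-1}$. Since $|\cdot|\geq\|\cdot\|_\infty$ one has $\|f\|_L=1$, this norm is not attained (the vertical ratios $1-\tfrac{1}{n+2}$ approach but never reach $1$), yet every horizontal ratio on every $A_N$ is $0$. The ``dyadic nesting'' you invoke aligns first coordinates, not heights, so points at different levels never land in a common $A_N$. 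This is exactly why the paper works with the set $\Gamma$ of \emph{both} horizontal and vertical adjacent molecules, and splits into the two cases $\rho:=\sup_{m\in\Gamma}\|\hat f(m)\|<1$ versus $\rho=1$, each requiring its own global perturbation. (Incidentally, the nearest point to $(k/2^n,1/2^n)$ is $(k/2^n,1/2^{n+1})$ at distance $2^{-n-1}$, not $2^{-n}$; harmless here, but it matters when you later analyse the support of your bump.)

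Even if one grants the first step, the part you yourself flag as ``the main obstacle'' is not a detail to be filled in. The balls $B(p,d(p,q))$ and $B(q,d(p,q))$ contain the entire vertical columns $\{(k/2^N,2^{-m}):m>N\}$ and $\{((k+1)/2^N,2^{-m}):m>N\}$, so your bump $h$ is nonzero at points arbitrarily close to $[0,1]\times\{0\}$ and can itself have near-maximal slope along collapsing pairs (and, as you note, for $|\cdot|=\|\cdot\|_\infty$ the midpoint $((2k+1)/2^{N+1},1/2^{N+1})$ even sits on a metric segment between $p$ and $q$). The suggested fixes --- taking $N$ larger, or first ``regularising $f|_{[0,1]\times\{0\}}$'' --- amount to asking that $f$ already be non-local near the relevant segment point, which is precisely what has to be manufactured. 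The paper's perturbations are engineered for exactly this: in the case $\rho<1$ one replaces $f$ on all levels beyond a fixed $n_3$ by an extension of a single row with Lipschitz constant $\leq\rho$, and in the case $\rho=1$ one uses handcrafted functionals $f_{n,k},g_{n,k}$ equal to $1$ on one target molecule but uniformly $\leq\tfrac{2}{3}$ on the rest of $\Gamma$; in both cases the outcome is a map whose ratios are bounded strictly below the norm on \emph{all} pairs lying in $M_\infty\setminus\bigcup_{j\leq n_0}A_j$ for some finite $n_0$, and this is what forces non-locality.
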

	
\begin{proof}
		Let $f \in \Lip(M,Y)$ with $\|f\|_L=1$. Our aim is to approximate $f$ by strongly norm attaining Lipschitz maps, so we may assume that $f$ does not strongly attain its norm. In order to clarify the proof, let us introduce some notation.  For every $n \in \mathbb{N}\cup\{0\}$ and $k \in\{0,\ldots,2^n\}$, we denote by $(n,k)$ the point $\left (\frac{k}{2^n}, \frac{1}{2^n}\right ) \in M_\infty$. Given $n \in \mathbb{N}\cup\{0\}$ and $k \in \{0,\ldots,2^n-1\}$, we write $h_{n,k}$ to denote the molecule $m_{(n,k),(n,k+1)}$. We will say that $$H=\{h_{n,k} \colon n \in \mathbb{N}\cup\{0\}, k \in \{0,\ldots,2^n-1\}\}$$ is the set of horizontal molecules. Given $n \in \mathbb{N} \cup\{0\}$ and $k \in \{0,\ldots,2^n\}$, we write $v_{n,k}$ to denote the molecule $m_{(n,k),(n+1,2k)}$. We will say that
		\[ V=\{v_{n,k} \colon n \in \mathbb{N}\cup\{0\}, k \in\{0,\ldots,2^n\} \}\]
		is the set of vertical molecules. Finally, we define $\Gamma=\pm H \cup \pm V$.
		
		Fix $\varepsilon>0$ and let us distinguish two cases: First of all, assume that
		\[ \rho = \sup\bigl\{\bigl\|\hat{f}(m)\bigr\| \colon m \in \Gamma\bigr\} <1.\]
		Since $M_\infty$ is dense, we may find $u=(n_1,k_1)$, $v=(n_2,k_2) \in M_\infty$ such that $$\frac{k_1}{2^{n_1}}\neq \frac{k_2}{2^{n_2}}, \quad n_1\neq n_2 \quad \text{ and } \quad \hat{f}(m_{u,v})>\frac{1+\rho\varepsilon}{1+\varepsilon}.$$ Let us write $n_3=\max\{n_1,n_2\}$ and consider the set
		\[ N=\bigcup_{n=0}^{n_3} A_n. \]
		Note that if we denote by $\varphi_0$ the restriction of $f$ to $A_{n_3}$, we have $\|\varphi_0\|_L\leq \rho <1$. Then, we may extend this function to a Lipschitz function $\varphi\colon[0,1] \longrightarrow Y$ with $\|\varphi\|_L\leq \rho<1$ (we may define it affine in the gaps). We define $h \colon M_\infty \longrightarrow Y$ by
		\[ h((n,k))=  \left\{
		\begin{array}{ll}
		f((n,k)) & \text{ if $n\leq n_3$; } \\
		\varphi\left (\frac{k}{2^{n}}\right ) & \text{ if $n> n_3$.} \\
		\end{array} \right. \]
		By the way we have extended $\varphi_0$, it is clear that
		\[ \sup\left\{\bigl\|\hat{h}(m)\bigr\|\colon m \in \Gamma\right\}\leq \sup\left\{\bigl\|\hat{f}(m)\bigr\|\colon m \in \Gamma\right\}=\rho<1.\]
		Furthermore,  $|(s,0)| = s \leq |(s,t)|$ and $|(0,t)| = t \leq |(s,t)|$ for every $s$, $t \in \mathbb{R}$. Consequently, if $p$, $q$ are distinct points of $M_\infty\setminus N$, then we may find a molecule $m \in \Gamma$ such that $\|\hat{h}(m_{p,q})\|\leq \|\hat{h}(m)\|$. Indeed, given two different points  $p=(\frac{k_1}{2^n},\frac{1}{2^n})$ and $q=(\frac{k_2}{2^m},\frac{1}{2^m})$ of $M_\infty \setminus N$, we assume with no loss of generality that $n\geq m$, define $q':=(\frac{2^{n-m}k_2}{2^n},\frac{1}{2^n})$. By the assumptions on the norm we get that $\vert p-q'\vert\leq \vert p-q\vert $ and, since $\frac{k_2}{2^m}=\frac{2^{n-m}k_2}{2^n}$, we obtain that
\[
\begin{split}
\left\Vert\hat h(m_{p,q})\right\Vert=\frac{\Vert \varphi(\frac{k_1}{2^n})-\varphi(\frac{k_2}{2^m})\Vert}{\vert p-q\vert}\leq \frac{\Vert \varphi(\frac{k_1}{2^n})-\varphi(\frac{2^{n-m}k_2}{2^n})\Vert}{\vert p-q'\vert}=\Vert \hat h(m_{p,q'})\Vert,
\end{split}
\]		
and notice that $m_{p,q'}\in\co(\Gamma)$. Given $\varepsilon>0$, let us define
\begin{equation}\label{eq:g1}
g\colon M\longrightarrow Y, \qquad g=f+\varepsilon h.
\end{equation}
		It is clear that $\|g-f\|_L\leq \varepsilon$, so it will be enough to show that $g$ strongly attains its norm. On the one hand, note that
		\[ \norm{\hat{g}(m_{u,v})}=\norm{\hat{f}(m_{u,v})+\varepsilon \hat{h}(m_{u,v})}> \frac{1+\rho\varepsilon}{1+\varepsilon}(1+\varepsilon)=1+\rho\varepsilon.\]
		On the other hand, given $p$, $q$ distinct points of $M_\infty\setminus N$, we have that
		\[ \norm{\hat{g}(m_{p,q})}\leq 1+\varepsilon \norm{\hat{h}(m_{p,q})}\leq 1+\varepsilon\rho< \norm{\hat{g}(m_{u,v})}.\]
		Therefore, $g$ cannot approximate its norm at points of $M_\infty\setminus N$. Since $M_\infty\setminus N$ is dense in $[0,1]\times\{0\}$, this implies that $g$ cannot approximate its norm at arbitrarily close points, that is, $g$ is non-local. Consequently, by compactness of $M$, we conclude that $g$ must strongly attain its norm.

		Secondly, assume that $\sup\bigl\{\bigl\|\hat{f}(m)\bigr\| \colon m \in \Gamma\bigr\}=\|f\|_L$. In this case we need to define two kinds of functionals. By a density argument, it will be enough to define them on $M_\infty$. First of all, we will define functionals associated to the vertical molecules. Fix $n \in \mathbb{N}\cup\{0\}$, $k \in \{0,\ldots,2^n\}$. Then, we define $f_{n,k}\colon M_\infty \longrightarrow\mathbb{R}$ given by
		\[ f_{n,k}(p)=  \left\{
		\begin{array}{ll}
		\frac{1}{2^{n+1}} & \text{ if $p=(n,k)$}; \\
		0 & \text{ if $p \neq (n,k)$}. \\
		\end{array} \right. \]
		Note that
		\[ \hat{f}_{n,k}(v_{n,k})=\frac{f_{n,k}((n,k))-f_{n,k}((n+1,2k))}{\vert(n,k)-(n+1,2k)\vert} = \frac{1/2^{n+1}}{1/2^{n+1}}=1.\]
		Furthermore, if $(n',k') \in M$ is such that $m_{(n,k),(n',k')} \in \Gamma$ and $(n',k')\neq (n+1,2k)$, then we have that $\vert (n,k)-(n',k')\vert\geq \frac{3}{2^{n+2}}$, which implies that
		\[ \frac{|f_{n,k}((n,k))-f_{n,k}((n',k'))|}{\vert (n,k)-(n',k')\vert} \leq \frac{1/2^{n+1}}{3/2^{n+2}}=\frac{2}{3}.\]
		Since $f_{n,k}$ is null on the rest of the points, we obtain that $\hat{f}_{n,k}(m)\leq \frac{2}{3}$ holds for every $m \in \Gamma$ with $m\neq \pm v_{n,k}$.
		
		Next, we define functionals associated to the horizontal molecules. Fix $n \in \mathbb{N}\cup\{0\}$ and $k\in\{0,\ldots,2^n-1\}$. Let us define $\varphi_{n,k} \colon [0,1]\longrightarrow \mathbb{R}$ given by
		\[ \varphi_{n,k}(x)=  \left\{
		\begin{array}{ll}
		\displaystyle\frac{3}{2^{n+2}} & \text{ if $x\in[0,\frac{k}{2^n}]$; } \vspace*{0.2cm}\\
		\displaystyle\frac{2k+3}{2^{n+2}} - \frac{x}{2} & \text{ if $x \in [\frac{k}{2^n},\frac{k+1}{2^n}]$;} \vspace*{0.2cm}\\
		\displaystyle\frac{1}{2^{n+2}} & \text{ if $x \in [\frac{k+1}{2^n},1]$. } \\
		\end{array} \right. \]
		\normalsize
		It is easy to see that $\varphi_{n,k}$ is a Lipschitz function with $\|\varphi_{n,k}\|_L = \frac{1}{2}$. Now, define $g_{n,k} \colon M_\infty \longrightarrow \mathbb{R}$ as follows
		\[ g_{n,k}((n',k'))=  \left\{
		\begin{array}{ll}
		\frac{1}{2^{n}} & \text{ if $ n'\leq n$ and  $\frac{k'}{2^{n'}}\leq \frac{k}{2^n}$; } \\
		0 & \text{ if $n'\leq n$ and $ \frac{k'}{2^{n'}}> \frac{k}{2^n}$; } \\
		\varphi_{n,k}(\frac{k'}{2^{n'}}) & \text{ if $n'>n$. } \\
		\end{array} \right. \]
		On the one hand, note that
		\[ \hat{g}_{n,k}(h_{n,k})= \frac{g_{n,k}((n,k))-g_{n,k}((n,k+1))}{\vert (n,k)-(n,k+1)\vert} = \frac{1/2^n}{1/2^n}=1.\]
		On the other hand, let us show that for every $u \in \Gamma$ with $u \neq \pm h_{(n,k)}$ we have  $$|\hat{g}_{n,k}(u)|\leq \frac{1}{2}.$$
		For this, take any vertical molecule $v_{n',k'} \in V$. Note that we have $\hat{g}_{n,k}(v_{n',k'})=0$ unless $n'=n$. On the one hand, if $k'\leq k$ we get
		\[ |\hat{g}_{n,k}(v_{n,k'})|=\frac{|g_{n,k}((n,k'))-g_{n,k}((n+1,2k'))|}{\vert(n,k')-(n+1,2k')\vert}= \frac{1/2^n - 3/2^{n+2}}{1/2^{n+1}} = \frac{1}{2}. \]
		On the other hand, if $k'\geq k+1$ we have
		\[ |\hat{g}_{n,k}(v_{n,k'})|=\frac{|g_{n,k}((n,k'))-g_{n,k}((n+1,2k'))|}{\vert (n,k')-(n+1,2k')\vert}= \frac{1/2^{n+2}}{1/2^{n+1}} = \frac{1}{2}.\]
		Finally, take any horizontal molecule $h_{n',k'}$  such that $(n',k')\neq(n,k)$. If $n'<n$, we have that
		\begin{align*} |\hat{g}_{n,k}(h_{n',k'})|&=\frac{|g_{n,k}((n',k'))- g_{n,k}((n',k'+1))|}{\vert (n',k')-(n',k'+1)\vert} \leq \frac{1/2^n}{1/2^{n-1}}=\frac{1}{2}.
		\end{align*}
		If $n'=n$, the only horizontal molecule $h$ such that $g_{n,k}(h)\neq0$ is $h=h_{n,k}$, and
		if $n'>n$ we obtain
		\[ \hat{g}_{n,k}(h_{n',k'})= \hat{\varphi}_{n,k}\left (h_{n',k'}\right ) \leq \|\varphi_{n,k}\|_L =\frac{1}{2}.\]
Actually, notice that given a pair of different points $p,q\in M_\infty\setminus \bigcup\limits_{j=0}^n A_j$ it follows that there exists a pair of different points $p',q'\in M_\infty\setminus \bigcup\limits_{j=0}^n A_j$ such that $m_{p',q'}\in\Gamma$ and $\vert \hat g_{n,k}(m_{p,q})\vert\leq \vert \hat g_{n,k}(m_{p',q'})\vert\leq \frac{1}{2}$.

		Finally, let us consider $\delta>0$ satisfying
		\[ \left (1+\frac{\varepsilon}{2}\right )(1-\delta)>1+\frac{\varepsilon}{3}.\]
		Since $\|f\|_L=\sup\left\{\|\hat{f}(m)\|\colon m \in \Gamma\right\}$, we may find $m \in \Gamma$ such that $\|\hat{f}(m)\|>1-\delta$. If $m \in H\cup V$, then consider $\hat{f}_m$ the functional associated to $m$, and if $-m \in H\cup V$, consider the same functional but multiplied by $-1$. Now, let us define
\begin{equation}\label{eq:g2}
g\colon M \longrightarrow Y, \qquad \hat{g}(x)=\hat{f}(x)+\frac{\varepsilon}{2}\hat{f}_m(x)\hat{f}(m) \quad \forall\, x \in \mathcal{F}(M).
\end{equation}
		It is clear that $\|f-g\|_L\leq \frac{\varepsilon}{2}$, so it remains to prove that $g$ strongly attains its norm. On the one hand, note that
		\[ \norm{\hat{g}(m)}=\left (1+\frac{\varepsilon}{2}\right )\norm{\hat{f}(m)}>\left (1+\frac{\varepsilon}{2}\right )(1-\delta).\]
On the other hand, if $m=m_{p_0,q_0}$ for suitable $p_0\in A_{n_{p_0}}, q_0\in A_{n_{q_0}}$ we have, by the properties of the functionals $f_{n,k}$ and $g_{n,k}$, that
$$\vert \hat f_m(m_{p,q})\vert\leq \frac{2}{3}$$
if $p$ and $q$ does not belong to $\bigcup\limits_{i=0}^{j} A_i$ for $j=\max\{n_{p_0},n_{q_0}\}$. Consequently, for $p,q\notin 	\bigcup\limits_{i=0}^{j} A_i$ we get that
\[ \norm{\hat{g}(m_{p,q})}=\norm{\hat{f}(m_{p,q})+\frac{\varepsilon}{2}\hat{f}_m(m_{p,q})\hat{f}(m)}\leq\left ( 1+\frac{\varepsilon}{3}\right)<\left (1+\frac{\varepsilon}{2}\right)(1-\delta)<\norm{\hat{g}(m)},
\]
which implies that $g$ cannot approximate its norm at arbitrarily close points, that is, it is non-local. By compactness, we deduce that $g$ strongly attains its norm.
	\end{proof}

\begin{remark}\label{remark:lluvianonlocal}
Note that, in the above proof, the map $g$ defined in both cases by, respectively, formulas \eqref{eq:g1} and \eqref{eq:g2}, is non-local.
\end{remark}

Next, we show that $\Free(\M_1)$ has property $\alpha$, giving the proof of assertion (a) of Theorem \ref{theo:lluvia}.

\begin{proposition}\label{prop:lluvia_1_alpha}
For the metric space $\M_1$ defined in Theorem \ref{theo:lluvia}, we have that $\Free(\M_1)$ has property $\alpha$.
\end{proposition}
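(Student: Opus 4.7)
The natural candidate for the balanced subset of $\F{\M_1}$ realising property $\alpha$ is
\[
\Gamma = \pm H \cup \pm V,
\]
where $H$ and $V$ are the sets of horizontal and vertical molecules introduced in the proof of Proposition~\ref{prop:lluvia}. The functionals we associate to these molecules will be (Lipschitz extensions to all of $\M_1$ of) the functions $f_{n,k}$ and $g_{n,k}$ already defined there. The goal is then to verify items (i), (ii), (iii) of Definition~\ref{def:suff-conditions}(2) with constant $\rho=1/2$.

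For condition (iii), by the Hahn--Banach theorem it suffices to show that $\Gamma$ is a norming set for $\Lip(\M_1,\R)$. The key geometric input is the staircase decomposition of the $\ell_1$ metric: for any $p,q\in M_\infty$ with $p=(p_1,p_2)$ and $q=(q_1,q_2)$, the intermediate point $r=(p_1,q_2)$ still lies in $M_\infty$ and satisfies $\|p-q\|_1=\|p-r\|_1+\|r-q\|_1$. Decomposing $m_{p,q}$ through $r$ reduces the problem to horizontal molecules at a fixed level (which telescope into a convex combination of elements of $H$) and vertical molecules above a fixed dyadic abscissa (which telescope into a convex combination of elements of $V$, using the identity $\sum_{j=n_1}^{n_2-1}1/2^{j+1}=1/2^{n_1}-1/2^{n_2}$). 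Pairs with one or both points on the floor are handled by approximating with sequences in $M_\infty$, using that $\delta$ is isometric so $m_{p,q_n}\to m_{p,q}$ in $\F{\M_1}$ whenever $q_n\to q$.

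For conditions (i) and (ii), the functional $f_{n,k}$ is supported at the single point $(n,k)$ with value $1/2^{n+1}$. In the $\ell_1$ metric the nearest point of $\M_1$ to $(n,k)$ is precisely $(n+1,2k)$, at distance exactly $1/2^{n+1}$, so $\|f_{n,k}\|_L=1$ with $\hat f_{n,k}(v_{n,k})=1$. Inspecting which molecules of $\Gamma$ touch $(n,k)$ then shows that $|\hat f_{n,k}(m)|$ equals $1/2$ on the at most three molecules $\pm h_{n,k},\pm h_{n,k-1},\pm v_{n-1,k/2}$ (when they exist), and is $0$ on all other elements of $\Gamma$. For $g_{n,k}$, extend the definition from $M_\infty$ to the floor by setting
\[
g_{n,k}(x,0):=\varphi_{n,k}(x)\qquad(x\in[0,1]),
\]
which matches the natural limit values along $M_\infty$. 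The identity $\hat g_{n,k}(h_{n,k})=1$ is immediate, and $\|g_{n,k}\|_L\leq 1$ follows from a case analysis exploiting two facts: $\varphi_{n,k}$ is Lipschitz with constant $1/2$ and takes values in $[1/2^{n+2},3/2^{n+2}]$, whereas on the ``lower'' part of $M_\infty$ the function takes only the values $0$ and $1/2^n=4/2^{n+2}$; the $\ell_1$ separation of different levels provides enough room. The upper bound $|\hat g_{n,k}(m)|\leq 1/2$ for $m\in\Gamma\setminus\{\pm h_{n,k}\}$ is essentially the computation already carried out in the proof of Proposition~\ref{prop:lluvia}.

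The main obstacle is the case analysis for $\|g_{n,k}\|_L=1$ once the domain is enlarged to $\M_1$: the tightest ratios arise for pairs straddling the level $n$ (one point with discrete value in $\{0,1/2^n\}$, the other with continuous value in the range of $\varphi_{n,k}$), and require using the bound $\|p-q\|_1\ge 1/2^n$ precisely in those configurations where the numerator can reach $3/2^{n+2}$. This is where the rigidity of the $\ell_1$ metric is used in full: the same construction collapses in strictly convex norms, which is consistent with item (b) of Theorem~\ref{theo:lluvia} asserting that no uniformly strongly exposed generating set exists for $\F{\M_2}$.
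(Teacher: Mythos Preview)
Your proposal is correct and follows the same route as the paper: take $\Gamma=\pm H\cup\pm V$ and pair each molecule with the corresponding functional $f_{n,k}$ or $g_{n,k}$ from the proof of Proposition~\ref{prop:lluvia}. The paper's argument is just the terse version of yours---it observes that the $\ell_1$ metric makes every molecule between points of $M_\infty$ a convex combination of horizontal and vertical molecules (your staircase decomposition), and then simply quotes the estimates already established in Proposition~\ref{prop:lluvia}.

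Two small remarks. First, your intermediate point $r=(p_1,q_2)$ lies in $M_\infty$ only when $p$ is at the coarser level; you should say without loss of generality $n_1\leq n_2$ before invoking it (the paper's formulation ``with $n_1<n_2$'' makes this explicit). Second, your direct case analysis for $\|g_{n,k}\|_L\leq 1$ is correct but unnecessary: once you have established (iii), the set $\Gamma$ is norming for $\Lip(\M_1,\R)$, so $\|g_{n,k}\|_L=\sup_{m\in\Gamma}|\hat g_{n,k}(m)|$, and this supremum has already been bounded by $1$ in the proof of Proposition~\ref{prop:lluvia}. This is the shortcut the paper takes implicitly. Incidentally, your constant $\rho=1/2$ is sharper than the paper's $2/3$: the paper carries over the conservative bound $|(n,k)-(n',k')|\geq 3/2^{n+2}$ from the general-norm setting of Proposition~\ref{prop:lluvia}, whereas in the $\ell_1$ metric the relevant distances are all at least $1/2^n$.
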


	\begin{proof}
		Since the metric $d$ consists of summing vertical and horizontal coordinates, and $M_\infty$ is dense in $M$, it is clear that the set $\Gamma= \pm H \cup \pm V$ considered in the proof of Proposition \ref{prop:lluvia} verifies that $B_{\mathcal{F}(M)}=\cco(\Gamma)$. To see this, it is enough to note that given $(n_1,k_1), (n_2,k_2) \in M$, with $n_1<n_2$, we will have that
		\[ d((n_1,k_1), (n_2,k_2))=d((n_1,k_1), (n_2,2^{n_2-n_1}k_1)) + d((n_2,2^{n_2-n_1}k_1), (n_2,k_2)).\]
		Therefore, we need to find a set of functionals $\Gamma^*$ associated to $\Gamma$ verifying the definition of property $\alpha$. In view of the proof of Proposition \ref{prop:lluvia}, it will be enough to consider the sets
		\[ H^*=\{f_{n,k} \colon n \in \mathbb{N}\cup\{0\}, k \in \{0,\ldots, 2^n-1\}\},\]
		\[V^*=\{g_{n,k} \colon n \in \mathbb{N}\cup\{0\}, k \in \{0,\ldots, 2^n\}\},\]
		and $\Gamma^*=\pm H^* \cup \pm V^*$, to obtain that the pair $(\Gamma, \Gamma^*)\subseteq \mathcal{F}(M) \times \Lip(M,\mathbb{R})$ satisfies the statements of property $\alpha$ with constant $\frac{2}{3}$.
\end{proof}
	
The last part of the proof of Theorem \ref{theo:lluvia} is contained in the next proposition.

\begin{proposition}\label{prop:nocufe}
Let $\M_2$ be the metric space given in Theorem \ref{theo:lluvia}. If $\Gamma\subseteq \Mol{\M_2}$ is a subset satisfying that $\overline{\co}(\Gamma)=B_{\Free(\M_2)}$, then $\Gamma$ is not a uniformly strongly exposed set.
\end{proposition}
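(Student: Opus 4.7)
I argue by contradiction. Suppose $\Gamma\subseteq\Mol{\M_2}$ is uniformly strongly exposed with associated exposing functionals $\{h_\gamma\}_{\gamma\in\Gamma}\subseteq S_{\Free(\M_2)^*}$ and modulus $\delta\colon(0,\infty)\to(0,\infty)$, and $\overline{\co}(\Gamma)=B_{\Free(\M_2)}$.

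The central idea is to exploit the presence of the isometric copy $[0,1]\times\{0\}\subseteq\M_2$. Fix any two distinct points $p,q\in[0,1]\times\{0\}$ and let $z\in[0,1]\times\{0\}$ be their Euclidean midpoint. Since $z$ lies on the metric segment $[p,q]$ we have $(p,q)_z=0$ and the identity
\[
m_{p,q}=\tfrac12\, m_{p,z}+\tfrac12\, m_{z,q}
\]
holds in $\Free(\M_2)$. Moreover, testing against the $1$-Lipschitz triangle function $f(x,y)=|x-z_1|$, where $z=(z_1,0)$ (extended trivially to $\M_2$ by neglecting the second coordinate), one checks by direct computation that $\Vert m_{p,z}-m_{z,q}\Vert_{\Free(\M_2)}=2$.

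\smallskip\noindent\textbf{Step 1 (slice incompatibility).} Set $\delta_0:=\delta(1)>0$, so that $\diam S(B_{\Free(\M_2)},h_\gamma,\delta_0)\leq 1$ for every $\gamma\in\Gamma$. Since $\Vert m_{p,z}-m_{z,q}\Vert=2>1$, the two molecules $m_{p,z}$ and $m_{z,q}$ cannot both lie in the slice $S(B_{\Free(\M_2)},h_\gamma,\delta_0)$; hence $h_\gamma$ takes a value at most $1-\delta_0$ on at least one of them. Averaging,
\[
h_\gamma(m_{p,q})=\tfrac12\bigl(h_\gamma(m_{p,z})+h_\gamma(m_{z,q})\bigr)\leq\tfrac12\bigl(1+(1-\delta_0)\bigr)=1-\tfrac{\delta_0}{2}
\]
for every $\gamma\in\Gamma$.

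\smallskip\noindent\textbf{Step 2 (contradiction via the convex-hull representation).} Since $m_{p,q}\in B_{\Free(\M_2)}=\overline{\co}(\Gamma)$ and $\Vert m_{p,q}\Vert=1$, Hahn--Banach yields $\hat g\in S_{\Free(\M_2)^*}$ with $\hat g(m_{p,q})=1$. Writing $m_{p,q}=\lim_k\mu^{(k)}$ with $\mu^{(k)}=\sum_i\lambda_i^{(k)}\gamma_i^{(k)}$ a convex combination of elements of $\Gamma$, one has $\hat g(\mu^{(k)})\to 1$. For every $\varepsilon>0$, a Markov-type pigeonholing then produces $k$ and $i$, with $\lambda_i^{(k)}>0$, such that $\hat g(\gamma^\ast)>1-\varepsilon$ for $\gamma^\ast:=\gamma_i^{(k)}\in\Gamma$. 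The plan is to convert this into the norm estimate $\Vert\gamma^\ast-m_{p,q}\Vert<\eta(\varepsilon)$ with $\eta(\varepsilon)\to 0$ as $\varepsilon\to 0$; combined with $h_{\gamma^\ast}(\gamma^\ast)=1$, this gives $h_{\gamma^\ast}(m_{p,q})\geq 1-\Vert\gamma^\ast-m_{p,q}\Vert>1-\eta(\varepsilon)$, which for $\varepsilon$ small enough exceeds $1-\delta_0/2$ and contradicts Step~1.

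\smallskip\noindent\textbf{Main obstacle.} The technical heart of the argument is the conversion in Step~2 from $\hat g(\gamma^\ast)>1-\varepsilon$ to the norm estimate $\Vert\gamma^\ast-m_{p,q}\Vert<\eta(\varepsilon)$, which does not follow from the uniform slice modulus of the $h_\gamma$ alone (as $\hat g$ need not be any of the $h_\gamma$). The plan is to leverage, in addition, the defining property of $m_{p,q}$ as a norm-limit of convex combinations in $\co(\Gamma)$: convexity, the uniform slice modulus applied simultaneously to several $h_{\gamma_i^{(k)}}$, and the fact that $\hat g$ attains its norm on $m_{p,q}$ should combine (perhaps after passing to a subnet in the dual weak-$\ast$ topology) to give the required transfer, after which Step~1 closes the loop.
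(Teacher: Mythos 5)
Your Step 1 is correct and is a genuinely nice observation: for $p,q$ in the segment $[0,1]\times\{0\}$ with midpoint $z$, the identity $m_{p,q}=\tfrac12 m_{p,z}+\tfrac12 m_{z,q}$ together with $\Vert m_{p,z}-m_{z,q}\Vert=2$ forces $h_\gamma(m_{p,q})\leq 1-\delta_0/2$ for every $\gamma\in\Gamma$. The problem is that this alone is not a contradiction, and the mechanism you propose in Step 2 to reach one cannot work. The fact that $m_{p,q}$ is a norm-one element of $\overline{\co}(\Gamma)$ normed by some $\hat g$ does not produce elements of $\Gamma$ norm-close to $m_{p,q}$: in $\ell_1$ the set $\Gamma=\{\pm e_n\}$ is uniformly strongly exposed, generates the ball by closed convex hull, and yet the norm-one point $\tfrac12 e_1+\tfrac12 e_2$ (normed by $(1,1,0,\dots)\in S_{\ell_\infty}$) is at distance $1$ from $\Gamma$. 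So the ``transfer'' $\hat g(\gamma^\ast)>1-\varepsilon\Rightarrow\Vert\gamma^\ast-m_{p,q}\Vert<\eta(\varepsilon)$ is false in general, and no amount of pigeonholing over the convex combinations will recover it; indeed $m_{p,q}$ is a nontrivial midpoint of two far-apart elements of the ball, hence very far from being extreme, which is exactly why it need not be approximable from $\Gamma$ on abstract grounds.

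What is missing is a concrete reason why $\Gamma$ must contain specific molecules, and this is the heart of the paper's proof: since $\overline{\co}(\Gamma)=B_{\Free(\M_2)}$, one has $\dent{B_{\Free(\M_2)}}\subseteq\overline{\Gamma}$, and the molecules $m_{x_n,y_n}$ with $x_n=(0,1/2^n)$, $y_n=(1,1/2^{n+1})$ are strongly exposed (the pairs fail property (Z)) and \emph{isolated} in $\Mol{\M_2}$, hence belong to $\Gamma$ itself. The paper then shows directly that $\{m_{x_n,y_n}\}_n$ is not uniformly strongly exposed via the quantitative criterion of \cite[Proposition 3.6]{ccgmr}, using the witnesses $z_n=(\tfrac12,1/2^{n+1})$ for which $(x_n,y_n)_{z_n}\to 0$ while $\min\{d(x_n,z_n),d(y_n,z_n)\}=\tfrac12$. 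If you prefer to keep your Step 1, you could instead observe that $m_{x_n,y_n}\to m_{(0,0),(1,0)}$ in norm (the map $(x,y)\mapsto m_{x,y}$ is continuous off the diagonal), so for large $n$ one gets $h_{m_{x_n,y_n}}(m_{(0,0),(1,0)})\geq 1-\Vert m_{x_n,y_n}-m_{(0,0),(1,0)}\Vert>1-\delta_0/2$, contradicting your Step 1 with $p=(0,0)$, $q=(1,0)$. Either way, the identification of forced members of $\Gamma$ via the extremal structure of $\Free(\M_2)$ is indispensable and is absent from your plan.
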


\begin{proof}
Pick such a subset $\Gamma$. By the paragraph below Corollary~3.10 in \cite{ccgmr}, it follows that $\dent{B_{\Free(\M_2)}}\subseteq \overline{\Gamma}$. Now, for every $n\in\mathbb N$, consider the points
    $$x_n:=\left(0,\frac{1}{2^{n}}\right) \quad \text{and} \quad  y_n:=\left(1,\frac{1}{2^{n+1}}\right).$$
It is clear that the pair $(x_n,y_n)$ fails property (Z), so $m_{x_n,y_n}$ is a strongly exposed point. Furthermore, \cite[Lemma 1.3]{ccgmr} implies that $m_{x_n,y_n}$ is an isolated point in $\Mol{\M_2}$, so $m_{x_n,y_n}\in\Gamma$. We will prove that the set $\{m_{x_n,y_n}\colon n\in\mathbb N\}$ is not uniformly strongly exposed. To do so, we will use the criterium given in \cite[Proposition 3.6]{ccgmr}. Let $z_n = (\frac{1}{2}, \frac{1}{2^{n+1}})$. Note that
\[ \min\{d(x_n,z_n), d(z_n,y_n)\}=\frac12\]
and
\begin{align*}
2(x_n,y_n)_{z_n} = \left(\frac{1}{4}+\frac{1}{2^{2n+2}}\right)^{1/2}+\frac{1}{2}-\left(1+\frac{1}{2^{2n+2}}\right)^{1/2}\longrightarrow 0
\end{align*}
as $n\to\infty$. Now, \cite[Proposition 3.6]{ccgmr} finishes the proof.
\end{proof}

	\begin{remark}
{\slshape		Note that the Lipschitz-free space over the metric spaces $\M_p$ in Theorem \ref{theo:lluvia} fails the RNP for $p=1,2$ since $\M_p$ contains an isometric copy of $[0,1]$.}\  Even more, there is a $1$-Lipschitz retraction $r\colon \M_p\longrightarrow [0,1]$, and this implies that $\mathcal F(\M_p)$ even contains a complemented copy of $L_1[0,1]$.
	\end{remark}

Let us point out some consequences from the previous examples.

\begin{remark}
In \cite[p.~115]{Godefroy-survey-2015} the author asks whether, given a compact metric space $M$, the density of $\SA(M,\mathbb R)$ in $\Lip(M,\mathbb R)$ implies that $\lip(M)$ strongly separates the points of $M$ (see \cite[p.~110]{Godefroy-survey-2015} for details). Note that the spaces $\M_p$ provide a counterexample for $p=1,2$, because if $\lip(M)$ strongly separates the points of $M$ then, in particular, $\mathcal F(M)$ is isometric to a dual Banach space. However, $\mathcal F(\M_p)$ is not even isomorphic to any dual Banach space as it is a separable Banach space failing the RNP.
\end{remark}

Let $M$ be a metric space. In \cite[Section 3.4]{ccgmr} it is stated to be unknown whether the density of $\SA(M,Y)$ in $\Lip(M,Y)$, for every Banach space $Y$, implies any of the following properties:
\begin{enumerate}
\item $\mathcal F(M)$ has the RNP.
\item $B_{\mathcal F(M)}=\overline{\co}(S)$, where $S$ is a uniformly strongly exposed set.
\item $\mathcal F(M)$ has property quasi-$\alpha$.
\end{enumerate}
From our results it follows that the density of $\SA(M,Y)$ for every $Y$ does not imply any of the above properties. Indeed, an example failing simultaneously all the above properties can be given, as the following example shows.

\begin{example}\label{example:frankenstein}
{\slshape There is a complete metric space $M$ satisfying that $\SA(M,Y)$ is dense in $\Lip(M,Y)$ for every Banach space $Y$ and such that $\F{M}$ fails the RNP, property $\alpha$, property quasi-$\alpha$, and it does not contain any norming uniformly strongly exposed set.}
\end{example}

We need the following easy result which we prove since we have not been able to find a reference.

\begin{lemma}\label{lemma:qalpha}
Let $X$, $Y$ be two Banach spaces and write $Z:=X\oplus_1 Y$.
\begin{enumerate}
\item[(a)] If $Z$ has property quasi-$\alpha$, then $X$ has property quasi-$\alpha$.
\item[(b)] Assume that $S_{Z}$ contains a uniformly strongly exposed subset $\Gamma$ such that $\overline{\co}(\Gamma)=B_{Z}$. Then, $S_X$ contains a uniformly strongly exposed subset $\Delta$ such that $\overline{\co}(\Delta)=B_X$.
\end{enumerate}
\end{lemma}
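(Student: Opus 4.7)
The plan is to exploit the canonical identifications $Z^{*} = X^{*} \oplus_{\infty} Y^{*}$ and $Z^{**} = X^{**} \oplus_{1} Y^{**}$ together with the standard fact that
\[
\ext{B_{X \oplus_{1} Y}} = \bigl\{(x,0) : x \in \ext{B_{X}}\bigr\} \cup \bigl\{(0,y) : y \in \ext{B_{Y}}\bigr\}.
\]
In both (a) and (b) the distinguished vectors of $S_{Z}$ are strongly exposed, hence extreme, so each one has the form $(x,0)$ with $x \in S_{X}$ or $(0,y)$ with $y \in S_{Y}$. My conclusion will be extracted from the ``$X$-part'' only, after dropping the zero second coordinate.

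For (a), write the quasi-$\alpha$ witnesses in $Z$ as $z_{\lambda} = (a_{\lambda}, b_{\lambda})$ with functionals $z_{\lambda}^{*} = (f_{\lambda}, g_{\lambda})$, set $\Lambda_{X} := \{\lambda : b_{\lambda} = 0\}$ and $A_{X} := \{a_{\lambda} : \lambda \in \Lambda_{X}\}$, and take $\{f_{\lambda}\}_{\lambda \in \Lambda_{X}}$ with $\rho|_{\Lambda_{X}}$ as the candidate quasi-$\alpha$ structure on $X$. Conditions (i) and (ii) are immediate: the identities $f_{\lambda}(a_{\lambda}) = z_{\lambda}^{*}(z_{\lambda}) = \pm 1$ together with $\max\{\|f_\lambda\|,\|g_\lambda\|\}=1$ force $\|f_{\lambda}\|=1$, while $f_{\lambda}(a_{\mu}) = z_{\lambda}^{*}(z_{\mu})$ for $\lambda,\mu \in \Lambda_{X}$ transfers the constant $\rho(\lambda)$; balancedness of $A_{X}$ is inherited from that of $A$. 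For (iii), given $e \in \ext{B_{X^{**}}}$ the point $(e,0)$ is extreme in $B_{Z^{**}}$; applying property quasi-$\alpha$ for $Z$ produces a subset $A_{(e,0)} \subseteq A$ with $\pm(e,0) \in \overline{J_{Z}(A_{(e,0)})}^{\,w^{*}}$ and $r_{(e,0)} < 1$. Testing an approximating net against functionals of the form $(f,0) \in Z^{*}$ gives $J_{X}(a_{\mu}) \to \pm e$ weak*, and since $e \neq 0$ a cofinal subnet must lie in $\Lambda_{X}$ (otherwise $a_{\mu}=0$ eventually, forcing $e=0$). That cofinal subnet defines the required $A_{e} \subseteq A_{X}$ with $r_{e} \leq r_{(e,0)} < 1$.

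For (b), set $\Delta := \{x \in S_{X} : (x,0) \in \Gamma\}$ and, for each $\gamma = (x,0)\in\Gamma$ with exposing functional $h_{\gamma} = (f_{\gamma}, g_{\gamma})$, assign to $x$ the functional $f_{\gamma}$. Uniform strong exposition of $\Delta$ with the same $\varepsilon$--$\delta$ relation as $\Gamma$ is automatic because the isometric embedding $x' \mapsto (x',0)$ of $B_{X}$ into $B_{Z}$ sends $\{x' \in B_{X} : f_{\gamma}(x') > 1-\delta\}$ into the slice $S(B_{Z}, h_{\gamma}, \delta)$, whose diameter is by hypothesis less than $\varepsilon$. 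To establish $B_{X} = \overline{\co}(\Delta)$, apply the norm-one projection $\pi_{X} \colon Z \to X$ to $\overline{\co}(\Gamma) = B_{Z}$: since $\pi_{X}(\Gamma) \subseteq \Delta \cup \{0\}$ one gets $B_{X} \subseteq \overline{\co}(\Delta \cup \{0\})$, and for $x \in S_{X}$ an $\ell_{1}$-norm balance (the total weight on the $\{0\}$-atom of an approximating convex combination is forced to vanish) yields $S_{X} \subseteq \overline{\co}(\Delta)$; the balancedness $-S_{X} = S_{X}$ then gives $0 \in \overline{\co}(\Delta)$ and hence $B_{X} = \co(S_{X} \cup \{0\}) \subseteq \overline{\co}(\Delta)$.

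The main obstacle is the extraction step in (iii) of (a): converting a weak* approximation of $(e,0)$ inside $J_{Z}(A_{(e,0)})$ into a weak* approximation of $e$ inside $J_{X}$ applied to the $X$-part of that subfamily. The nonvanishing of $e$, which is automatic for an extreme point of $B_{X^{**}}$, is precisely what forbids the approximating net from being cofinally in the $Y$-part; once the correct cofinal subnet is identified, the transfer of the constants $\rho(\mu)$ is routine bookkeeping.
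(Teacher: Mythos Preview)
Your proof is correct and follows essentially the same route as the paper's. Both arguments hinge on the observation that the distinguished vectors in $S_{Z}$ are extreme in $B_{Z}=B_{X\oplus_1 Y}$, hence each has a zero component, and one then restricts to the $X$-part. In (a)(iii) the paper uses the weak$^*$-continuous biadjoint $\pi^{**}$ to push the closure through, whereas you argue directly with nets and isolate a cofinal subnet lying in $\Lambda_X$; your version is slightly more explicit about why the resulting $A_e$ is genuinely contained in $A_X$ (the paper's $A_{e^{**}}=\pi(A_{(e^{**},0)})$ may pick up $0$, which must tacitly be discarded). In (b) the paper simply appeals to ``the shape of the unit ball of an $\ell_1$-sum'', while you spell out the projection-and-renormalisation step showing $S_X\subseteq\overline{\co}(\Delta)$; both are the same idea.
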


\begin{proof}
(a). Let $A_Z:=\bigl\{(x_\lambda,y_\lambda)\colon \lambda\in \Lambda_Z\bigr\}$, $\bigl\{(x_\lambda^*,y_\lambda^*)\colon \lambda\in \Lambda_Z\bigr\}$ and $\rho_Z:\Lambda_Z\longrightarrow \R$ be the sets and the function given by the definition of property quasi-$\alpha$. Since $$A_Z\subseteq \ext{B_Z}=\left(\ext{B_X}\times \{0\} \right)\cup \left(\{0\}\times \ext{B_Y} \right),$$ we may consider
$$A_X:=A_Z\cap (B_X\times \{0\})\equiv \{x_\lambda\colon \lambda\in \Lambda_X\}$$
for convenient non-empty subset $\Lambda_X$ of $\Lambda_Z$. Let us see that $X$ has property quasi-$\alpha$ witnessed by the sets $A_X$ and $\{x_\lambda^*\colon \lambda\in \Lambda_X\}$ and the function $\rho_X:=\rho_Z|_{\Lambda_X}\colon \Lambda_X\longrightarrow \R$. Indeed:
\begin{itemize}
    \item For every $\lambda\in \Lambda_X$, we have that
    \[       x_\lambda^*(x_\lambda)=(x_\lambda^*,y_\lambda^*)(x_\lambda, 0)=1.
  \]
\item For $\mu\neq \lambda$, we have that
     \[
              \vert x_\lambda^*(x_\mu)\vert=\vert (x_\lambda^*,y_\lambda^*)(x_\mu,0)\vert\leq \rho_Z(\lambda)=\rho_X(\lambda)<1.
         \]
     \item Given $e^{**}\in \ext{B_{X^{**}}}$, then $(e^{**},0)\in \ext{B_{Z^{**}}}$, so we can find $A_{(e^{**},0)}\subset A_Z$ and $\omega\in \{-1,1\}$ such that $$\omega (e^{**},0)\in \overline{J_Z(A_{(e^{**},0)})}^{\,w^*}$$
     and $\sup\{\rho_Z(\lambda)\colon  (x_\lambda,y_\lambda)\in A_{(e^{**},0)} \}<1$; we define $A_{e^{**}}=\pi(A_{(e^{**},0)})$ (where $\pi\colon Z\longrightarrow X$ denotes the natural projection) and observe that
     \begin{align*}
         \omega e^{**}=\omega\pi^{**}(e^{**},0)&\in \pi^{**}\Bigl(\overline{J_Z(\Lambda_{(e^{**},0)})}^{\,w^*}\Bigr) \mathop{\subseteq}\limits^{\diamondsuit} \ \overline{[\pi^{**}\circ J_Z](\Lambda_{(e^{**},0)})}^{\,w^*} \\ &  =\overline{[J_X\circ\pi](\Lambda_{(e^{**},0)})}^{w^*} =\overline{J_X(\Lambda_{e^{**}})}^{\,w^*},
     \end{align*}
     where the inclusion $\diamondsuit$ follows from the weak-star continuity of $\pi^{**}$.
     Now, it is clear that $$\sup\{\rho_X(\lambda)\colon  x_\lambda\in A_{e^{**}}\} \leq \sup\{\rho_Z(\lambda)\colon  (x_\lambda,y_\lambda)\in A_{(e^{**},0)} \}<1.$$
\end{itemize}

(b). Since $\Gamma$ is made of strongly exposed points of $B_{Z}$, then every element $(x,y)\in \Gamma$ satisfies that either $\Vert x\Vert=1$ and $y=0$ or $x=0$ and $\Vert y\Vert=1$. Define
$$\Delta:=\{x\in S_X\colon (x,0)\in \Gamma\}.$$
Given $(x,0)\in \Gamma$, the definition of uniformly strongly exposing set yields a strongly exposing functional $(f_x,g_x)\in S_{Z^*}$ associated to $(x,0)$. Notice that $\Vert f_x\Vert=1$ since $1=\langle(f_x,g_x),(x,0)\rangle=f_x(x)$. It is clear that $\Delta$ is a uniformly strongly exposed set by making use of the fact that it is identified with a subset of $\Gamma$ which is a uniformly strongly exposed set. The fact that $\overline{\co}(\Delta)=B_X$ follows from the fact that $\overline{\co}(\Gamma)=B_Z$ and the shape of the unit ball of an $\ell_1$-sum.
\end{proof}

\begin{proof}[Proof of Example \ref{example:frankenstein}]
Let us consider, following the notation of \cite{wea5}, the metric space
$$M:=\M_2\coprod [0,1]^\frac{1}{2}.$$
It is known (c.f.\ e.g.\ \cite[Proposition 1.4]{ccgmr} or \cite[Proposition 3.9]{wea5}) that $\mathcal F(M)=\mathcal F(\M_2)\oplus_1\mathcal F([0,1]^\frac{1}{2})$. By Proposition \ref{prop:lluvia}, (P2.2) on page 3, and \cite[Proposition 5.6]{cm-preprint}, we get that $\SA(M,Y)$ is dense in $\Lip(M,Y)$ for every Banach space $Y$. Also $\mathcal F(M)$ fails property quasi-$\alpha$ because $\mathcal F([0,1]^\frac{1}{2})$ fails property quasi-$\alpha$ \cite[Example 3.22]{ccgmr} and we may use Lemma~\ref{lemma:qalpha}.a. Further, $\mathcal F(M)$ fails the RNP because it contains an isometric copy of $L_1[0,1]$. Finally, there is no uniformly strongly exposed set $\Gamma\subseteq S_X$ such that $\overline{\co}(\Gamma)=B_{\mathcal F(M)}$ by Proposition \ref{prop:nocufe} and Lemma \ref{lemma:qalpha}.b.
\end{proof}

Although we know that the density of $\SA(M,Y)$ in $\Lip(M,Y)$ for every Banach space $Y$ does not imply the RNP by the examples given in Theorem \ref{theo:lluvia}, we would like to take a closer look to the possible relationship between these two properties. Since the RNP is an isomorphic property, if a Banach space $X$ has the RNP, then $X$ has Lindenstrauss property A for all equivalent norms. A classical result in theory of norm attaining operators is that the converse also holds, i.e.\ a Banach space $X$ has the RNP if, and only if, every equivalent renorming of $X$ has Lindenstrauss property A (a result by Bourgain and Huff, see \cite{Huff}). On the other hand, the RNP of $X$ is also equivalent to the dentability of the unit ball of each equivalent renorming of $X$ and so, equivalent to the dentability of the unit balls of all equivalent renormings of closed subspaces of $X$. It is then natural to wonder whether there is a Lipschitz version for strongly norm attaining Lipschitz maps. Namely, we consider the following assertions on a complete metric space $M$:
\begin{enumerate}
    \item[(1)] $\mathcal F(M)$ has the RNP.
    \item[(2)] $\overline{\SA(N',Y)}=\Lip(N',Y)$ for every metric space $N'$ bi-Lipschitz equivalent to a closed subset of $M$ and every Banach space $Y$.
    \item[(3)] For every metric space $N'$ bi-Lipschitz equivalent to a closed subset of $M$, the unit ball of $\Free(N')$ is dentable.
    \end{enumerate}
Then, we have that (1)$\Rightarrow$(2)$\Rightarrow$(3). Indeed, note that (1)$\Rightarrow$(2) follows from \cite[Proposition 7.4]{lppr} since, in this case, $\Free(N')$ has the RNP. In order to prove that (2)$\Rightarrow$(3), notice that given a metric space $M$ under the assumption (2), it follows that, if $N'$ is a metric space bi-Lipschitz equivalent to a subspace of $M$, then $\Free(N')$ has Lindenstrauss property A, so $B_{\Free(N')}$ is dentable (see \cite[Proposition 1]{bou77}). We do not know whether the implication (2)$\Rightarrow$(1) holds. On the other hand, another natural question is whether the implication (3)$\Rightarrow$(1) holds. However, this is not longer true, and even the implication (3)$\Rightarrow$(2) fails.

\begin{example}
Let $M$ be a nowhere dense closed subset of $[0,1]$ whose Lebesgue measure is positive. Then, given any metric space $N'$ bi-Lipschitz equivalent to a closed subspace of $M$, it follows that $N'$ is not geodesic (because it is disconnected). Consequently, $B_{\Free(N')}$ has strongly exposed points by \cite[Corollary~5.11]{gprdauga} and so, it is dentable. However, $\SA(M,\mathbb{R})$ is not dense in $\Lip(M,\mathbb{R})$ by \cite[Theorem~2.3]{ccgmr}.
\end{example}

Let us finally mention that we do not know whether any of the following two properties, which are weaker than (2) above, implies $\Free(M)$ to have the RNP:
\begin{enumerate}
    \item[(4)] $\overline{\SA(N,Y)}=\Lip(N,Y)$ for every closed subset $N$ of $M$ and every Banach space $Y$.
    \item[(5)] $\overline{\SA(M',Y)}=\Lip(M',Y)$ for every metric space $M'$ bi-Lipschitz equivalent to $M$ and every Banach space $Y$.
\end{enumerate}

\section{Consequences of the density of strongly norm attaining Lipschitz maps}\label{section:abiden}

It is known that the density of the set of strongly norm attaining Lipschitz maps from a metric space $M$ to a Banach space $Y$ is stronger than the density of $\NA(\F{M},Y)$ as, for instance, $\NA(\F{M},\R)$ is always dense by the Bishop-Phelps theorem, but there are many metric spaces $M$ for which $\SA(M,\R)$ is not dense. Our aim in this section is to deepen in this line, showing that the density of $\SA(M,Y)$ has important consequences. In particular, we will show that some results of Lindenstrauss and Bourgain can be somehow improved in the setting of Lipschitz-free spaces.

The next two technical results are the key to getting all the goals of this section.

\begin{lemma}\label{Lemma:dichext} Let $M$ be a complete metric space, let $f\in \SA(M, \mathbb R)$, and let $m_{p,q}\in \Mol M $ such that $\hat{f}(m_{p,q})=\|f\|_L$. Consider the set
		\[ F_{p,q}:=\{(x,y) \in M^2 \colon x\neq y,\ d(p,q)=d(p,x)+d(x,y)+d(y,q)\}. \]Then, either there is $(x,y)\in F_{p,q}$ such that $m_{x,y}\in \ext{B_{\mathcal F(M)}}$ or there is an isometric embedding $\phi \colon [0,d(p,q)]\longrightarrow M$ for which $\phi(0)=p$ and $\phi(d(p,q))=q$.
	\end{lemma}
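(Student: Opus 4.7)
The plan is to prove the dichotomy directly: assuming no $(x,y) \in F_{p,q}$ yields an extreme molecule $m_{x,y}$, I will construct the claimed isometric embedding. After normalizing $\|f\|_L = 1$ and (by replacing $f$ with $-f$ if necessary) $f(p) - f(q) = d(p,q)$, the first observation is that every pair $(x,y) \in F_{p,q}$ also gives a norm-attaining molecule. Indeed, the identity $d(p,q) = d(p,x) + d(x,y) + d(y,q)$ combined with the telescoping $f(p) - f(q) = \bigl(f(p) - f(x)\bigr) + \bigl(f(x) - f(y)\bigr) + \bigl(f(y) - f(q)\bigr)$ and the bound $\|f\|_L = 1$ on each summand forces $f(x) - f(y) = d(x,y)$, so $\hat f(m_{x,y}) = 1$. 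This is the feature I will exploit when restricting attention to pairs inside $F_{p,q}$.

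Next I will invoke the known characterization of extreme molecules (see \cite{ag,wea5}): $m_{x,y} \in \ext{B_{\Free(M)}}$ if and only if the metric segment $[x,y]$ equals $\{x,y\}$. Under our running hypothesis, every $[x,y]$ with $(x,y) \in F_{p,q}$ therefore contains a strict intermediate point, which is the only structural fact I need to feed into the construction.

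The core argument applies Zorn's lemma to the poset $\mathcal{P}$ of isometric maps $\phi \colon D \to M$ with $\{0, d(p,q)\} \subseteq D \subseteq [0, d(p,q)]$, $\phi(0) = p$, $\phi(d(p,q)) = q$, ordered by extension; unions give upper bounds for chains, so a maximal element $\phi_* \colon D_* \to M$ exists. I claim $D_*$ is dense in $[0, d(p,q)]$: otherwise pick $t_1 < t_2$ in $D_*$ with $(t_1, t_2) \cap D_* = \emptyset$, and note that every $\phi_*(t) \in [p,q]$ (because $d(p, \phi_*(t)) + d(\phi_*(t), q) = t + (d(p,q) - t) = d(p,q)$), so $(\phi_*(t_1), \phi_*(t_2)) \in F_{p,q}$. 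By the previous step there is $z \in [\phi_*(t_1), \phi_*(t_2)] \setminus \{\phi_*(t_1), \phi_*(t_2)\}$; setting $s := t_1 + d(\phi_*(t_1), z) \in (t_1, t_2)$ and $\phi_*(s) := z$ then yields a strict extension in $\mathcal{P}$, contradicting maximality. Completeness of $M$ lets $\phi_*$ extend by uniform continuity from the dense $D_*$ to an isometric embedding $\phi \colon [0, d(p,q)] \to M$ preserving both endpoints.

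The main technical step I anticipate is verifying that the insertion of $z$ above really produces an element of $\mathcal{P}$, i.e., that $\phi_*$ enlarged by $\phi_*(s) = z$ remains an isometry on all of $D_* \cup \{s\}$ and not just for the adjacent pairs $\{s, t_1\}$ and $\{s, t_2\}$. For $t \in D_*$ with $t < t_1$ (the case $t > t_2$ being symmetric), the upper bound $d(\phi_*(t), z) \leq s - t$ follows from the triangle inequality through $\phi_*(t_1)$, while the matching lower bound uses that $(\phi_*(t), \phi_*(t_2)) \in F_{p,q}$ forces $d(\phi_*(t), \phi_*(t_2)) = t_2 - t$, combined with $d(z, \phi_*(t_2)) = t_2 - s$. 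In essence, all relevant distances telescope along the metric segment $[p,q]$, so the verification reduces to extracting equality cases from chains of triangle inequalities -- routine but where the bookkeeping lives.
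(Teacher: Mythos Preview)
Your proof is correct and follows essentially the same Zorn's-lemma construction as the paper: build a maximal partial isometry from a subset of $[0,d(p,q)]$ into $M$, and show that any gap can be filled using a strict intermediate point supplied by the non-extremality assumption together with the Aliaga--Perneck\'a characterisation of extreme molecules. Your poset is in fact slightly cleaner than the paper's: you do not carry the extra requirement $(\psi(t),\psi(s))\in F_{p,q}$ in the definition, because (as you implicitly use) this follows automatically from $\psi$ being an isometry with $\psi(0)=p$ and $\psi(d(p,q))=q$; the paper builds it in and then invokes $f$ to check its stability under closures, which is unnecessary. Two minor remarks: your opening observation that $\hat f(m_{x,y})=1$ for all $(x,y)\in F_{p,q}$ is correct but never actually used afterwards (indeed your argument shows the lemma is a purely metric statement, independent of $f$); and before picking a gap $(t_1,t_2)$ with endpoints in $D_*$ you should note that maximality forces $D_*$ to be closed (otherwise the same completeness extension you use at the end would already contradict maximality).
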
	
	
	\begin{proof}
		First, note that $(p,q) \in F_{p,q}$ and so $F_{p,q}$ is not empty. Assume that $m_{x,y}$ is not an extreme point whenever $(x,y)\in F_{p,q}$. By \cite[Theorem 1.1]{ap}, for every $(x,y)\in F_{p,q}$ there is $z\in M$ such that $d(x,z)+d(z,y)=d(x,y)$. 
		
		The rest of the proof is just a small modification of the one of Proposition 4.1 in \cite{gprdauga}. Our aim is to show that there is an isometry $\phi\colon [0, d(p,q)] \longrightarrow M$ such that $\phi(0) =p$ and $\phi(d(p,q)) =q$. Consider the set $\mathcal A$ of all $(A, \psi)$, where $\{0,d(p,q)\}\subset A \subset [0, d(p,q)]$ is closed and $\psi \colon A \longrightarrow M$ is an isometry such that $\psi (0) =p$, $\psi(d(p,q)) =q$, and $(\psi(t),\psi(s))\in F_{p,q}$ for every $t$, $s\in A$ with $t<s$. Consider the following partial order ``$\leq$'' on $\mathcal A$: $(A, \psi) \leq (B, \xi)$ if $A \subset B$ and $\xi\restricted_A=\psi$. Clearly $\mathcal A\neq \emptyset$.
		
		We claim that every chain in $\mathcal A$ has an upper bound. Indeed, let $(A_i, \psi_i)_{i\in I}$ be a chain in $\mathcal A$. Take $A=\overline{\bigcup_{i\in I}A_i}$ and $\psi(x):=\psi_i(x)$ if $x\in A_i$. By completeness, we can extend $\psi$ uniquely to an isometry defined on $A$. Moreover, let $t,s\in A$, $t< s$. Then there are sequences $\{t_n\}, \{s_n\}$ in $\bigcup_{i\in I} A_i$ such that $t_n< s_n$, $t_n\to t$ and $s_n\to s$. Then
		\[\bigl|\hat{f}(m_{\psi(t), \psi(s)})\bigr| = \lim_n \bigl|\hat{f}( m_{\psi(t_n), \psi(s_n)})\bigr| = \Vert f\Vert_L\]
		since $(\psi(t_n), \psi(s_n))\in F_{p,q}$ for every $n$. Thus $(\psi(t), \psi(s))\in F_{p,q}$. This means that  $(A,\psi)\in \mathcal A$.
		
		Now, let $(A,\phi)$ be a maximal element in $\mathcal A$. Assume that there are $a,b\in A$, $a<b$ such that $(a,b)\cap A=\emptyset$. Since $(\phi(a), \phi(b))\in F_{p,q}$, we have that $m_{\phi(a),\phi(b)}$ is not an extreme point. Consequently, there is $z\in [\phi(a),\phi(b)]\setminus\{\phi(a), \phi(b)\}$. Then, we extend $\phi$ defining $\phi(a +d(\phi(a), z)) :=z$. Let us show that this map contradicts the maximality of $(A, \phi)$. It is clear that $\phi$ is still an isometry with $\phi(0)=p$ and $\phi(d(p,q))=q$. It remains to prove that $(\phi(t),\phi(s)) \in F_{p,q}$ for every $t \in A$ with $t<s$. Clearly, we may assume that either $\phi(s)=z$ or $\phi(t)=z$. Let's assume the first case holds, since the other one is similar. Since $t\in A$ and $t\leq \phi^{-1}(z)$, we have $t\leq a$. Then, we have that $\phi(a)\in[\phi(t),z]$  
		and it is clear that $\phi(b)\in[z,q]$.  
		Joining these two equalities we obtain
		\begin{multline*}
		 d(p,\phi(t))+d(\phi(t),z)+d(z,q) \\
		=d(p,\phi(t))+d(\phi(t),\phi(a))+d(\phi(a),z)+d(z,\phi(b))+d(\phi(b),q).
		\end{multline*}
		Recall that $z\in [\phi(a),\phi(b)]$ 
		and
		$\phi(a)\in[\phi(t), \phi(b)]$, so we have that
		\[
		d(p,\phi(t))+d(\phi(t),z)+d(z,q)=d(p,\phi(t))+d(\phi(t),\phi(b))+d(\phi(b),q)=d(p,q),\]
		since $(\phi(t),\phi(b)) \in F_{p,q}$. This means that $(\phi(t), z)\in F_{p,q}$.
	\end{proof}

\begin{lemma} \label{lemma:gamma} Let $M$ be a complete metric space. Let $\Gamma$ be a balanced subset of $S_{\mathcal F(M)}$ and denote $N_\Gamma(M)=\left\{f\in \Lip(M,\mathbb R)\colon \sup_{m\in\Gamma} |\hat{f}(m)|=\norm{f}_L\right\}$. Suppose that the set
	\[\bigl\{f\in \Lip(M,\mathbb R)\colon \hat{f}(m_{x,y})=\norm{f}_L \text{ for some } m_{x,y}\in \Mol{M}\cap \ext{B_{\mathcal F(M)}}\bigr\}\]
is contained in $N_\Gamma(M)$ and that $\SA(M,\mathbb R)$ is dense in $\Lip(M,\mathbb{R})$. Then, $$\SA(M,\mathbb R)\subset N_{\Gamma}(M)$$ and so, $B_{\mathcal F(M)} = \overline{\co}(\Gamma)$.	
\end{lemma}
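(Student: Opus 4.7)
The plan is to prove the inclusion $\SA(M,\mathbb R)\subseteq N_\Gamma(M)$; once this is available, the density of $\SA(M,\mathbb R)$ in $\Lip(M,\mathbb R)$ combined with the norm-closedness of $N_\Gamma(M)$ yields $\Lip(M,\mathbb R)=N_\Gamma(M)$, and then $B_{\mathcal F(M)}=\overline{\co}(\Gamma)$ follows from a Hahn-Banach separation argument using the balancedness of $\Gamma$ (which makes $\sup_{m\in\Gamma}\hat f(m)$ agree with $\sup_{m\in\Gamma}|\hat f(m)|$). The closedness of $N_\Gamma(M)$ is routine: if $f_n\to f$ with $f_n\in N_\Gamma(M)$ and $\varepsilon>0$, pick $m_n\in\Gamma$ with $|\hat f_n(m_n)|>\|f_n\|_L-\varepsilon$; then $|\hat f(m_n)|\geq|\hat f_n(m_n)|-\|f-f_n\|_L$, and passing to the limit gives $\sup_{m\in\Gamma}|\hat f(m)|\geq\|f\|_L-\varepsilon$.

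The core step is the inclusion $\SA(M,\mathbb R)\subseteq N_\Gamma(M)$, which I would handle via Lemma \ref{Lemma:dichext}. Take $f\in\SA(M,\mathbb R)$ with $\|f\|_L=1$ attaining at some $m_{p,q}$. In case (a) of Lemma \ref{Lemma:dichext} we obtain $(x,y)\in F_{p,q}$ with $m_{x,y}\in\ext{B_{\mathcal F(M)}}$, and the telescoping
\[
d(p,q)=f(p)-f(q)\leq(f(p)-f(x))+(f(x)-f(y))+(f(y)-f(q))\leq d(p,x)+d(x,y)+d(y,q)=d(p,q)
\]
forces every inequality to be an equality, so in particular $\hat f(m_{x,y})=1$. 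Since $m_{x,y}$ is extreme, the standing hypothesis yields $f\in N_\Gamma(M)$ immediately.

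The main obstacle is case (b) of Lemma \ref{Lemma:dichext}: there is an isometric embedding $\phi\colon[0,d(p,q)]\to M$ with $\phi(0)=p$ and $\phi(d(p,q))=q$, along which $f$ is affine of slope $\|f\|_L$, so every molecule $m_{\phi(s),\phi(t)}$ attains the norm but need not be extreme in $B_{\mathcal F(M)}$. My plan is to approximate $f$ in $\Lip(M,\mathbb R)$ by a sequence $(f_n)$ of Lipschitz maps each attaining their norm at an extreme molecule; then $f_n\in N_\Gamma(M)$ by the standing hypothesis and closedness forces $f\in N_\Gamma(M)$. To build the $f_n$, I would perturb $f$ by a small Lipschitz bump supported away from a neighbourhood of $\phi([0,d(p,q)])$ that strictly lowers the Lipschitz quotient along $\phi$ while leaving room for strong norm attainment off the segment, and then invoke the density of $\SA(M,\mathbb R)$ to replace the perturbation by a nearby element of $\SA(M,\mathbb R)$ whose attaining molecule lies off $\phi([0,d(p,q)])$; applying Lemma \ref{Lemma:dichext} to the approximant and iterating this perturbation to remove every residual isometric segment should drive the construction into case (a) in the limit. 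The main technical hurdle is to control the Lipschitz distance to $f$ and the locations of the norm-attaining molecules simultaneously, so that the sequence actually converges to $f$ while eventually eliminating every isometric segment.
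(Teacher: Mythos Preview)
Your reductions are correct: the closedness of $N_\Gamma(M)$, the Hahn--Banach step, and the handling of case (a) of Lemma~\ref{Lemma:dichext} are all fine and match the paper. The gap is entirely in case~(b), and there it is substantial.

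Your plan for case~(b) does not give a proof. First, the description of the perturbation is internally inconsistent: a bump supported away from $\phi([0,d(p,q)])$ cannot lower the Lipschitz quotient along $\phi$, since the values of $f$ on the segment are untouched and $f$ is affine of slope $\|f\|_L$ there. More seriously, even if you did manage to push the norm attainment off the segment, invoking density of $\SA(M,\mathbb R)$ only gives you a nearby $g\in\SA(M,\mathbb R)$; nothing prevents $g$ from again attaining on an isometric segment, and your ``iterate to remove every residual segment'' idea has no mechanism for termination or for keeping the approximants close to the original $f$. You are essentially assuming the conclusion you want (that $f$ can be approximated by functions attaining at extreme molecules) without supplying the construction.

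The paper's argument is by contradiction and hinges on a fat Cantor set, which is the missing idea. Starting from $h_1\in\SA(M,\mathbb R)\setminus N_\Gamma(M)$ with $\sup_\Gamma|\hat h_1|=1-\delta$, Lemma~\ref{Lemma:dichext} gives an isometric segment $\phi\colon[0,d(p,q)]\to M$. One then takes a fat Cantor set $C\subset[0,d(p,q)]$ with $\lambda(C)>(1-\delta)d(p,q)$, defines $h_2=-\int_0^{u(\cdot)}\chi_C$ (where $u$ is a McShane extension of $\phi^{-1}$), and sets $f=\tfrac12(h_1+h_2)$. This $f$ still has $\sup_\Gamma|\hat f|<\|f\|_L$, so one can choose $g\in\SA(M,\mathbb R)\setminus N_\Gamma(M)$ arbitrarily close to $f$. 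Applying Lemma~\ref{Lemma:dichext} to $g$ produces a new isometric segment, and the nowhere-dense nature of $C$ forces the existence of points $a,b$ on that segment with $[u(a),u(b)]\cap C=\emptyset$, hence $\hat h_2(m_{a,b})=0$; but the norm estimates simultaneously force $\hat h_2(m_{a,b})>1-\delta$, a contradiction. The Cantor set is what lets you exploit the tension between ``large on the whole segment'' and ``zero on arbitrarily placed subsegments'' --- that is the device your perturbation scheme lacks.
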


\begin{proof} Assume that $h_1\in \SA(M,\mathbb R)\setminus N_\Gamma(M)$, with $\norm{h_1}_L=1$. Take $0<\delta<1$ in such a way that $\sup_{m\in\Gamma} |\hat{h}_1(m)|=1-\delta$. Now, if $h_1$ strongly attains its norm at a molecule $m_{p,q}$, by applying Lemma~\ref{Lemma:dichext} and taking into account that $h_1$ does not attain its norm at any extreme molecule, we find an isometry $\phi \colon [0,d(p,q)] \longrightarrow M$ satisfying $\phi(0)=p$ and $\phi(d(p,q))=q$. Consider $u_0 \colon \phi([0,d(p,q)]) \longrightarrow [0,d(p,q)]$ its inverse map and an extension $u \colon M\longrightarrow [0,d(p,q)]$ of $u_0$ such that $\|u\|_L=1$. Note that such extension exists thanks to McShane extension theorem. On the other hand, let $C\subseteq [0,d(p,q)]$ be a fat Cantor set, that is, a measurable closed subset with $\lambda(C)>(1-\delta)d(p,q)$ such that for each nontrivial interval $I\subseteq[0,d(p,q)]$ there exists a nontrivial interval $J\subseteq I$ such that $J\cap C =\emptyset$. Let us consider $\varphi\colon[0,d(p,q)]\longrightarrow \mathbb{R}$ given by
	\[ \varphi(t)=-\int_0^t \chi_C(s)\,ds \quad \forall \, t \in [0,d(p,q)].\]
	We define $h_2 \colon M \longrightarrow\mathbb{R}$ by $h_2=\varphi \circ u$ and $f \colon M \longrightarrow \mathbb{R}$ by $f=\frac{1}{2}(h_1+h_2)$. It is clear that $$\|f\|_L\leq \frac{1}2(\|h_1\|_L+\|h_2\|_L) =1.$$ Moreover, \[ \|f\|_L\geq \hat{f}(m_{p,q})=\frac{1}{2}(\hat{h}_1(m_{p,q})+\hat{h}_2(m_{p,q}))= \frac{1}{2}\left (1+\frac{\lambda(C)}{d(p,q)}\right )>1-\frac{\delta}{2}.\]
	
	On the other hand,
	\[\sup_\Gamma \hat{f} \leq \frac{1}{2}(\sup_\Gamma \hat{h}_1+\sup_\Gamma \hat{h}_2) = \frac{1}{2}(2-\delta)=1-\frac{\delta}{2}.\]
	Therefore, $f\notin N_\Gamma(M)$. Take $\varepsilon>0$ such that $\norm{f}_L-\varepsilon>1-\frac{\delta}{2}$. Since $N_\Gamma(M)$ is closed and $\SA(M,\mathbb R)$ is dense there is $g\in \SA(M,\mathbb R)\setminus N_\Gamma(M)$ such that  $\|g\|_L=\|f\|_L$ and $\|f-g\|_L<\varepsilon$. Consider $m_{x,y} \in \Mol{M}$ for which $\hat{g}(m_{x,y})=\|g\|_L$. Since $g\notin N_\Gamma(M)$, we have that $\hat{g}$ does not attain its norm at any extreme molecule of $B_{\mathcal F(M)}$. In particular, by applying Lemma~\ref{Lemma:dichext} we obtain that there exists an isometry $\phi'\colon [0,d(x,y)] \longrightarrow M$ satisfying $\phi'(0)=x$ and $\phi'(d(x,y))=y$. Notice that
	\[\hat{h}_2(m_{x,y})\geq 2\hat{f}(m_{x,y})-1\geq 2(\hat{g}(m_{x,y})-\varepsilon)-1\geq 2(\norm{f}_L-\varepsilon)-1>1-\delta,\]
	 from where $u(x)\neq u(y)$. Hence, we can find different points $a$, $b$ of $\phi'([0,d(x,y)])\subseteq M$ such that $d(x,y)=d(x,a)+d(a,b)+d(b,y)$ and $[u(a),u(b)]\cap C=\emptyset$. Since $g$ attains its norm at $m_{x,y}$ it follows that $\hat{g}(m_{a,b})=\|g\|_L$ and so, $\hat{f}(m_{a,b})>\|f\|_L-\varepsilon$. As before, this implies that $\hat{h}_2(m_{a,b})>1-\delta$, whereas the fact that $[u(a),u(b)]\cap C=\emptyset$ implies that $\hat{h}_2(m_{a,b})=0$, leading to a contradiction.
	
	 This shows that $\SA(M,\mathbb R)\subset N_{\Gamma}(M)$. Now, Hahn-Banach theorem yields that $B_{\mathcal F(M)}=\overline{\co}(\Gamma)$.
\end{proof}

The first main result of the section is the following.

\begin{theorem}
\label{theorem:closedconvexhullextremepoints}
Let $M$ be a complete metric space. If $\SA(M,\R)$ is dense in $\Lip(M,\R)$, then
$$
B_{\F{M}}=\overline{\co}\Bigl(\ext{B_{\mathcal F(M)}}\cap \Mol{M}\Bigr).
$$
\end{theorem}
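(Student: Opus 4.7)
The plan is to reduce the theorem to a direct application of Lemma~\ref{lemma:gamma} with the natural choice
\[
\Gamma := \ext{B_{\F{M}}} \cap \Mol{M}.
\]
First I would verify that $\Gamma$ meets the structural hypotheses of the lemma. Every extreme point of $B_{\F{M}}$ lies on $S_{\F{M}}$, so $\Gamma \subset S_{\F{M}}$. The set $\Mol{M}$ is symmetric because $-m_{x,y} = m_{y,x}$, and $\ext{B_{\F{M}}}$ is symmetric since the unit ball is symmetric; hence $\Gamma$ is balanced in the sense used in the paper.

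Next I would check the key inclusion needed to invoke the lemma. Suppose $f \in \Lip(M,\R)$ satisfies $\hat{f}(m_{x,y}) = \|f\|_L$ for some $m_{x,y} \in \Mol{M} \cap \ext{B_{\F{M}}}$. Then $m_{x,y} \in \Gamma$, so
\[
\sup_{m \in \Gamma} |\hat{f}(m)| \;\geq\; |\hat{f}(m_{x,y})| \;=\; \|f\|_L,
\]
and the reverse inequality is automatic because $\Gamma \subseteq B_{\F{M}}$. Thus $f \in N_\Gamma(M)$, and the required inclusion holds.

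Since $\SA(M,\R)$ is dense in $\Lip(M,\R)$ by hypothesis, all hypotheses of Lemma~\ref{lemma:gamma} are satisfied. Its conclusion yields precisely
\[
B_{\F{M}} \;=\; \overline{\co}(\Gamma) \;=\; \overline{\co}\bigl(\ext{B_{\F{M}}} \cap \Mol{M}\bigr),
\]
which is the desired equality. The only real content of the argument is Lemma~\ref{lemma:gamma} itself (and Lemma~\ref{Lemma:dichext} feeding into it, which produces an isometric copy of a segment whenever no extreme molecule realises the norm); the present theorem is essentially its first corollary. Consequently there is no genuine obstacle left at this step: the work has been done in the two preceding lemmas, and the proof reduces to the verification sketched above.
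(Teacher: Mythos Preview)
Your proposal is correct and follows exactly the paper's approach: the paper's proof is the single line ``Apply Lemma~\ref{lemma:gamma} with $\Gamma= \Mol{M}\cap\ext{B_{\mathcal F(M)}}$'', and you have simply spelled out the (trivial) verification of its hypotheses.
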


\begin{proof} Apply Lemma \ref{lemma:gamma} with $\Gamma= \Mol{M}\cap\ext{B_{\mathcal F(M)}}$.
\end{proof}

We would like to observe that Theorem \ref{theorem:closedconvexhullextremepoints} somehow improves, in the case of Lipschitz-free spaces, a result by Lindenstrauss \cite{linds63}.

\begin{remark}\label{remark:Lidenstrauss-remark}
{\slshape Let $M$ be a complete metric space. If $\SA(M,Y)$ is dense in $\mathcal{L}(\F{M},Y)$ for \emph{some} Banach space $Y$, then}
$$
B_{\F{M}}=\overline{\co}\bigl(\ext{B_{\mathcal F(M)}}\bigr).
$$
Indeed, this follows from Theorem \ref{theorem:closedconvexhullextremepoints}, as the density of $\SA(M,Y)$ in $\Lip(M,Y)$ for some $Y$ implies the density of $\SA(M,\R)$ in $\Lip(M,\R)$ by \cite[Proposition~4.2]{cm}.

Compare this result with the following one by Lindenstrauss \cite[Theorem 2.i]{linds63}: if $X$ is a Banach space which admits a strictly convex renorming (for instance, if $X$ is separable) such that $\NA(X,Y)$ is dense in $\mathcal{L}(X,Y)$ for \emph{all} Banach spaces $Y$, then $B_X=\overline{\co}(\ext{B_{X}})$.
\end{remark}

The next result deals with strongly norm attaining vector-valued Lipschitz maps. In the case of real-valued maps, it improves Theorem \ref{theorem:closedconvexhullextremepoints} for metric spaces not containing isometric copies of the unit interval.

\begin{proposition}\label{prop:centraextremos}
Let $M$ be a complete metric space which does not contain any isometric copy of $[0,1]$ and let $Y$ be a Banach space. Then, $\SA(M,Y)$ coincides with the set
$$
\left\{f\in \Lip(M,Y)\colon \Vert \hat{f}(m_{x,y})\Vert =\|f\|_L \text{ for some } m_{x,y}\in \Mol{M}\cap \ext{B_{\mathcal F(M)}}\right\}.
$$
In particular, if $\SA(M,Y)$ is dense in $\Lip(M,Y)$, then so is the set
$$
\left\{T\in \mathcal{L}(\F{M},Y)\colon T \text{ attains its norm at some element of } \ext{B_{\mathcal F(M)}} \right\}
$$
in $\mathcal{L}(\F{M},Y)$.
\end{proposition}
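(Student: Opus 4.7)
My plan is to reduce to the real-valued case by Hahn--Banach and then invoke Lemma~\ref{Lemma:dichext}. The inclusion ``$\supseteq$'' would be immediate from the definition of $\SA(M,Y)$. For the non-trivial direction, I would fix $f\in \SA(M,Y)$ with $\|f\|_L=1$ and $p\neq q$ in $M$ satisfying $\|\hat f(m_{p,q})\|=1$. Choosing $y^*\in S_{Y^*}$ with $y^*\bigl(\hat f(m_{p,q})\bigr)=1$, the map $g:=y^*\circ f\in \Lip(M,\mathbb R)$ satisfies $\|g\|_L\leq \|f\|_L=1$ and $\hat g(m_{p,q})=1$, so $g$ strongly attains its (unit) norm at the same molecule $m_{p,q}$.

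Next I would apply Lemma~\ref{Lemma:dichext} to $g$ and the pair $(p,q)$. The second alternative in that dichotomy would produce an isometric embedding of a non-degenerate closed interval into $M$, which (in the sense of this paper) contradicts the hypothesis that $M$ contains no isometric copy of $[0,1]$. Hence the first alternative must hold: there is $(x,y)\in F_{p,q}$ with $m_{x,y}\in \ext{B_{\F M}}$. From the identity $g(p)-g(q)=d(p,q)=d(p,x)+d(x,y)+d(y,q)$ and the $1$-Lipschitz inequalities $|g(p)-g(x)|\leq d(p,x)$, $|g(x)-g(y)|\leq d(x,y)$, $|g(y)-g(q)|\leq d(y,q)$, the telescoping sum forces equality at each step; in particular $\hat g(m_{x,y})=1$. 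Consequently
\[
1=\hat g(m_{x,y})=y^*\bigl(\hat f(m_{x,y})\bigr)\leq \bigl\|\hat f(m_{x,y})\bigr\|\leq \|f\|_L=1,
\]
so $f$ attains its norm at the extreme molecule $m_{x,y}$, establishing the set equality.

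For the ``in particular'' assertion, I would simply transport the identity across the canonical isometric identification $f\leftrightarrow \hat f$ between $\Lip(M,Y)$ and $\mathcal{L}(\F{M},Y)$. Under this identification, the first part of the proposition says that the image of $\SA(M,Y)$ is contained in $\{T\in \mathcal{L}(\F{M},Y)\colon T \text{ attains its norm at some element of } \ext{B_{\F{M}}}\}$, so the density of $\SA(M,Y)$ in $\Lip(M,Y)$ transfers directly to density of this larger set in $\mathcal{L}(\F{M},Y)$.

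The bulk of the work is already hidden in Lemma~\ref{Lemma:dichext}, so the main subtlety I see is ensuring that the Hahn--Banach functional $y^*$ is chosen so that $y^*\circ f$ attains its norm at precisely the same molecule $m_{p,q}$ as $f$; that alignment is what makes the set $F_{p,q}$ coherent between the scalar and vector-valued settings and lets the extreme molecule produced for $g$ serve $f$ as well.
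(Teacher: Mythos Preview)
Your proof is correct and follows essentially the same route as the paper's: reduce to the scalar case via a Hahn--Banach functional $y^*$, apply Lemma~\ref{Lemma:dichext} to $y^*\circ f$, rule out the isometric-segment alternative by hypothesis, and conclude that $f$ attains its norm at the resulting extreme molecule. Your telescoping argument showing $\hat g(m_{x,y})=1$ from $(x,y)\in F_{p,q}$ is a detail the paper leaves implicit, so your write-up is in fact slightly more complete than the original.
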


\begin{proof}
Pick $f\in \SA(M,Y)$. Hence there exists $u,v\in M, u\neq v$ and $y^*\in S_{Y^*}$ such that $[y^*\circ \hat{f}](m_{u,v})=\Vert f\Vert_L$. Since $M$ does not contain any isometric copy of $[0,1]$, then Lemma \ref{Lemma:dichext} applies to get a molecule $m_{x,y}\in \ext{B_{\mathcal F(M)}}$ such that $\bigl[y^*\circ \hat{f}\,\bigr](m_{x,y})=\Vert f\Vert_L$. From here it is clear that $f$ strongly attains its norm at the pair $(x,y)$, and we are done.
\end{proof}

\begin{remark}
The assumption that $\SA(M,\mathbb R)$ is dense cannot be removed in Theorem \ref{theorem:closedconvexhullextremepoints}: {\slshape  let $M$ be a fat Cantor set in $[0,1]$, which clearly does not contain any isometric copy of $[0,1]$; then, $B_{\mathcal F(M)}\neq \overline{\co}\bigl(\ext{B_{\mathcal F(M)}}\bigr)$.}\  Indeed, it is known that $\mathcal F(M)\equiv L_1[0,1]\oplus_1 \ell_1$ \cite[pp.~4315]{godard} so, under that identification, any $(f,g)\in \ext{B_{\mathcal F(M)}}$ is actually of the form $(0,g)$, for certain $g\in \ext{B_{\ell_1}}$, and hence $\overline{\co}\bigl( \ext{B_{\mathcal F(M)}}\bigr) \subseteq \{0\}\oplus_1 \ell_1$. This reproves the fact that $\SA(M,\mathbb R)$ is not dense in $\Lip(M,\mathbb R)$ which follows from  \cite[Theorem 2.3]{ccgmr}.
\end{remark}

In the sequel we will obtain improvements of Theorem~\ref{theorem:closedconvexhullextremepoints} and Proposition \ref{prop:centraextremos} in the case of compact metric spaces obtaning, as a consequence, an affirmative answer to (Q3) in the case of compact metric spaces. To begin with, let us exhibit the second main result of the section.

\begin{theorem}\label{theo:centraestruct}
Let $M$ be a compact metric space which does not contain any isometric copy of $[0,1]$ and let $Y$ be a Banach space. Then, the following assertions are equivalent:
\begin{enumerate}
\item[(i)] $\SA(M,Y)$ is dense in $\Lip(M,Y)$.
\item[(ii)] The set of absolutely strongly exposing operators from $\mathcal F(M)$ to $Y$ is dense in $\mathcal L(\mathcal F(M),Y)$.
\item[(iii)] The set of non-local $Y$-valued Lipschitz maps is dense in $\Lip(M, Y)$.
\end{enumerate}
\end{theorem}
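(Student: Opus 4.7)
Plan. The natural approach is the cyclic chain $(\text{iii})\Rightarrow(\text{i})\Rightarrow(\text{ii})\Rightarrow(\text{iii})$. The implication $(\text{iii})\Rightarrow(\text{i})$ is immediate from Lemma~\ref{lemma:nolocalstrexp}: every non-local Lipschitz map on a compact space strongly attains its norm at a strongly exposed molecule, so $\SA(M,Y)$ contains the dense set of non-local maps. For $(\text{ii})\Rightarrow(\text{iii})$, assume $T=\hat f$ is absolutely strongly exposing at $x_0\in S_{\F{M}}$; then $x_0$ is strongly exposed and hence a molecule $m_{a,b}$ with $d(a,b)>0$. I would argue $f$ must be non-local by contradiction: otherwise one picks $(p_n,q_n)$ with $d(p_n,q_n)\to 0$ and $\|\hat f(m_{p_n,q_n})\|\to\|f\|_L$; the absolutely strongly exposing condition forces a subsequence $m_{p_n,q_n}\to \pm m_{a,b}$ in the norm of $\F{M}$, while compactness of $M$ yields $p_n,q_n\to z$. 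Depending on whether $z\notin\{a,b\}$ or $z\in\{a,b\}$, one exhibits a specific $1$-Lipschitz function $h$ (a cut-off near $\{a,b\}$ in the first case; one extracted from the failure of property~(Z) for $(a,b)$ in the second) on which $\hat h(m_{p_n,q_n})$ stays bounded away from $\hat h(\pm m_{a,b})$, contradicting the assumed norm convergence.

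The core step is $(\text{i})\Rightarrow(\text{ii})$. Fix $T\in\mathcal{L}(\F{M},Y)$ and $\varepsilon>0$, and use (i) to find $S\in\SA(M,Y)$ with $\|T-\hat S\|<\varepsilon/3$. By Proposition~\ref{prop:centraextremos}, $\hat S$ attains its norm at some extreme molecule; the next task is to perturb $S$ by less than $\varepsilon/3$ to obtain a \emph{non-local} map $S'$ still in $\SA(M,Y)$, so that Lemma~\ref{lemma:nolocalstrexp} places the attaining molecule of $\hat{S'}$ in $\strexp{B_{\F{M}}}$; call it $m_{a,b}$. Now one mimics Bourgain's classical perturbation: let $\phi\in S_{\F{M}^*}$ strongly expose $m_{a,b}$, pick $y^*\in S_{Y^*}$ norming $\hat{S'}(m_{a,b})$ and $v\in S_Y$ with $y^*(v)=1$, and define
\[
T':=\hat{S'}+\eta\,\phi\otimes v
\]
with $\eta>0$ small enough that $\|T-T'\|<\varepsilon$. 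A direct computation yields $\|T'\|=\|S'\|+\eta$, attained at $m_{a,b}$. Finally, if $(x_n)\subset B_{\F{M}}$ satisfies $\|T'x_n\|\to\|T'\|$, choose norming $z_n^*\in S_{Y^*}$ and expand
\[
z_n^*(T'x_n)=z_n^*(\hat{S'}x_n)+\eta\,z_n^*(v)\,\phi(x_n).
\]
The two summands are bounded above by $\|S'\|$ and $\eta$ respectively, so both must saturate in the limit; in particular $|\phi(x_n)|\to 1$, and the strongly exposing property of $\phi$ forces $x_n\to\pm m_{a,b}$. Hence $T'$ is absolutely strongly exposing, proving (ii).

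The main obstacle lies in the intermediate stage of $(\text{i})\Rightarrow(\text{ii})$: upgrading the assertion ``$\hat S$ attains at an extreme molecule'' to ``$\hat S$ attains at a strongly exposed molecule'', since in a compact space without isometric copies of $[0,1]$ an extreme molecule $m_{a,b}$ may still enjoy property~(Z) via asymptotic midpoints, and hence fail to be strongly exposed. Circumventing this requires an additional perturbation converting the norm-attaining map into a non-local one, at which point Lemma~\ref{lemma:nolocalstrexp} supplies the strongly exposed molecule that the Bourgain-style step needs. Once past this hurdle, the remaining arguments are essentially formal.
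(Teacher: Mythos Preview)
Your proposal is correct and relies on exactly the same four ingredients as the paper: Proposition~\ref{prop:centraextremos}, Lemma~\ref{lemma:approxnoloc}, Lemma~\ref{lemma:nolocalstrexp}, and the Bourgain-type perturbation (packaged in the paper as Proposition~\ref{prop:strongoperator}). The only substantive difference is the orientation of the cycle. The paper argues $(\text{i})\Rightarrow(\text{iii})\Rightarrow(\text{ii})\Rightarrow(\text{i})$: the first step is precisely your ``main obstacle'' (Proposition~\ref{prop:centraextremos} followed by Lemma~\ref{lemma:approxnoloc}), the second is Lemma~\ref{lemma:nolocalstrexp} plus Proposition~\ref{prop:strongoperator}, and the last is the one-line observation that an absolutely strongly exposing operator attains its norm at a strongly exposed point, hence at a molecule. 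Your route $(\text{iii})\Rightarrow(\text{i})\Rightarrow(\text{ii})\Rightarrow(\text{iii})$ works too, but it is slightly redundant: your long step $(\text{i})\Rightarrow(\text{ii})$ already contains the paper's $(\text{i})\Rightarrow(\text{iii})\Rightarrow(\text{ii})$ as substeps, after which $(\text{ii})\Rightarrow(\text{i})$ is free, so the separate direct arguments for $(\text{iii})\Rightarrow(\text{i})$ and $(\text{ii})\Rightarrow(\text{iii})$ are not needed. Incidentally, your $(\text{ii})\Rightarrow(\text{iii})$ can be shortened considerably: once a subsequence of $m_{p_n,q_n}$ converges in norm to $\pm m_{a,b}$, the inequality $\max\{d(a,p_n),d(b,q_n)\}\leq \|m_{a,b}-m_{p_n,q_n}\|\, d(a,b)$ from \cite[Lemma~1.3]{ccgmr} forces $d(p_n,q_n)\to d(a,b)>0$ directly, with no case split or auxiliary test functions required.
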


Before proving the result, let us present its main consequence, which follows immediately from the fact that the set of non-local $Y$-valued Lipschitz maps is an open set (indeed, if $f\in \Lip(M,Y)$ is a non-local Lipschitz map, we can find $\varepsilon>0$ such that $$\sup\limits_{0<d(x,y)<\varepsilon}\Vert \hat{f}(m_{x,y})\Vert<\|f\|_L-\varepsilon;$$ then, the whole $B(f,\frac{\varepsilon}{3})$ is made of non-local Lipschitz maps).

\begin{corollary}\label{coro:SNA-contains-open}
Let $M$ be a compact metric space which does not contain any isometric copy of $[0,1]$ and let $Y$ be a Banach space. If $\SA(M,Y)$ is dense in $\Lip(M,Y)$, then $\SA(M,Y)$ (and, in particular, $\NA(\F{M},Y)$) contains an open dense subset.
\end{corollary}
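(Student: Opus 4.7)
The plan is to exhibit an open dense subset of $\Lip(M,Y)$ that is contained in $\SA(M,Y)$. The natural candidate, suggested by the appearance of non-local maps in Theorem~\ref{theo:centraestruct}, is the set
\[
N := \bigl\{f\in \Lip(M,Y)\colon f \text{ is non-local}\bigr\}.
\]
Three ingredients together give the result. First, density of $N$ in $\Lip(M,Y)$ is exactly the implication (i)$\Rightarrow$(iii) of Theorem~\ref{theo:centraestruct}, which applies since $\SA(M,Y)$ is dense and $M$ contains no isometric copy of $[0,1]$. Second, $N$ is contained in $\SA(M,Y)$ by Lemma~\ref{lemma:nolocalstrexp}, where the compactness of $M$ is used in an essential way; in fact every $f\in N$ attains its norm at a strongly exposed molecule.

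The remaining step, and the only one not already on record in the paper, is to verify that $N$ is open in $\Lip(M,Y)$. I would argue as follows: fix $f\in N$ and choose $\varepsilon>0$ with $\|\hat f(m_{x,y})\|\leq \|f\|_L-\varepsilon$ for every pair $x\neq y$ with $d(x,y)<\varepsilon$. For any $g\in \Lip(M,Y)$ with $\|g-f\|_L<\varepsilon/3$, the inequality $|\,\|\hat g(m_{x,y})\|-\|\hat f(m_{x,y})\|\,|\leq \|g-f\|_L$ (valid because molecules have norm one in $\F{M}$) combined with $\|g\|_L\geq \|f\|_L-\varepsilon/3$ yields
\[
\|\hat g(m_{x,y})\|\leq \|f\|_L-\varepsilon+\tfrac{\varepsilon}{3}\leq \|g\|_L-\tfrac{\varepsilon}{3}
\]
whenever $0<d(x,y)<\varepsilon/3$. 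Hence $g$ is non-local with parameter $\varepsilon/3$, and the ball $B(f,\varepsilon/3)$ lies inside $N$.

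Putting the three ingredients together, $N$ is an open dense subset of $\Lip(M,Y)$ contained in $\SA(M,Y)$. The statement for $\NA(\F{M},Y)$ then follows automatically from the canonical isometric identification $f\leftrightarrow \hat f$ between $\Lip(M,Y)$ and $\mathcal L(\F{M},Y)$, under which $\SA(M,Y)$ is sent into $\NA(\F{M},Y)$; the image of $N$ is thus an open dense subset of $\mathcal L(\F{M},Y)$ contained in $\NA(\F{M},Y)$.

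No substantive obstacle arises in this corollary: all the depth sits inside Theorem~\ref{theo:centraestruct} (density of non-local maps) and Lemma~\ref{lemma:nolocalstrexp} (non-local plus compact implies strongly norm attaining). The only addition is the elementary but crucial observation that non-locality is a stable property under small Lipschitz perturbations, which is precisely the openness step above.
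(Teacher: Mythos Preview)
Your proposal is correct and follows essentially the same approach as the paper: the open dense subset is the set of non-local maps, whose density comes from Theorem~\ref{theo:centraestruct}, whose inclusion in $\SA(M,Y)$ comes from Lemma~\ref{lemma:nolocalstrexp}, and whose openness is established via the very same $\varepsilon/3$-perturbation argument that the paper sketches immediately after the corollary statement.
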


In the case when $\F{M}\equiv \ell_1$ or, more generally, when $\F{M}$ has property $\alpha$ witnessed by a set $\Gamma\subset S_{\mathcal F(M)}$, it is easy to see the result from the proof of \cite[Proposition 1.3.a]{schachermayer}: indeed, it is proved there that the set of those operators $T\colon \mathcal F(M)\longrightarrow Y$ such that $\sup_{y\in \Gamma\setminus\{\pm x\}} \norm{Ty}<\norm{T}=\norm{Tx}$ for some $x\in \Gamma$ is dense and, on the other hand, it is clearly open as $\overline{\co}(\Gamma)=B_{\mathcal{F}(M)}$.

A specially interesting particular case of Corollary \ref{coro:SNA-contains-open} is the one in which $\F{M}$ has the RNP. In this case, $M$ does not contain copies of $[0,1]$ (otherwise, $L_1[0,1]$ would be a subspace of $\F{M}$) and $\SA(M,Y)$ is dense in $\Lip(M,Y)$ by \cite[Proposition 7.4]{lppr}.

\begin{corollary}\label{coro:RNP-SNA-open}
Let $M$ be a compact metric space for which $\F{M}$ has the RNP. Then, for every Banach space $Y$, $\SA(M,Y)$ (and so $\NA(\F{M},Y)$) contains a dense open subset.
\end{corollary}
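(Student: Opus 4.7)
The plan is to reduce the statement directly to Corollary \ref{coro:SNA-contains-open}, which requires two ingredients: the density of $\SA(M,Y)$ in $\Lip(M,Y)$, and the absence of an isometric copy of $[0,1]$ inside $M$. Both follow essentially from the Radon--Nikod\'ym property of $\F{M}$ together with facts already collected in the excerpt.

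First, I would obtain density of $\SA(M,Y)$ immediately from property (P1), namely \cite[Proposition~7.4]{lppr}, which says that if $\mathcal{F}(M)$ has the RNP then $\SA(M,Y)$ is dense in $\Lip(M,Y)$ for every Banach space $Y$. No further work is needed here.

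Next, I would rule out isometric copies of $[0,1]$ in $M$. Suppose, to the contrary, that there is an isometric embedding $\iota\colon[0,1]\longrightarrow M$; its image $N:=\iota([0,1])$ is a compact, hence closed, subset of $M$ containing $0$ after a harmless change of basepoint. By the universal property of Lipschitz-free spaces (combined with the existence of a norm-one linear extension $\mathcal{F}(N)\longrightarrow\mathcal{F}(M)$ arising from the McShane extension of a $1$-Lipschitz retraction is \emph{not} needed; what one uses is that $\mathcal F(N)$ embeds isometrically into $\mathcal F(M)$ via $\delta_N(x)\mapsto\delta_M(x)$), the space $\mathcal{F}(N)$ sits isometrically inside $\mathcal{F}(M)$. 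But $\mathcal{F}([0,1])$ is (isometrically isomorphic to) $L_1[0,1]$ by Godard's theorem \cite[Theorem~3.2]{godard} (this identification is also recalled at the beginning of Section~2 of the paper), which is a classical example of a Banach space failing the RNP. Since the RNP is hereditary with respect to closed subspaces, $\mathcal{F}(M)$ cannot have the RNP, contradicting the hypothesis.

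Finally, having verified both hypotheses of Corollary \ref{coro:SNA-contains-open}, we conclude that $\SA(M,Y)$ contains an open dense subset, and therefore so does $\NA(\mathcal{F}(M),Y)$ under the natural identification $\Lip(M,Y)\equiv\mathcal{L}(\mathcal{F}(M),Y)$. The only place where care is needed is the isometric embedding $\mathcal{F}(N)\hookrightarrow\mathcal{F}(M)$; this is a standard fact, but it is the one nontrivial step beyond quoting earlier results, and it is what makes ``isometric copy of $[0,1]$'' the right notion rather than, say, bi-Lipschitz copy.
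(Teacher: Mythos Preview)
Your proof is correct and follows exactly the same route as the paper: invoke \cite[Proposition~7.4]{lppr} (item (P1)) for density, observe that an isometric copy of $[0,1]$ in $M$ would yield $L_1[0,1]\hookrightarrow\mathcal F(M)$ contradicting the RNP, and then apply Corollary~\ref{coro:SNA-contains-open}. The paper compresses this to a single sentence before stating the corollary, but the content is identical.
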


Compare the result above with the one by Bourgain \cite[Theorem~5]{bou77}: if $X$ is a Banach space with the RNP, then for every Banach space $Y$, $\NA(X,Y)$ contains a dense $G_\delta$ subset of $\mathcal{L}(X,Y)$. Actually, by the cited results of Bourgain \cite{bou77}, $\NA(X,\R)$ contains a dense $G_\delta$ subset of $X^*$ whenever $X$ has the RNP. Moreover, in this case, $X^*=\NA(X,\R)-\NA(X,\R)$ (see the proof of \cite[Proposition 2.23]{Band-Godefroy}, for instance). But this is far from implying that $\NA(X,\R)$ contains an open set. Let us comment that the result in Corollary \ref{coro:RNP-SNA-open} is somehow unexpected, even for functionals, as the following remark shows.

\begin{remark}
\label{remark:abidensoraro} {\slshape The presence of open subsets in the set of norm attaining operators or even functionals is a rare phenomenon.}
\begin{enumerate}
\item[(a)] If $X$ is a non-reflexive Banach space, then there always exists an equivalent renorming $\tilde{X}$ of $X$ such that $\NA(\tilde{X},\R)$ has empty interior \cite{AcoKadet}. Therefore, the RNP is not enough in general to get that the set of norm attaining operators (or even functionals) has non empty interior.
\item[(b)] Even for the Lipschitz-free norm, the hypothesis of density of strongly norm attaining Lipschitz functions is important to get that the set of norm attaining functionals has non-empty interior, as the following example shows: {\slshape For $M=[0,1]$, the norm interior of the set $\NA(\Free(M),\R)$ is empty.}\ Indeed, recall that $\Free([0,1])\equiv L_1[0,1]$ (c.f. e.g. \cite[Example 2.1]{Godefroy-survey-2015}). Now, the result follows immediately since $L_1[0,1]^*\setminus \NA(L_1[0,1],\mathbb R)$ is dense in $L_1[0,1]^*$ by \cite[Theorem~2.7]{AcoAizAronGar}.
\end{enumerate}
\end{remark}

On the other hand, we do not know whether the hypothesis that $M$ does not contain isometric copies of $[0,1]$ can be dropped in Theorem \ref{theo:centraestruct}. The only metric spaces $M$ which we know that contain $[0,1]$ and for which $\SA(M,\R)$ is dense in $\Lip(M,\R)$ are the ones given in Theorem \ref{theo:lluvia}. As a matter of facts, the three assertions of Theorem \ref{theo:centraestruct} and the thesis of Corollary \ref{coro:RNP-SNA-open} hold for them.

\begin{example}
{\slshape Let $\M_p$ be the metric spaces defined in Theorem \ref{theo:lluvia} which contain $[0,1]$ and satisfy that $\SA(\M_p,Y)$ is dense in $\Lip(\M_p,Y)$ for every Banach space $Y$ for $p=1,2$. Then, for every Banach space $Y$, the set of non-local $Y$-valued Lipschitz maps is dense in $\Lip(\M_p,Y)$ for $p=1,2$ by Remark \ref{remark:lluvianonlocal}. In particular, $\SA(\M_p,Y)$ contains a dense open set for $p=1,2$.}
\end{example}

Let now prove Theorem \ref{theo:centraestruct}. We need a number of preliminary results which could be of independent interest. First, we prove the abundance of non-local Lipschitz maps when the set of strongly norm attaining maps is dense, in the compact setting.

\begin{lemma}\label{lemma:approxnoloc}
Let $M$ be a compact metric space, let $Y$ be a Banach space, and $f\in S_{\Lip(M,Y)}$. Assume that there exists $m_{x,y}\in \ext{B_{\mathcal F(M)}}$ such that $\Vert \hat{f}(m_{x,y})\Vert =1$. Then, for every $\varepsilon>0$, there exists a non-local Lipschitz map $\phi:M\longrightarrow Y$ such that $\Vert f-\phi\Vert_L<\varepsilon$.
\end{lemma}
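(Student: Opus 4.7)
The plan is to construct $\phi$ as a scalar perturbation of $f$ along the unit direction $y_0 := \hat f(m_{x,y}) \in S_Y$: specifically, I would set $\phi := f + \varepsilon\, g \cdot y_0$ where $g : M \to \mathbb{R}$ is a Lipschitz function carefully chosen so that $\hat g(m_{x,y})$ strictly dominates $|\hat g|$ on nearby short molecules.

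First I would pick $y^* \in S_{Y^*}$ with $y^*(y_0)=1$ and set $h := y^*\circ f$, so that $\hat h(m_{x,y}) = 1 = \|h\|_L$. By Theorem~1.1 of \cite{ap} (invoked as in Lemma~\ref{Lemma:dichext}), extremality of $m_{x,y}$ is equivalent to $[x,y] = \{x,y\}$, and compactness of $M$ upgrades this to the quantitative rigidity: for each $\delta>0$ there exists $\beta(\delta)>0$ so that $d(x,z)+d(z,y)\geq d(x,y)+\beta(\delta)$ whenever $\min(d(x,z), d(y,z))\geq\delta$. This is the only use of the extreme-point hypothesis.

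For the scalar boost $g$, the natural candidate is $g(p):=d(p,y)-d(p,x)$, which is $2$-Lipschitz with $\hat g(m_{x,y})=2$. A key observation is that $\hat g(m_{u,v})=2$ forces the metric colinearity $d(x,u)+d(u,v)+d(v,y)=d(x,y)$, i.e.\ $u,v\in[x,y]$. If this pointwise rigidity is not enough near the cluster $\{x,y\}$, one truncates by setting $g(p):=\min(d(p,y),d(x,y)-t_0)-\min(d(p,x),d(x,y)-t_0)$ for a suitable $t_0>0$, which flattens $g$ near $x$ and $y$ while preserving a positive value $\hat g(m_{x,y})=2(1-t_0/d(x,y))$. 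Setting $\phi:=f+\varepsilon g\cdot y_0$, one has $\|\phi-f\|_L\leq \varepsilon\|g\|_L\leq 2\varepsilon$, and $\|\hat\phi(m_{x,y})\|_Y = 1+\varepsilon\hat g(m_{x,y})$, yielding $\|\phi\|_L=1+\varepsilon\hat g(m_{x,y})$ via the triangle inequality.

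The remaining and principal task is verifying non-locality: for $(u,v)$ with $d(u,v)$ small, $\|\hat\phi(m_{u,v})\|_Y\leq\|\hat f(m_{u,v})\|_Y+\varepsilon|\hat g(m_{u,v})|$, and one needs a uniform strict gap below $\|\phi\|_L$. The main obstacle will be handling short molecules whose endpoints cluster near $\{x,y\}$, where both $|\hat h|$ and $|\hat g|$ can a~priori approach their maxima; the rigidity of the first step is applied to show that any such simultaneous alignment would force asymptotic metric-colinearity $d(x,u_n)+d(u_n,v_n)+d(v_n,y)\to d(x,y)$, contradicting $[x,y]=\{x,y\}$ unless the sequence degenerates at $\{x,y\}$, in which case the truncation in $g$ suppresses the boost. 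A compactness-based subsequence extraction then closes the argument and produces the uniform non-locality gap.
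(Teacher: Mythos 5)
There is a genuine gap, and it sits at the heart of your non-locality verification. The ``key observation'' that $\hat g(m_{u,v})=2$ forces $u,v\in[x,y]$ is false for $g(p)=d(p,y)-d(p,x)$. Writing $\hat g(m_{u,v})=\frac{(d(u,y)-d(v,y))+(d(v,x)-d(u,x))}{d(u,v)}$, equality with $2$ only forces the two triangle inequalities $d(u,y)\leq d(u,v)+d(v,y)$ and $d(v,x)\leq d(v,u)+d(u,x)$ to become equalities, i.e.\ $v\in[u,y]$ and $u\in[x,v]$; this does \emph{not} imply $d(x,u)+d(u,v)+d(v,y)=d(x,y)$. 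A concrete four-point metric space: $d(x,y)=10$, $d(x,u)=5$, $d(x,v)=6$, $d(u,v)=1$, $d(u,y)=8$, $d(v,y)=7$ satisfies all triangle inequalities and has $[x,y]=\{x,y\}$ (so $m_{x,y}$ is extreme), yet $\hat g(m_{u,v})=2$, and even your truncated $g$ with $t_0=1$ gives $\hat g(m_{u,v})=2>1.8=\hat g(m_{x,y})$. So $|\hat g|$ can attain its maximum on molecules whose endpoints are bounded away from $\{x,y\}$ and from $[x,y]$, where the truncation does nothing; nothing in your argument excludes a sequence of such configurations with $d(u_n,v_n)\to 0$ on which $f$ also nearly attains its norm in the direction $y_0$, in which case $\phi$ would be local. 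Your quantitative rigidity statement (which is correct as stated) only controls single points $z$ relative to the pair $(x,y)$; it gives no control whatsoever over $\hat g$ on short molecules $m_{u,v}$ with both endpoints away from $x$ and $y$, which is exactly what the final step needs.

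The missing idea is that in a compact metric space every extreme molecule is automatically a \emph{denting point} of $B_{\F{M}}$ (extreme implies preserved extreme by \cite[Theorem 4.2]{ag}, and preserved extreme implies denting by \cite[Theorem 2.4]{lppr}). This furnishes a norm-one $h\in\Lip(M,\R)$ and $\beta>0$ such that the slice $\{m\in B_{\F{M}}\colon \hat h(m)>1-\beta\}$ contains $m_{x,y}$ and has diameter less than $\delta$; perturbing by $\varepsilon\,\hat h\otimes z$ for a suitable $z\in S_Y$ then forces every molecule $m_{u,v}$ at which $\phi$ nearly attains its norm to lie in that slice (up to sign), hence $\|m_{u,v}-m_{x,y}\|<\delta$, hence $\max\{d(x,u),d(y,v)\}<\delta d(x,y)$ and $d(u,v)>(1-2\delta)d(x,y)$. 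Your function $g=d(\cdot,y)-d(\cdot,x)$ simply lacks this uniform peaking property at $m_{x,y}$, and no purely metric argument starting from $[x,y]=\{x,y\}$ will produce it; the slice functional coming from dentability is the essential input.
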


\begin{proof} Since $\Vert \hat f(m_{x,y})\Vert=1$ then we can find $y^*\in S_{Y^*}$ such that $[y^*\circ \hat{f}](m_{x,y})=1$. By assumption, $m_{x,y}$ is an extreme point. Hence, by \cite[Theorem 4.2]{ag} it is a preserved extreme point or, equivalently by \cite[Theorem 2.4]{lppr}, $m_{x,y}$ is a denting point. Fix $0<\delta<\frac12$ and find a slice $S=S(B_{\mathcal F(M)},\hat{h},\beta)$ with $h\in S_{\Lip(M,\R)}$ and $\beta>0$, containing $m_{x,y}$ and such that $\diam(S)<\delta$. Select $z\in S_Y$ such that $y^*(z)\hat{h}(m_{x,y})>1-\beta$ and define
$$
\hat{\phi}:=\hat{f}+\varepsilon \hat{h}\otimes z,
$$
where $\hat h\otimes z(m_{u,v}):=\hat h(m_{u,v})z$ for every $u,v\in M, u\neq v$. It is clear that $\Vert f-\phi\Vert_L<\varepsilon$. Let us now prove that $\phi$ is not local. To begin with, notice that
$$
\Vert \phi\Vert_L\geq [y^*\circ \hat{f}](m_{x,y})+\varepsilon \hat{h}(m_{x,y})y^*(z)>1+\varepsilon(1-\beta).
$$
Now, given $u,v\in M, u\neq v$ such that $\Vert \hat{\phi}(m_{u,v})\Vert>1+\varepsilon(1-\beta),$ it follows that
$$1+\varepsilon(1-\beta)<\Vert \hat{f}(m_{u,v})\Vert +\varepsilon \vert \hat{h}(m_{u,v})\vert \leq 1+\varepsilon \vert \hat{h}(m_{u,v})\vert,$$
from where we get that $\hat{h}(m_{u,v})>1-\beta$ or $\hat{h}(-m_{u,v})=\hat{h}(m_{v,u})>1-\beta$. Assume that $\hat{h}(m_{u,v})>1-\beta$ (the other case runs similarly).  This implies that $m_{u,v}\in S$, hence $\Vert m_{u,v}-m_{x,y}\Vert<\delta$. Now, by using \cite[Lemma 1.3]{ccgmr} we obtain that
$$\frac{\max\{d(x,u),d(y,v)\}}{d(x,y)}\leq \Vert m_{x,y}-m_{u,v}\Vert<\delta,$$
so $\max\{d(x,u),d(y,v)\}<\delta d(x,y)$. Hence
$$d(u,v)\geq d(x,y)-d(x,u)-d(y,v)>(1-2\delta)d(x,y),$$
from where we deduce that $\phi$ does not approximate its Lipschitz constant at arbitrarily close points, as desired.
\end{proof}

Next, we also need the following lemma, whose proof is encoded in \cite[Proposition 2.8.b]{ikw} for the real-valued case.

\begin{lemma}\label{lemma:nolocalstrexp}
Let $M$ be a compact metric space, let $Y$ be a Banach space and let $f\in S_{\Lip(M,Y)}$ be a non-local Lipschitz map. Then, there exists a strongly exposed point $m_{x,y}\in \mathcal F(M)$ such that $\Vert \hat{f}(m_{x,y})\Vert =1$.
\end{lemma}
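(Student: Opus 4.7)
The plan is to combine non-locality, compactness of $M$, and the characterisation of strongly exposed molecules via failure of property~(Z) in \cite[Theorem 5.4]{gprdauga}. The strategy has three steps: first reduce to a real-valued function via Hahn--Banach; then use non-locality and compactness to select a norm-attaining pair $(x_0,y_0)$ with \emph{minimal} distance among attaining pairs bounded away from zero; and finally check that $(x_0,y_0)$ fails property~(Z).

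I would start by extracting from non-locality a constant $\varepsilon_0>0$ such that $\Vert \hat{f}(m_{u,v})\Vert\leq 1-\varepsilon_0$ whenever $0<d(u,v)<\varepsilon_0$. Taking sequences $(x_n,y_n)$ with $\Vert \hat{f}(m_{x_n,y_n})\Vert\to 1$, compactness of $M$ together with the choice of $\varepsilon_0$ produces a limit pair $(x,y)$ with $d(x,y)\geq \varepsilon_0$ and $\Vert \hat{f}(m_{x,y})\Vert=1$, so the compact set
$$K:=\bigl\{(u,v)\in M\times M \colon d(u,v)\geq \varepsilon_0,\ \Vert \hat{f}(m_{u,v})\Vert=1\bigr\}$$
is non-empty. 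I would then pick $(x_0,y_0)\in K$ minimising the continuous function $d$, set $d_0:=d(x_0,y_0)\geq\varepsilon_0$, choose $y^*\in S_{Y^*}$ with $y^*\bigl(\hat f(m_{x_0,y_0})\bigr)=1$, and let $g:=y^*\circ f\in \Lip(M,\R)$. Then $\Vert g\Vert_L=1$, $\hat g(m_{x_0,y_0})=1$, and $g$ inherits non-locality from $f$ via $|\hat{g}(m_{u,v})|\leq \Vert\hat{f}(m_{u,v})\Vert$.

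To verify that $(x_0,y_0)$ fails property~(Z), I would argue by contradiction: suppose $\{z_n\}\subset M\setminus\{x_0,y_0\}$ satisfies $(x_0,y_0)_{z_n}<\tfrac{1}{n}\min\{d(x_0,z_n),d(y_0,z_n)\}$ and pass to a convergent subsequence $z_n\to z_0$. If $z_0\notin\{x_0,y_0\}$, then $(x_0,y_0)_{z_0}=0$ forces $z_0\in[x_0,y_0]\setminus\{x_0,y_0\}$, and combined with $\hat g(m_{x_0,y_0})=1$ and the 1-Lipschitzianity of $g$, the triangle equality yields $\hat g(m_{x_0,z_0})=1$, hence $\Vert\hat{f}(m_{x_0,z_0})\Vert=1$. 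As $0<d(x_0,z_0)<d_0$, this contradicts either the minimality of $(x_0,y_0)$ in $K$ (if $d(x_0,z_0)\geq\varepsilon_0$) or the non-locality of $f$ (if $d(x_0,z_0)<\varepsilon_0$). If instead $z_n\to x_0$ (the case $z_n\to y_0$ being symmetric), writing $\alpha_n=d(x_0,z_n)\to 0$, a short computation using $\hat g(m_{x_0,y_0})=1$, the 1-Lipschitzianity of $g$, and the defining Gromov-product inequality yields $\hat g(m_{x_0,z_n})>1-\tfrac{2}{n}$, which contradicts the non-locality of $g$ for $n$ large enough that $\alpha_n<\varepsilon_0$.

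The main obstacle compared with the real-valued case \cite[Proposition 2.8.b]{ikw} will be the passage from a vector-valued $f$ to a scalar witness of norm attainment; I expect the Hahn--Banach reduction to be clean precisely because the inequality $|\hat{g}(m_{u,v})|\leq \Vert \hat{f}(m_{u,v})\Vert$ transports non-locality from $f$ to $g$, after which the whole analysis can be run with the real-valued function $g$ exactly as in the scalar setting.
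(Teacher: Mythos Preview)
Your proof is correct and follows essentially the same approach as the paper's: both select, via compactness and non-locality, a norm-attaining pair of minimal distance, pass to a scalar function through a Hahn--Banach functional, and show by contradiction that the pair fails property~(Z) by splitting on whether the limit of the $z_n$'s coincides with one of the endpoints. The only cosmetic differences are your explicit introduction of the set $K$ and your slightly different case split (the paper first arranges $d(z_n,x)\leq d(z_n,y)$ and then distinguishes $z\neq x$ from $z=x$, obtaining $1-\tfrac{1}{n}$ instead of your $1-\tfrac{2}{n}$).
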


\begin{proof}
Since $f$ is not local, then an easy compactness argument yields that we can find a pair of different points $x,y\in M$ such that not only $\Vert \hat{f}(m_{x,y})\Vert =1$, but also if $0<d(u,v)<d(x,y)$ then $\Vert \hat{f}(m_{u,v})\Vert <1$. We claim that the pair $(x,y)$ fails property (Z). Indeed, assume by contradiction that $(x,y)$ has property (Z). Pick $y^*\in S_{Y^*}$ such that $[y^*\circ \hat{f}](m_{x,y})=1$. Then, for every $n\in\mathbb N$, there exists a point $z_n\in M\setminus\{x,y\}$ satisfying that
    $$d(x,z_n)+d(y,z_n)\leq d(x,y)+\frac{1}{n}\min\{d(x,z_n),d(y,z_n)\}.$$
Up to taking a subsequence, we may assume that $d(z_n,x)\leq d(z_n,y)$ for every $n\in\mathbb N$. Also, up to taking a further subsequence, we may assume by compactness that $\{z_n\}\longrightarrow z\in M$.
Now, we have two possibilities:
\begin{itemize}
    \item If $x\neq z$ then it is clear that $d(x,z)+d(y,z)=d(x,y)$, which implies that $[y^*\circ \hat{f}](m_{x,z})=1$ and, in particular, $f$ strongly attains its norm at the pair $(x,z)$. However, notice that
    $$d(x,z)\leq \frac{1}{2}(d(x,z)+d(y,z))=\frac{1}{2}d(x,y),$$
    which contradicts the minimality condition on $d(x,y)$.
    \item If $x=z$, then
    \begin{align*}
        \Vert \hat{f}(m_{x,z_n})\Vert &\geq [y^*\circ \hat{f}](m_{x,z_n})\\ &=[y^*\circ \hat{f}](m_{x,y})\frac{d(x,y)}{d(x,z_n)}-[y^*\circ \hat{f}](m_{z_n,y})\frac{d(z_n,y)}{d(x,z_n)}\\
        & \geq \frac{d(x,y)-d(z_n,y)}{d(x,z_n)}\geq  1-\frac{1}{n},
    \end{align*}
which entails a contradiction with the assumption that $f$ is not local.
\end{itemize}
Consequently, we get that the pair $(x,y)$ fails property (Z), so $m_{x,y}$ is a strongly exposed point by \cite[Theorem 5.4]{gprdauga}.
\end{proof}

The last preliminary result we present on the way to proving Theorem \ref{theo:centraestruct} deals with norm attaining operators on general Banach spaces.

\begin{proposition}\label{prop:strongoperator}
Let $X$ and $Y$ be Banach spaces. The following assertions are equivalent:
\begin{enumerate}
    \item\label{prop:strongoperator1} The set $\{T\in \mathcal{L}(X,Y)\colon  T\text{ attains its norm at a strongly exposed point}\}$ is dense in $\mathcal{L}(X,Y)$.
    \item\label{prop:strongoperator2} The set $\{T\in \mathcal{L}(X,Y)\colon  T\text{ is absolutely strongly exposing operator}\}$ is dense in $\mathcal{L}(X,Y)$.
\end{enumerate}
\end{proposition}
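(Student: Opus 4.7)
The plan is to prove the two implications by contrasting techniques: (ii) $\Rightarrow$ (i) will follow directly from unpacking the definitions, while (i) $\Rightarrow$ (ii) will require a rank-one perturbation argument in the spirit of classical Lindenstrauss-type constructions.

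For (ii) $\Rightarrow$ (i), I would show that every absolutely strongly exposing operator automatically attains its norm at a strongly exposed point, so (ii) implies (i) without any approximation. Given such $T$ with associated vector $x \in S_X$ and $\|T\| > 0$ (the case $T=0$ being trivial or vacuous), I would pick $y^* \in S_{Y^*}$ with $y^*(Tx) = \|T\|$ and consider the functional $f := (y^* \circ T)/\|T\| \in S_{X^*}$, which satisfies $f(x)=1$. If $\{x_n\} \subset B_X$ satisfies $f(x_n) \to 1$, then $y^*(Tx_n) \to \|T\|$, hence $\|Tx_n\| \to \|T\|$; by hypothesis some subsequence converges to $x$ or $-x$, but $f(x_n) \to 1$ excludes the limit $-x$, and applying the same reasoning to every subsequence forces $x_n \to x$. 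Thus $x$ is strongly exposed by $f$.

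For the converse (i) $\Rightarrow$ (ii), I would approximate by a rank-one perturbation. Given $T \in \mathcal{L}(X,Y)$ and $\varepsilon > 0$, invoke (i) to obtain $T_0$ with $\|T-T_0\| < \varepsilon/2$ attaining its norm at some strongly exposed $x \in S_X$, with associated strongly exposing functional $f \in S_{X^*}$. Assuming $T_0 \neq 0$ (the null case is an easy adaptation with any $y_0 \in S_Y$), set $y_0 := T_0 x / \|T_0 x\| \in S_Y$ and consider
$$
T_1 := T_0 + \frac{\varepsilon}{2}\, f \otimes y_0, \qquad (f \otimes y_0)(u) := f(u)\, y_0.
$$
Then $\|T_1-T\| < \varepsilon$ and $T_1 x = (\|T_0\| + \varepsilon/2)\, y_0$, so $\|T_1\| = \|T_0\| + \varepsilon/2$ and $T_1$ attains its norm at $x$.

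The main (and essentially only) obstacle is verifying that $T_1$ is absolutely strongly exposing at $x$. If $\{x_n\} \subset B_X$ satisfies $\|T_1 x_n\| \to \|T_1\|$, the estimate
$$
\|T_1 x_n\| \leq \|T_0 x_n\| + \frac{\varepsilon}{2}|f(x_n)| \leq \|T_0\| + \frac{\varepsilon}{2}
$$
forces $|f(x_n)| \to 1$. Passing to subsequences along which $f(x_n) \to 1$ or $f(x_n) \to -1$ and invoking the strong exposure of $x$ by $f$ then yields convergence to $x$ or $-x$ respectively, establishing the absolute strong exposing property. The key insight is that the rank-one perturbation in the direction of the exposing data $(f, y_0)$ simultaneously boosts the norm along $x$ by exactly $\varepsilon/2$ and forces any almost-norming sequence for $T_1$ to be almost-norming for $f$, thereby promoting the strong exposure of $x$ into the absolute strong exposing property of $T_1$.
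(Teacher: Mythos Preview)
Your proof is correct and follows essentially the same approach as the paper: the same functional $f = (y^*\circ T)/\|T\|$ for (ii)$\Rightarrow$(i), and the same rank-one perturbation $T_0 + \frac{\varepsilon}{2}\, f\otimes y_0$ for (i)$\Rightarrow$(ii). You are in fact slightly more careful than the paper in handling the trivial case $T=0$ and in explicitly arguing that \emph{every} subsequence has a further subsequence converging to $x$, hence $x_n\to x$.
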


\begin{proof}
 \eqref{prop:strongoperator2} $\Rightarrow$ \eqref{prop:strongoperator1}. Pick an absolutely strongly exposing operator $T$ for $x\in S_X$, and let us prove that $x$ is strongly exposed. Let $y^*\in S_{Y^*}$ such that $y^*(Tx)=\|T\|$ and consider $x^*\in S_{X^*}$ such that $\|T\|x^*=T^*(y^*)$. If $\{x_n\}$ is a sequence in $B_X$ such that $x^*(x_n)\longrightarrow 1=x^*(x)$, then $$\|T(x_n)\|\geq y^*(Tx_n)=\|T\|x^*(x_n)\longrightarrow \|T\|,$$ so there is a subsequence $\{x_{n_k}\}$ converging to $x$ (it cannot converge to $-x$), showing that $x$ is strongly exposed by $x^*$.

 \eqref{prop:strongoperator1} $\Rightarrow$ \eqref{prop:strongoperator2}. Pick an operator $T\in \mathcal{L}(X,Y)$ which attains its norm at a strongly exposed point $x$, and let us find an absolutely strongly exposing operator $S$ such that $\Vert T-S\Vert<\varepsilon$. For this, pick a strongly exposing functional $f_{x}$ for $x$. Define
    $$S:=T+\varepsilon f_x\otimes T(x).$$
Note that $\Vert S-T\Vert<\varepsilon$ is obvious. Let us prove that $S$ is absolutely strongly exposing. To this end, it is clear that $\Vert S\Vert\leq 1+\varepsilon$. Also, we get that
$$1+\varepsilon=(1+\varepsilon)\Vert T(x)\Vert =\Vert S(x)\Vert,$$
from where $\Vert S\Vert=1+\varepsilon$. Pick a sequence $\{x_n\}\in S_X$ such that $\Vert S(x_n)\Vert\longrightarrow 1+\varepsilon$. Since $S=T+\varepsilon f_x\otimes T(x)$ this implies that $\vert f_x(x_n)\vert\longrightarrow 1$ from where we can find a subsequence $\{x_{n_k}\}$ such that $f_x(x_{n_k})\longrightarrow 1$ or $f_x(x_{n_k})\longrightarrow -1$. Making use of the fact that $f_x$ strongly exposes $x$, we get that $\{x_{n_k}\}\longrightarrow x$ or $\{x_{n_k}\}\longrightarrow -x$. By definition, $S$ is an absolutely strongly exposing operator, so we are done.
\end{proof}

We are now able to present the pending proof.

\begin{proof}[Proof of Theorem \ref{theo:centraestruct}]
(i)$\Rightarrow$(iii) follows from Proposition \ref{prop:centraextremos} and Lemma \ref{lemma:approxnoloc}. (iii)$\Rightarrow$(ii) follows by Lemma \ref{lemma:nolocalstrexp} and Proposition \ref{prop:strongoperator}. Finally, (ii)$\Rightarrow$(i) follows from the fact that every absolutely strongly exposing operator attains its norm at a strongly exposed point, so at a molecule of $\mathcal F(M)$.
\end{proof}

As a consequence of the techniques involved in the proofs of Theorems \ref{theorem:closedconvexhullextremepoints} and \ref{theo:centraestruct}, we get the following result, which improves Theorem~\ref{theorem:closedconvexhullextremepoints} in the compact case, and also provides an affirmative answer to (Q3) in this case.

\begin{theorem}		\label{theorem:clcvstrepcompact}
		Let $M$ be a compact metric space. If $\SA(M,\R)$ is dense in $\Lip(M,\R)$, then
		$$
		B_{\F{M}}=\overline{\co}\bigl(\strexp{B_{\mathcal F(M)}}\bigr).
		$$
	\end{theorem}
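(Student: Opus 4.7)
The plan is to apply Lemma \ref{lemma:gamma} with the balanced subset
\[
\Gamma := \strexp{B_{\F{M}}} \subset S_{\F{M}},
\]
which is balanced since whenever a functional $f$ strongly exposes $x$, $-f$ strongly exposes $-x$. The conclusion of Lemma \ref{lemma:gamma} is then exactly $B_{\F{M}} = \overline{\co}\bigl(\strexp{B_{\F{M}}}\bigr)$, so the only thing to verify is the hypothesis: every $f \in \Lip(M,\R)$ which attains its norm at some extreme molecule $m_{x,y} \in \Mol{M} \cap \ext{B_{\F{M}}}$ must satisfy $\sup_{m\in\Gamma}|\hat{f}(m)| = \|f\|_L$.

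To check this, fix such an $f$ with $\|f\|_L = 1$ and, replacing $m_{x,y}$ by $-m_{x,y}$ if necessary, assume $\hat{f}(m_{x,y}) = 1$. For each $\varepsilon \in (0,1)$, Lemma \ref{lemma:approxnoloc} (applied in the scalar case $Y=\R$) yields a non-local Lipschitz map $\phi_\varepsilon \in \Lip(M,\R)$ with $\|f - \phi_\varepsilon\|_L < \varepsilon$; in particular $\|\phi_\varepsilon\|_L > 1-\varepsilon > 0$. The normalized map $\phi_\varepsilon / \|\phi_\varepsilon\|_L$ is still non-local, so Lemma \ref{lemma:nolocalstrexp} provides a strongly exposed molecule $m_\varepsilon \in \Gamma$ with $|\hat{\phi}_\varepsilon(m_\varepsilon)| = \|\phi_\varepsilon\|_L$. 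Hence
\[
|\hat{f}(m_\varepsilon)| \geq |\hat{\phi}_\varepsilon(m_\varepsilon)| - \|f - \phi_\varepsilon\|_L = \|\phi_\varepsilon\|_L - \|f - \phi_\varepsilon\|_L \geq 1 - 2\varepsilon.
\]
Letting $\varepsilon \to 0^+$ gives $\sup_{m\in\Gamma}|\hat{f}(m)| = 1 = \|f\|_L$, as required. Lemma \ref{lemma:gamma} (whose other hypothesis, the density of $\SA(M,\R)$ in $\Lip(M,\R)$, is our standing assumption) then delivers the desired identity.

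The argument is short because the two key preliminary lemmas do the heavy lifting, and both crucially use the compactness of $M$: Lemma \ref{lemma:approxnoloc} upgrades an extreme molecule to a denting point via \cite[Theorem 4.2]{ag} and \cite[Theorem 2.4]{lppr} in order to perturb $f$ to a non-local map, while Lemma \ref{lemma:nolocalstrexp} uses compactness to extract a minimal pair failing property (Z). The only potential pitfall is a logical one: Lemma \ref{lemma:gamma}'s hypothesis involves \emph{extreme} molecules whereas we are taking $\Gamma$ to be the (possibly smaller) set of strongly exposed points, so one must ensure that the norm-attainment at an extreme molecule still forces the supremum over strongly exposed points to reach $\|f\|_L$; this is precisely what the chain Lemma \ref{lemma:approxnoloc} $\to$ Lemma \ref{lemma:nolocalstrexp} achieves.
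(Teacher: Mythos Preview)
Your proof is correct and follows exactly the paper's approach: take $\Gamma=\strexp{B_{\F{M}}}$, use Lemma~\ref{lemma:approxnoloc} followed by Lemma~\ref{lemma:nolocalstrexp} to verify the containment hypothesis of Lemma~\ref{lemma:gamma}, and conclude. The only difference is that you spell out the $\varepsilon$-approximation argument in detail (including the normalization needed to invoke Lemma~\ref{lemma:nolocalstrexp}), whereas the paper compresses this into a single sentence.
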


\begin{proof} Let $\Gamma = \strexp{B_{\mathcal F(M)}}$. Assume that $f\in \Lip(M,\R)$ is such that $\hat{f}$ attains its norm at an element of $\Mol{M}\cap \ext{B_{\mathcal F(M)}}$.
By Lemmata \ref{lemma:approxnoloc} and \ref{lemma:nolocalstrexp}, $\hat{f}$ can be approximated by elements in $\mathcal{L}(\mathcal{F}(M),\mathbb R)$ attaining their norms on $\Gamma$. Therefore, $\sup_{m\in\Gamma} |\hat{f}(m)| = \norm{f}_L$. Now, Lemma \ref{lemma:gamma} gives that $B_{\mathcal F(M)}= \overline{\co}(\Gamma)$, as desired.
\end{proof}

We would like to observe that this result somehow improves, in the case of Lipschitz-free spaces on compact metric spaces, another result by Lindenstrauss \cite{linds63}.

\begin{remark}\label{remark:Lidenstrauss-remark2}
{\slshape Let $M$ be a compact metric space. If $\SA(M,Y)$ is dense in $\mathcal{L}(\F{M},Y)$ for \emph{some} Banach space $Y$, then}
$$
B_{\F{M}}=\overline{\co}\bigl(\strexp{B_{\mathcal F(M)}}\bigr).
$$
Indeed, this follows from Theorem \ref{theorem:clcvstrepcompact}, as the density of $\SA(M,Y)$ in $\Lip(M,Y)$ for some $Y$ implies the density of $\SA(M,\R)$ in $\Lip(M,\R)$ by \cite[Proposition~4.2]{cm}.

Compare this result with the following one by Lindenstrauss \cite[Theorem 2.ii]{linds63}: if $X$ is a Banach space which admits a LUR renorming (for instance, if $X$ is separable) such that $\NA(X,Y)$ is dense in $\mathcal{L}(X,Y)$ for \emph{all} Banach spaces $Y$, then $B_X=\overline{\co}(\strexp{B_{X}})$.
\end{remark}

\begin{remark}
In \cite[Problem 6.7]{Godefroy-survey-2015} it is proposed to study for which compact metric spaces $M$ and Banach spaces $Y$ one has that $\SA(M,Y)$ is dense in $\Lip(M,Y)$. Note that a necessary condition is that $B_{\mathcal F(M)}=\overline{\co}\bigl(\strexp{B_{\mathcal F(M)}}\bigr)$, according to the previous remark. Note, however, that this is not a sufficient condition, as the metric space $\mathbb T$ shows, see Theorem \ref{theo:toro}.
\end{remark}

Notice that techniques similar to those of Lemma \ref{lemma:approxnoloc} can be used in the locally compact case to get the following result.

\begin{proposition}\label{prop:denseopen-locallycompact}
Let $M$ be a locally compact metric space and let $Y$ be a Banach space. Then, the following assertions are equivalent:
\begin{enumerate}
\item[(i)] The set $\bigl\{f\in \Lip(M,Y)\colon \hat{f}\text{ attains its norm at a denting point}\bigr\}$ is dense in $\Lip(M,Y)$.
\item[(ii)] The set of absolutely strongly exposing operators from $\mathcal F(M)$ to $Y$ is dense in $\mathcal{L}(\F{M},Y)$.
\item[(iii)] $\SA(M,Y)$ contains the open dense set $B$ of the Lipschitz maps $f\colon M\longrightarrow Y$ with the following property: there are $\eta>0$, $x,y\in M$ with $x\neq y$ and $r>0$ such that
\begin{itemize}
\item $B(x,r)$ and $B(y,r)$ are compact and disjoint, and,
\item $\Vert \hat{f}(m_{u,v})\Vert\leq \Vert f\Vert_L-\eta$ if $(u,v)\notin (B(x,r)\times B(y,r))\cup (B(y,r)\times B(x,r))$.
\end{itemize}
\end{enumerate}
In particular, in such a case, $B_{\mathcal F(M)}=\overline{\co}\bigl(\strexp{B_{\mathcal F(M)}}\bigr)$.
\end{proposition}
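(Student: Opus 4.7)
I would establish the equivalences through the cycle $(ii) \Rightarrow (i) \Rightarrow (iii) \Rightarrow (ii)$ and then deduce the ``in particular'' assertion. The implication $(ii) \Rightarrow (i)$ is immediate from the proof of Proposition~\ref{prop:strongoperator}: any absolutely strongly exposing operator $T \in \mathcal L(\mathcal F(M), Y)$ attains its norm at a strongly exposed point of $B_{\mathcal F(M)}$, which is automatically a molecule and, in particular, a denting point.

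For $(i) \Rightarrow (iii)$, I would adapt the perturbation from Lemma~\ref{lemma:approxnoloc}. Given $f \in \Lip(M,Y)$ whose $\hat f$ attains its norm at a denting molecule $m_{x,y}$, I fix $y^* \in S_{Y^*}$ with $y^*(\hat f(m_{x,y})) = \|f\|_L$, pick a slice $S(B_{\mathcal F(M)}, \hat h, \beta)$ containing $m_{x,y}$ of diameter less than $\delta$, and choose $z \in S_Y$ with $y^*(z) \hat h(m_{x,y}) > 1 - \beta$. Setting $\phi := f + \varepsilon\, h \otimes z$, the argument of Lemma~\ref{lemma:approxnoloc} yields $\|\phi\|_L > 1 + \varepsilon(1-\beta)$, and, moreover, if $\|\hat\phi(m_{u,v})\|$ exceeds a threshold $c > 1 + \varepsilon(1-\beta)$ then one of $\pm m_{u,v}$ lies in the slice. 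By \cite[Lemma 1.3]{ccgmr} this forces the pair $(u, v)$, in the appropriate order, into $B(x, \delta d(x,y)) \times B(y, \delta d(x,y))$. Local compactness of $M$ allows me to shrink $\delta$ so that $r := \delta d(x,y)$ makes $B(x,r)$ and $B(y,r)$ compact and disjoint; setting $\eta := \|\phi\|_L - c > 0$ gives $\phi \in B$, whence $B$ is dense. Openness of $B$ is routine: if $g$ differs from an element of $B$ by less than $\eta/4$ in Lipschitz norm, then $g \in B$ with $\eta$ replaced by $\eta/2$ and the same $(x, y, r)$. Finally, the inclusion $B \subseteq \SA(M, Y)$ is a direct compactness argument: any sequence $\{m_{u_n, v_n}\}$ approximating $\|f\|_L$ lies eventually in $B(x, r) \times B(y, r)$ up to swapping, and since $d(u,v)$ is bounded below there by the positive distance between the two balls, a convergent subsequence delivers a pair of strong norm attainment.

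The main obstacle is $(iii) \Rightarrow (ii)$, which requires adapting Lemma~\ref{lemma:nolocalstrexp} to the locally compact setting. Given $f \in B$ with associated data $(\eta, x, y, r)$, the continuity of $(u, v) \mapsto \|\hat f(m_{u,v})\|$ on the compact set $B(x, r) \times B(y, r)$ produces a non-empty compact set of norm-attaining pairs; among them I select $(p, q)$ minimising $d(p, q)$. I claim $(p, q)$ fails property $(Z)$, so that $m_{p,q}$ is strongly exposed by \cite[Theorem 5.4]{gprdauga}. Otherwise there would exist $z_n \in M \setminus \{p, q\}$ with $(p, q)_{z_n} < \tfrac{1}{n} \min\{d(p, z_n), d(q, z_n)\}$, and after passing to a subsequence with $d(p, z_n) \leq d(q, z_n)$ the triangle-type estimate of Lemma~\ref{lemma:nolocalstrexp} yields $\|\hat f(m_{p, z_n})\| \to \|f\|_L$. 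Then eventually $\|\hat f(m_{p, z_n})\| > \|f\|_L - \eta$, forcing $(p, z_n) \in (B(x, r) \times B(y, r)) \cup (B(y, r) \times B(x, r))$; since $p \in B(x, r)$ and the two balls are disjoint, $z_n \in B(y, r)$ for all large $n$. Compactness of $B(y, r)$ extracts a further subsequence $z_n \to z \in B(y, r)$; then $z \neq p$ (since $p \in B(x,r)$ and $B(x,r) \cap B(y,r) = \emptyset$), $d(p, z) + d(z, q) = d(p, q)$, and $d(p, z) \leq \tfrac 12 d(p, q) < d(p, q)$, contradicting the minimality of $d(p, q)$. Hence $\hat f$ attains its norm at the strongly exposed molecule $m_{p, q}$, and Proposition~\ref{prop:strongoperator} provides an absolutely strongly exposing operator arbitrarily close to $\hat f$. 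Since $B$ is dense, (ii) follows.

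Finally, the ``in particular'' claim follows directly from (ii) by a tensor-product trick. Given $\varphi^* \in \mathcal F(M)^*$ with $\|\varphi^*\| = 1$ and $\varepsilon > 0$, fix $y_0 \in S_Y$ and consider the rank-one operator $T := \varphi^* \otimes y_0 \in \mathcal L(\mathcal F(M), Y)$ defined by $T(u) = \varphi^*(u) y_0$, which has norm $1$. By (ii) there is an absolutely strongly exposing $S$ with $\|T - S\| < \varepsilon/2$; such $S$ attains its norm at a strongly exposed point $m_0 \in B_{\mathcal F(M)}$, and
$$
|\varphi^*(m_0)| = \|T(m_0)\| \geq \|S(m_0)\| - \|T - S\| = \|S\| - \|T - S\| \geq \|T\| - 2\|T - S\| > 1 - \varepsilon.
$$
Hence $\sup_{m \in \strexp{B_{\mathcal F(M)}}} |\varphi^*(m)| = \|\varphi^*\|$ for every $\varphi^* \in \mathcal F(M)^*$, and the Hahn--Banach theorem gives $B_{\mathcal F(M)} = \overline{\co}\bigl(\strexp{B_{\mathcal F(M)}}\bigr)$.
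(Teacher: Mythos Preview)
Your argument is correct and follows the same overall architecture as the paper: the same cycle $(ii)\Rightarrow(i)\Rightarrow(iii)\Rightarrow(ii)$, the same perturbation $\phi=f+\varepsilon\,h\otimes z$ from Lemma~\ref{lemma:approxnoloc} for $(i)\Rightarrow(iii)$, and Proposition~\ref{prop:strongoperator} to pass from norm-attainment at a strongly exposed molecule to density of absolutely strongly exposing operators. The one substantive difference is in how you show that every $f\in B$ attains its norm at a strongly exposed molecule. The paper takes \emph{any} norm-attaining pair $(u,v)\in B(x,r)\times B(y,r)$ and invokes \cite[Lemma~3.7]{ccgmr} to see that a property-$(Z)$ witness $z_n$ would satisfy both $\|\hat f(m_{u,z_n})\|>\|f\|_L-\eta$ and $\|\hat f(m_{z_n,v})\|>\|f\|_L-\eta$, forcing $z_n\in B(x,r)\cap B(y,r)=\emptyset$; no minimisation or subsequence is needed. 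You instead select a pair $(p,q)$ of minimal distance and rerun the Lemma~\ref{lemma:nolocalstrexp} argument, using compactness of $B(y,r)$ to extract a limit $z\in[p,q]$ and contradict minimality; this is slightly longer but self-contained, avoiding the external reference. One small slip in $(i)\Rightarrow(iii)$: the threshold should read $c=1+\varepsilon(1-\beta)$ (not $c>1+\varepsilon(1-\beta)$), so that $\eta:=\|\phi\|_L-c>0$ gives exactly the defining inequality of $B$.
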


In particular, for locally compact metric spaces whose Lipschitz-free space has the RNP, the proposition above gives the following corollary, which extends Corollary \ref{coro:RNP-SNA-open}, since the set of absolutely strongly exposing operators from $\mathcal F(M)$ to $Y$ is dense in $\mathcal{L}(\F{M},Y)$ by \cite[Theorem~5]{bou77}.

\begin{corollary}\label{coro:RNPabiden}
Let $M$ be a locally compact metric space for which $\mathcal F(M)$ has the RNP and let $Y$ be a Banach space. Then,  $\SA(M,Y)$ (and so $\NA(\F{M},Y)$) contains an open dense set.
\end{corollary}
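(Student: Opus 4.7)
The plan is to obtain this corollary as a direct plug-in of Bourgain's classical result \cite[Theorem~5]{bou77} into the equivalence already proved in Proposition \ref{prop:denseopen-locallycompact}. There is essentially no new work: the difficulty has been isolated in the preceding proposition.

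More precisely, I would argue as follows. Since $\mathcal F(M)$ is, by hypothesis, a Banach space with the RNP, Bourgain's theorem yields that the set of absolutely strongly exposing operators from $\mathcal F(M)$ to $Y$ is a (dense, indeed $G_\delta$-dense) subset of $\mathcal{L}(\mathcal F(M), Y)$. This is precisely condition (ii) in the statement of Proposition \ref{prop:denseopen-locallycompact}. Because $M$ is assumed locally compact, that proposition is available, and the implication (ii)$\Rightarrow$(iii) immediately produces an open dense set $B \subseteq \SA(M, Y)$ of the explicit form described there (controlled by a pair of disjoint compact balls $B(x, r)$ and $B(y, r)$ and a gap $\eta > 0$).

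Transferring the conclusion to $\NA(\mathcal F(M), Y)$ is then immediate from the canonical isometric identification $\Lip(M, Y) \equiv \mathcal{L}(\mathcal F(M), Y)$, under which $\SA(M, Y)$ embeds into $\NA(\mathcal F(M), Y)$; the image of $B$ is the required open dense set in $\NA(\mathcal F(M), Y)$.

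The only ``obstacle'' worth mentioning is purely conceptual: one must trust that Proposition \ref{prop:denseopen-locallycompact} genuinely covers the locally compact (non-compact) setting, since the preceding openness arguments (Lemma \ref{lemma:approxnoloc} and Lemma \ref{lemma:nolocalstrexp}) were stated for compact $M$. But the localisation to disjoint compact balls in the definition of $B$ is exactly the device that allows those compactness arguments to be run locally, and this has already been handled inside Proposition \ref{prop:denseopen-locallycompact}; hence nothing further needs to be proved here.
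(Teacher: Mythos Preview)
Your proof is correct and matches the paper's own argument exactly: invoke Bourgain's theorem \cite[Theorem~5]{bou77} to obtain condition (ii) of Proposition~\ref{prop:denseopen-locallycompact}, then apply (ii)$\Rightarrow$(iii) to produce the open dense set $B\subseteq \SA(M,Y)$. The paper presents this in a single sentence preceding the corollary, so your write-up is if anything more detailed than necessary.
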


Observe that this applies to the main examples in the literature of metric spaces $M$ for which it is known that $\mathcal F(M)$ has the RNP, as the class of uniformly discrete metric spaces or the class of boundedly compact H\"older metric spaces.

\begin{proof}[Proof of Proposition \ref{prop:denseopen-locallycompact}]
Assume that
$$
A:=\bigl\{f\in \Lip(M,Y)\colon \hat{f}\text{ attains its norm at a denting point}\bigr\}
$$
is dense in $\Lip(M,Y)$. Pick $f\in A$ with $\Vert f\Vert_L=1$ and find a denting point $m_{x,y}\in \mathcal F(M)$ and an element $y^*\in S_{Y^*}$ such that $[y^*\circ \hat{f}](m_{x,y})=1$. Fix $0<\delta<\frac12$ and find a slice $S=S(B_{\mathcal F(M)},\hat{h},\beta)$ containing $m_{x,y}$ and such that $\diam(S)<\delta$. Select $z\in S_Y$ such that $y^*(z)\hat{h}(m_{x,y})>1-\beta$ and define
    $$\hat{\phi}:=\hat{f}+\varepsilon \hat{h}\otimes z.$$
    It is clear that $\Vert f-\phi\Vert_L\leq\varepsilon$. Also, the proof of Lemma \ref{lemma:approxnoloc} reveals that, given $u,v\in M, u\neq v$ then
    \begin{multline*}
    \Vert \hat{\phi}(m_{u,v})\Vert>1+\varepsilon(1-\beta)\\ \Longrightarrow (u,v)\in (B(x,\delta d(x,y))\times B(y,\delta d(x,y)))\cup (B(y,\delta d(x,y))\times B(x,\delta d(x,y))).
    \end{multline*}
Taking into account that $B(x,\delta d(x,y))$ and $B(y,\delta d(x,y))$ are compact and disjoint for a small enough $\delta$, we derive that $\phi\in B$. This proves that the set $B$ is dense. To get (iii) let us prove that $B$ enjoys the following properties:
\begin{enumerate}
\item {\slshape $B$ is open.}\ Indeed, given a map $f\in B$, consider $\eta>0$, $x,y\in M$ with $x\neq y$ and $r>0$ for which $B(x,r)$ and $B(y,r)$ are compact and disjoint, and $\Vert \hat{f}(m_{u,v})\Vert\leq \norm{f}-\eta$ if $(u,v)\notin (B(x,r)\times B(y,r))\cup (B(y,r)\times B(x,r))$. Pick $0<\delta<\frac{\eta}{2}$ and let us prove that $B(f,\delta)\subseteq B$. To this end take $g\in \Lip(M,Y)$ with $\Vert f-g\Vert_L<\delta$. Now, if $(u,v)\notin (B(x,r)\times B(y,r))\cup (B(y,r)\times B(x,r))$ then $\Vert \hat{f}(m_{u,v})\Vert\leq \Vert f\Vert_L-\eta$, from where
$$\Vert \hat{g}(m_{u,v})\Vert\leq \delta+\Vert \hat{f}(m_{u,v})\Vert\leq \delta+\Vert f\Vert_L-\eta\leq \Vert g\Vert_L+2\delta-\eta,$$
which proves that $g\in B$, as desired.
\item {\slshape Every map in $B$ attains its norm at a strongly exposed point.} To see this, take $f\in B$ and, by definition, consider $\eta>0$, $x,y\in M$ with $x\neq y$ and $r>0$ for which $B(x,r)$ and $B(y,r)$ are compact and disjoint, and $\Vert \hat{f}(m_{u,v})\Vert\leq \Vert f\Vert_L-\eta$ if $(u,v)\notin (B(x,r)\times B(y,r))\cup (B(y,r)\times B(x,r))$. Notice that $f$ strongly attains its norm because the set $B(x,r)\cup B(y,r)$ is a compact set and from the fact that $f$ cannot approximate its norm at arbitrarily close points. The previous fact even provides a pair of different points $u\in B(x,r)$ and $v\in B(y,r)$ with the property that $\Vert \hat{f}(m_{u,v})\Vert=\Vert f\Vert_L$. The pair $(u,v)$ fails property (Z). Indeed, otherwise there would be a sequence $\{z_n\}$ such that $\frac{(u,v)_{z_n}}{\min\{d(u,z_n), d(v,z_n)\}}\to 0$ and so, by \cite[Lemma~3.7]{ccgmr}, it would follow that $z_n\in B(x,r)\cap B(y,r)=\emptyset$ for large $n$, a contradiction. Equivalently, the molecule $m_{u,v}$ is strongly exposed point. This proves that (iii) implies (ii) by Proposition \ref{prop:strongoperator}.
\end{enumerate}
Now the previous two facts prove that (i) implies (iii). Finally, (ii) implies (i) is trivial, which finishes the proof.
\end{proof}

Apart from Corollary \ref{coro:RNPabiden}, Proposition \ref{prop:denseopen-locallycompact} also applies to another large class of metric spaces.

\begin{example}
{\slshape Let $M$ be a locally compact metric space, let $0<\theta<1$, and consider $M^\theta:=(M,d^\theta)$. Then, $\SA(M^\theta,Y)$ contains an open dense subset.}\ Indeed, $M^\theta$ is locally compact and $\SA(M^\theta,Y)$ is dense in $\Lip(M^\theta,Y)$ for every Banach space $Y$ by \cite[Proposition~3.8]{cm}; now, Proposition \ref{prop:denseopen-locallycompact} applies since every molecule of $\mathcal F(M^\theta)$ is a strongly exposed point \cite[Proposition~3.8]{cm}.
\end{example}

Let us end by giving a generalisation of Theorem~\ref{theorem:clcvstrepcompact}.

\begin{corollary}\label{coro:clcvstrepbouncompact}
Let $M$ be a boundedly compact metric space. If $\SA(M,\R)$ is dense in $\Lip(M,\R)$, then
		$$
B_{\F{M}}=\overline{\co}\bigl(\strexp{B_{\mathcal F(M)}}\bigr).
		$$
\end{corollary}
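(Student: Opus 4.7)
The plan is to adapt the argument of Theorem \ref{theorem:clcvstrepcompact} to the boundedly compact setting, exploiting the observation that the slice-based perturbation used in Lemma \ref{lemma:approxnoloc} automatically confines the near-maximisers of the perturbed function to a pair of compact balls — precisely what is needed to salvage the compactness step of Lemma \ref{lemma:nolocalstrexp} when $M$ is only boundedly compact. I would set $\Gamma := \strexp{B_{\F{M}}}$ and apply Lemma \ref{lemma:gamma} (valid for any complete metric space), reducing the proof to checking that every $f \in \Lip(M,\R)$ with $\|f\|_L = 1$ and $\hat f(m_{x,y}) = 1$ for some $m_{x,y} \in \Mol{M} \cap \ext{B_{\F{M}}}$ satisfies $\sup_{m \in \Gamma}|\hat f(m)| = 1$.

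Given such an $f$ and $\varepsilon > 0$, I would repeat the construction of Lemma \ref{lemma:approxnoloc}, which works verbatim in the complete setting because its only nontrivial input is that extreme molecules are denting points: choose a small $\delta \in (0, 1/2)$, a slice $S(B_{\F{M}}, \hat h, \beta)$ of diameter less than $\delta$ around $m_{x,y}$, and define $\phi := f + \varepsilon h$. The estimate in Lemma \ref{lemma:approxnoloc} (via \cite[Lemma~1.3]{ccgmr}) then shows that every pair $(u,v)$ with $|\hat\phi(m_{u,v})| > 1 + \varepsilon(1-\beta)$ satisfies $\max\{d(x,u), d(y,v)\} < \delta d(x,y)$ (up to swapping $u$ and $v$). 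For $\delta < 1/2$, the closed balls $B(x, \delta d(x,y))$ and $B(y, \delta d(x,y))$ are disjoint and, by bounded compactness of $M$, compact; hence all near-maximisers of $\hat\phi$ live in a compact subset of $M \times M$ away from the diagonal.

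Next I would run the argument of Lemma \ref{lemma:nolocalstrexp} on $\phi$: continuity of $(u,v) \mapsto \hat\phi(m_{u,v})$ off the diagonal, combined with compactness of the localising set, yields a norm-attaining pair $(x',y')$ that minimises distance among such pairs (the minimum being strictly positive because $\phi$ is non-local by the localisation). The dichotomy in the proof of Lemma \ref{lemma:nolocalstrexp} then forces $(x',y')$ to fail property (Z): the witness points $z_n$ remain bounded by the property (Z) inequality itself, so a convergent subsequence $z_n \to z$ exists, and the three cases $z \in [x',y'] \setminus \{x',y'\}$, $z = x'$, or $z = y'$ respectively contradict either the minimality of $d(x',y')$ or the non-locality of $\phi$. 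Therefore $m_{x',y'} \in \Gamma$ by \cite[Theorem~5.4]{gprdauga}, giving $|\hat f(m_{x',y'})| \geq \|\phi\|_L - \varepsilon > 1 - 2\varepsilon$; letting $\varepsilon \to 0$ and invoking Lemma \ref{lemma:gamma} yields $B_{\F{M}} = \overline{\co}(\Gamma)$. The main obstacle is justifying the compactness-based minimum-distance step of Lemma \ref{lemma:nolocalstrexp} when $M$ is not globally compact (where near-maximisers could, in principle, escape to infinity); this is precisely what the localisation of the slice perturbation resolves, by trapping the relevant pairs inside a bounded, hence compact, subset of $M$.
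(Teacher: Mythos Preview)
Your approach is essentially the same as the paper's: set $\Gamma=\strexp{B_{\F{M}}}$, verify the hypothesis of Lemma~\ref{lemma:gamma} by showing that any $f$ attaining its norm at an extreme molecule can be approximated by maps attaining their norm on $\Gamma$, via the slice-perturbation of Lemma~\ref{lemma:approxnoloc} followed by the property-(Z) argument of Lemma~\ref{lemma:nolocalstrexp}. The paper packages the second half by invoking the proof of Proposition~\ref{prop:denseopen-locallycompact}; you spell it out directly, which is fine.

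There is one step you pass over too quickly. You write that the construction of Lemma~\ref{lemma:approxnoloc} ``works verbatim in the complete setting because its only nontrivial input is that extreme molecules are denting points'' --- but you never justify \emph{why} this input is available here. In Lemma~\ref{lemma:approxnoloc} the denting property is obtained from \cite[Theorem~4.2]{ag} (extreme implies preserved extreme) together with \cite[Theorem~2.4]{lppr} (preserved extreme implies denting), and the first of these is stated for compact $M$. The paper's proof explicitly notes that for boundedly compact $M$ one must adapt the \emph{techniques} of those results to conclude that the extreme molecule $m_{x,y}$ is denting; this is where bounded compactness is genuinely used, and it is not a general fact about complete metric spaces. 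You should insert that justification (or at least the citation) before invoking the slice. Once that is in place, the rest of your argument --- the localisation of near-maximisers to a pair of disjoint compact balls, the minimum-distance norm-attaining pair, and the boundedness of the $z_n$ from the Gromov-product inequality --- is correct and matches what the paper does inside Proposition~\ref{prop:denseopen-locallycompact}.
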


\begin{proof} As in Theorem \ref{theorem:clcvstrepcompact}, let $\Gamma = \strexp{B_{\mathcal F(M)}}$ and suppose $f\in \Lip(M,\R)$ is such that $\hat{f}$ attains its norm at an element $m_{p,q} \in\ext{B_{\mathcal F(M)}}$. Since $M$ is a boundedly compact metric space, by using the techniques involved in the proof of Theorem 4.2 in \cite{ag} and Theorem 2.4 in \cite{lppr}, we obtain that $m_{p,q}$ is a denting point of $B_{\mathcal F(M)}$. Now, it follows from the proof of Proposition \ref{prop:denseopen-locallycompact} that $\hat{f}$ can be approximated by elements of $\mathcal{L}(\mathcal{F}(M),\R)$ attaining their norms on $\Gamma$. Therefore, $\sup_{m\in\Gamma} |\hat{f}(m)|=\norm{f}_L$. Now, Lemma \ref{lemma:gamma} does the work.
\end{proof}

\section{A by-product}\label{sect:byproduct}
We finish the paper with a by-product of the preliminary results used in the proof of Theorem \ref{theo:centraestruct}. We do not know whether Lindenstrauss property A of a Banach space $X$ implies that the set
\begin{equation}\tag{\ensuremath{\clubsuit}}\label{eq:abs-stron-exposing-Remark}
\{T\in \mathcal{L}(X,Y)\colon  T\text{ is absolutely strongly exposing operator}\}
\end{equation}
is dense in $\mathcal{L}(X,Y)$ for every Banach space $Y$. But we would like to check what happens if we actually have one of the known properties which imply Lindenstrauss property A: the RNP, property $\alpha$, property quasi-$\alpha$, and having a norming uniformly strongly exposed set. It is known that the answer is positive if $X$ has the RNP by \cite[Theorem 5]{bou77}. Also, the same follows if $X$ has property $\alpha$ by an easy inspection of the proof of \cite[Proposition 1.3a]{schachermayer}. But this is not clear from the proof of the result for property quasi-$\alpha$ and when the space has a norming uniformly strongly exposed set. However, in the first case it is shown in \cite[Proposition 2.1]{ChoiSong} that if $X$ has property quasi-$\alpha$ then, for every Banach space $Y$, the set of operators attaining the norm at a strongly exposed point is dense in $\mathcal{L}(X,Y)$ (see also the comments after Definition~3.18 in \cite{ccgmr}). Therefore, by Proposition \ref{prop:strongoperator} we get that the set in \eqref{eq:abs-stron-exposing-Remark} is dense. In the case when $X$ is a Banach space with a uniformly strongly exposed set $S\subset S_X$ such that $B_X=\overline{\co}(S)$, it is shown in (the proof of) \cite[Proposition~1]{linds63} that the set
$$
\left\{T\in \mathcal{L}(X,Y)\colon T\text{ attains its norm at a point of }\overline{S}\right\}
$$
is dense in $\mathcal{L}(X,Y)$ for every Banach space $Y$. Then, the density of the set in \eqref{eq:abs-stron-exposing-Remark} follows again from Proposition \ref{prop:strongoperator} and the fact that $\overline{S}$ is a uniformly strongly exposed set too:

\begin{fact}\label{fact:cufeabsostronglyope}
Let $X$ be a Banach space and assume that $S\subset S_X$ is a uniformly strongly exposed set. Then, $\overline{S}$ is also a uniformly strongly exposed set.
\end{fact}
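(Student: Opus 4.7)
The plan is to extend the family of exposing functionals $\{h_x\}_{x\in S}$ to a family $\{h_y\}_{y\in\overline{S}}$ via a weak-$*$ limiting procedure, and then verify that the $\varepsilon$--$\delta$ condition is preserved.

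For $y\in S$, I would simply keep the original functional $h_y$. For each $y\in\overline{S}\setminus S$, I would fix a sequence $\{x_n^y\}\subseteq S$ with $x_n^y\to y$ in norm. Since $\{h_{x_n^y}\}\subseteq B_{X^*}$, the Banach--Alaoglu theorem gives a subnet $\bigl\{h_{x_{n_\alpha}^y}\bigr\}$ converging weak-$*$ to some $h_y\in B_{X^*}$. Writing
\[
h_y(y)=\lim_\alpha h_{x_{n_\alpha}^y}(y)=\lim_\alpha\Bigl(h_{x_{n_\alpha}^y}(x_{n_\alpha}^y)+h_{x_{n_\alpha}^y}\bigl(y-x_{n_\alpha}^y\bigr)\Bigr)=1,
\]
since $\bigl\|h_{x_{n_\alpha}^y}\bigr\|=1$ and $\bigl\|y-x_{n_\alpha}^y\bigr\|\to 0$, one gets $\|h_y\|=h_y(y)=1$. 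Thus $h_y$ is a valid norming functional for $y$.

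Next I would check the uniform strong exposing property. Fix $\varepsilon>0$. By hypothesis, there is $\delta'>0$ such that for every $x\in S$ and every $z\in B_X$ with $h_x(z)>1-\delta'$, one has $\|x-z\|<\varepsilon/2$. Set $\delta:=\delta'/2$ and suppose $y\in\overline{S}$, $z\in B_X$ satisfy $h_y(z)>1-\delta$. If $y\in S$, the conclusion $\|y-z\|<\varepsilon/2<\varepsilon$ is immediate from the original hypothesis. Otherwise $y\in\overline{S}\setminus S$, and I would use the subnet $\bigl\{x_{n_\alpha}^y\bigr\}$ twice: first restrict to the cofinal subnet on which $\bigl\|x_{n_\alpha}^y-y\bigr\|<\varepsilon/2$ (possible because $x_n^y\to y$), and then invoke weak-$*$ convergence to produce within it an index $\alpha$ with $h_{x_{n_\alpha}^y}(z)>1-\delta'$. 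Applying uniform exposure of $S$ yields $\bigl\|x_{n_\alpha}^y-z\bigr\|<\varepsilon/2$, and the triangle inequality gives $\|y-z\|<\varepsilon$.

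The only mildly delicate point is this simultaneous choice of index $\alpha$, but it is a routine consequence of the cofinality of subnets and weak-$*$ convergence. No step presents a genuine obstacle.
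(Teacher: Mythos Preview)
Your proof is correct and follows essentially the same approach as the paper's: extend the exposing functionals to $\overline{S}$ by taking weak-$*$ cluster points of $\{h_x\}_{x\in S}$, and then verify the uniform $\varepsilon$--$\delta$ condition via the triangle inequality with an intermediate point in $S$. The only cosmetic difference is that the paper works directly with the weak-$*$ closure of $\{h_x : x\in S\}$ and, for a given $y\in\overline{S}$ and $z$ with $h_y(z)>1-\delta$, picks $x\in S$ with both $h_x(z)>1-\delta$ and $h_x(y)>1-\delta$, obtaining \emph{both} distance bounds $\|x-z\|<\varepsilon/2$ and $\|x-y\|<\varepsilon/2$ from the uniform strong exposure of $S$; you instead get the bound $\|x_{n_\alpha}^y-y\|<\varepsilon/2$ directly from norm convergence of your chosen sequence, which is why you introduce the extra factor $\delta=\delta'/2$.
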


\begin{proof}
For every $y\in S$, let $f_y$ be the corresponding strongly exposing functional. Clearly, given $x\in \overline{S}\setminus S$, we can find a functional $f_x$ in the weak-star closure of $\{f_y \colon y\in S\}$ such that $f_x(x)=1$. Let us now prove that $\overline{S}$ is uniformly strongly exposed. To this end, for $\varepsilon>0$ we find $\delta>0$ such that
    $$
    f_y(x)>1-\delta\ \ \Longrightarrow\ \ \Vert y-x\Vert<\frac{\varepsilon}{2}
    $$
for every $y\in S$. Now, given $x\in \overline{S}$ and $z\in S_X$ such that $f_x(z)>1-\delta$, we find $y\in S$ such that $f_y(z)>1-\delta$ and $f_y(x)>1-\delta$ (recall that $f_x$ belongs to the weak-star closure of $\{f_y\colon  y\in S\}$). The property defining $\delta$ implies that $\Vert y-z\Vert<\frac{\varepsilon}{2}$ and $\Vert x-y\Vert<\frac{\varepsilon}{2}$, from where $\Vert x-y\Vert<\varepsilon$.
\end{proof}

Let us state what has been proved so far.

\begin{proposition}\label{prop-byproduct}
Let $X$ be a Banach space satisfying one of the following properties:
\begin{enumerate}
\item[(a)] RNP,
\item[(b)] property $\alpha$,
\item[(c)] property quasi-$\alpha$,
\item[(d)] having a norming uniformly strongly exposed subset.
\end{enumerate}
Then, the set
$$
\{T\in \mathcal{L}(X,Y)\colon  T\text{ is absolutely strongly exposing operator}\}
$$
is dense in $\mathcal{L}(X,Y)$ for every Banach space $Y$.
\end{proposition}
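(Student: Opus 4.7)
The proposition is a catalogue-style result, and the plan is to handle each of the four sufficient conditions (a)--(d) by invoking the appropriate classical density result and then converting ``operators attaining their norm at a strongly exposed point'' into ``absolutely strongly exposing operators'' via Proposition~\ref{prop:strongoperator}. No single case is really the bottleneck; the technical work is already encapsulated in Proposition~\ref{prop:strongoperator} and Fact~\ref{fact:cufeabsostronglyope}.

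For (a), there is nothing to do beyond citing Bourgain's theorem \cite[Theorem~5]{bou77}, which directly asserts the denseness of absolutely strongly exposing operators whenever $X$ has the RNP. For (b), a look at the proof of \cite[Proposition~1.3.a]{schachermayer} shows that the operators Schachermayer constructs to approximate a given $T\in \mathcal{L}(X,Y)$ are in fact absolutely strongly exposing (they attain their norm at an element $x_{\lambda}$ of the $\alpha$-set, and the $\rho$-separation forces any norm-maximising sequence to cluster at $\pm x_\lambda$). So in this case the density follows by a direct inspection.

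For (c), I would combine two earlier results. First, \cite[Proposition~2.1]{ChoiSong} (see also the comments following \cite[Definition~3.18]{ccgmr}) gives that when $X$ has property quasi-$\alpha$, the set of operators attaining their norm at a strongly exposed point of $B_X$ is dense in $\mathcal{L}(X,Y)$ for every $Y$. Then, Proposition~\ref{prop:strongoperator} upgrades this denseness to the denseness of absolutely strongly exposing operators, which is exactly what we need.

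For (d), suppose $S\subset S_X$ is a uniformly strongly exposed set with $\overline{\co}(S)=B_X$. By \cite[Proposition~1]{linds63} (or rather its proof), for every Banach space $Y$, the set of operators in $\mathcal{L}(X,Y)$ attaining their norm at a point of $\overline{S}$ is dense in $\mathcal{L}(X,Y)$. Applying Fact~\ref{fact:cufeabsostronglyope}, $\overline{S}$ is itself a uniformly strongly exposed set, so in particular every element of $\overline{S}$ is a strongly exposed point of $B_X$. Thus each of the approximating operators attains its norm at a strongly exposed point, and Proposition~\ref{prop:strongoperator} again converts this into denseness of absolutely strongly exposing operators, completing the argument.
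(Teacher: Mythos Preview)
Your proposal is correct and follows essentially the same approach as the paper: case (a) is Bourgain's theorem, case (b) is a direct inspection of Schachermayer's construction, and cases (c) and (d) are handled by first obtaining density of operators attaining their norm at strongly exposed points (via \cite[Proposition~2.1]{ChoiSong} and \cite[Proposition~1]{linds63} together with Fact~\ref{fact:cufeabsostronglyope}, respectively) and then applying Proposition~\ref{prop:strongoperator}.
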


\section*{Acknowledgment} The authors are deeply grateful to F.~Nazarov for suggesting the set $C$ used in Lemma~\ref{lema:nazarov}. They also thank Antonio Avil\'es, Gilles Godefroy, and Rafael Pay\'{a} for kindly answering several inquiries related to the topic of the paper, and thank Ram\'on Aliaga for some interesting remarks on the first version of the manuscript which resulted in an improved exposition. They also thank the anonymous referee for the careful reading of the paper and valuable suggestions.

\end{document}